\documentclass[11pt]{amsart}
\usepackage[margin=1.2in]{geometry}
\usepackage{amsmath}
\usepackage{color}
\usepackage{amsthm}
\usepackage{amsfonts}
\usepackage{amssymb}
\usepackage{amscd}
\numberwithin{equation}{section}
\newtheorem{theorem}{Theorem}[section]
\newtheorem{cor}[theorem]{Corollary}
\newtheorem{proposition}[theorem]{Proposition}
\newtheorem{lemma}[theorem]{Lemma}
\newtheorem{prop}[theorem]{Proposition}
\theoremstyle{definition}
\newtheorem{definition}[theorem]{Definition}
\newtheorem{remark}[theorem]{Remark}
\newtheorem{example}[theorem]{Example}

\newcommand\half{\frac{1}{2}}

\DeclareMathOperator{\ad}{ad}

\newcommand\be{\beta}

\newcommand\g{\mathfrak g}

\newcommand\h{\mathfrak h}

\newcommand\D{\Delta}
\renewcommand\l{\lambda}

\renewcommand\d{\delta}

\renewcommand\a{\alpha}

\renewcommand\k{\mathfrak k}

\newcommand{\Z}{\mathbb Z}
\newcommand\nat{\mathbb N}
\newcommand\ganz{\mathbb Z}

\newcommand\s{\sigma}

\newcommand\C{\mathbb C}
\newcommand\R{\mathbb R}

\newcommand\G{\Gamma}

\newcommand{\fg}{\mathfrak{g}}

\newcommand{\ZZ}{\mathbb{Z}}

\newcommand{\sdim}{\text{\rm sdim}}

\newcommand{\Cur}{\mbox{Cur}\,}
\newcommand{\End}{\mbox{End}}

\newcommand{\Hom}{\mbox{Hom}}

\newcommand{\Res}{\mbox{Res}}

\newcommand{\vac}{{\bf 1}}
\newcommand{\bea}{\begin{eqnarray}}
\newcommand{\eea}{\end{eqnarray}}
\begin{document}
\title{Invariant Hermitian forms on vertex algebras}
\author[Victor~G. Kac, Pierluigi M\"oseneder Frajria,  Paolo  Papi]{Victor~G. Kac\\Pierluigi M\"oseneder Frajria\\Paolo  Papi}
\begin{abstract} We study invariant Hermitian forms on a conformal vertex algebra and on their (twisted) modules. We establish existence of a non-zero invariant Hermitian form on an arbitrary $W$--algebra. We show that for a minimal simple $W$--algebra $W_k(\g,\theta/2)$ this form can be unitary only when its $\tfrac{1}{2}\Z$--grading is compatible with parity, unless $W_k(\g,\theta/2)$ ``collapses'' to its affine subalgebra.
\end{abstract}
\maketitle
\section{Introduction}
In the present paper we study invariant Hermitian forms on a conformal vertex algebra $V$  and its  (possibly twisted) positive energy modules. By a conformal vertex algebra we mean a vector superspace $V$ over $\C$, endowed with a structure of a vertex algebra (with state--field correspondence $a\mapsto Y(a,z)$), and a Virasoro vector $L$ such that the eigenvalues of $L_0$ lie in $\tfrac{1}{2}\Z_+$, all eigenspaces are finite--dimensional, and the $0$--th eigenspace consists of multiples of the vacuum vector (cf. Definition 1.1 in Section \ref{Section1} and \cite{KB}).

Let $\phi$ be a conjugate linear involution of $V$. A Hermitian form $(\, \cdot \,\, , \, \cdot\, )$ on $V$ is called $\phi$--{\it invariant} if, for all $a\in V$, one has 
\begin{equation}\label{111}
(v,Y(a,z)u)=(Y(A(z)a,z^{-1})v,u),\quad u,v\in V.
\end{equation}
Here $A(z):V\to V((z))$ is defined by 
\begin{equation}\label{12}
A(z)=e^{zL_1}z^{-2L_0}g,
\end{equation}
where
\begin{equation}\label{113}
g(a)=e^{-\pi\sqrt{-1}(\tfrac{1}{2}p(a)+\D_a)}\phi(a),\quad a\in V,
\end{equation}
and $p(a)=0$ or $1$  stands for the parity of $a$ and $\D_a$ for its  $L_0$--eigenvalue. The definition of a $\phi$--invariant Hermitian form on a $V$--module
$M$  is similar (cf. Definition \ref{iM}).

The operator $A(z)$ with $g=(-1)^{L_0}$ appeared first in \cite{B} in the construction of the coadjoint module in the case when $V$ is purely even and the eigenvalues of $L_0$ are integers. Under the same assumptions on $V$ this operator was used in \cite{FLM} for the construction of the dual to the $V$--modules. 

Formula \eqref{12}  with $g=(-1)^{L_0}\phi$ was used in  \cite{DL}  to define unitary structures on vertex operator algebras and this notion was generalized in \cite{AiLin}
to  vertex algebras
with $\tfrac{1}{2}\Z_+$--grading compatible with parity, in which 
 case formula \eqref{113} simplifies to (see \eqref{gsuper})
$$g=(-1)^{L_0+2L_0^2}\phi.$$

As one can infer from the above remarks, the motivation for this definition stems from the observation that,  given a $V$--module $M$, one has (as in the Lie algebra case), a bijective correspondence between $\phi$--invariant Hermitian forms $(\cdot, \cdot)$ on $V$ and $V$--module conjugate linear
homomorphisms $\Theta:M\to M^\dagger$, where $M^\dagger$ is the  conjugate linear dual to $M$, with $V$--module structure defined by 
\begin{equation}\label{14}\langle Y_{M^\dagger}(a.z)m',m\rangle=\langle m', Y_M(A(z)a,z^{-1})m\rangle,\quad m\in M,m'\in M^\dagger.\end{equation}

Our first result, which generalizes  \cite[Theorem 5.2.1, Proposition 5.3.1]{FLM} (with a similar proof), is Proposition \ref{Contragredient}: formula \eqref{14} indeed defines a structure of a $V$--module on the restricted dual superspace $M^\dagger$ of $M$. Our second result, which generalizes, with the same proof,  that of \cite{Li}   in the symmetric case, is Proposition \ref{wt}, which describes $\phi$--invariant Hermitian forms on $V$. Its Corollary  \ref{hf} claims that a conformal vertex algebra $V$ with a  conjugate linear involution $\phi$ admits a (unique, up to a constant factor) 
 $\phi$--invariant Hermitian form if and only if any eigenvector of $L_0$ with eigenvalue $1$ is annihilated by $L_1$ (see also Remark \ref{V0dim1}). As usual, such a   Hermitian form can be expressed in terms of the expectation value on the vacuum (see formula \eqref{invform}).
 
 In Section \ref{eihf} we construct invariant Hermitian forms of fermionic, bosonic, affine and lattice vertex algebras. In  Section \ref{ihfm} we extend the results on invariant Hermitian forms on $V$ to arbitrary positive--energy (twisted) modules $M$. Proposition 5.3 claims that the space of $\phi$--invariant Hermitian forms on $M$ is isomorphic to the set 
 of $\omega$--invariant Hermitian forms on the module $M_0$ over the Zhu algebra. Here $M_0$ is the lowest energy subspace of $M$ and $\omega$ is the conjugate 
 linear anti--involution of the Zhu algebra, induced by the endomorphism of the superspace $V$ defined by 
 $$\omega(v)=A(1)v,\quad v\in V.$$
In Remark \ref{sr} we note that actually Proposition \ref{wt} is a special case of Proposition \ref{wtm}. 
 
In Section \ref{ihfw} we construct an invariant Hermitian form on the $W$--algebras  $W^k(\g,x,f)$ \cite{KRW}, \cite{KW1}. This construction is based on Proposition \ref{primary} (b), which says that the condition of Corollary \ref{hf}, that  all eigenvectors of $L_0$ with eigenvalue $1$ of the vertex algebra are  annihilated by $L_1$, holds, provided that the elements
 $h:=2x$ and $f$ can be included in a $sl(2)$-triple $\{e,f,h\}$. 
 
 In conclusion of this section we briefly discuss unitarity (i.e., positive semi-definiteness) of this Hermitian form for minimal $W$--algebras $W^k(\g,\theta/2)$. 
We show that the only interesting cases might occur when the $\tfrac{1}{2}\Z$--grading on the $W$--algebra is compatible with parity. In all the other cases we show that  the $W$--algebra  can be unitary only at collapsing levels \cite{AKMPP}, i.e. when the simple $W$-algebra $W_k(\g,\theta/2)$ ``collapses'' to its affine subalgebra: see Propositions \ref{7a}, \ref{7b}.
 These are just the first steps towards classification of unitary minimal $W$--algebras.
 
Throughout the paper the base field is $\C$. We also denote by $\Z_+$ the set of nonnegative integers and by $\nat$ the set of positive integers.

\section{Setup}\label{Section1}
\subsection{Basic definitions} Recall that a vector superspace  is a $\mathbb Z/2\Z$--graded vector space
 $V=V_{\bar{0}}\oplus V_{\bar{1}}$. The elements in $V_{\bar{0}}$ 
(resp. $V_{\bar{1}}$) are called even (resp. odd). Set
$$p(v)=\begin{cases}0\in\Z&\text{ if $v\in V_{\bar{0}}$},\\ 1 \in\Z&\text{ if $v\in V_{\bar{1}}$, }
\end{cases}
$$
i.e. we will regard  $p(v)$ as an integer, not as a residue class. We will often use the notation
\begin{equation}\label{s}\s(u)=(-1)^{p(u)}u,\qquad p(u,v)=(-1)^{p(u)p(v)}.\end{equation}

Let $V$ be a  vertex algebra.  We let 
 \begin{align}\label{0a3}
& Y:V \to (\mbox{End}\,V)[[z,z^{-1}]] ,\\
& v\mapsto Y(v,z)=\sum_{n\in{\Z}}v_{(n)}z^{-n-1}\ \ \ \  (v_{(n)}\in
\mbox{End}\,V),\notag
\end{align}
be the state--field correspondence. We denote by $\vac$ the vacuum vector in $V$ and by $T$ the translation operator (see e.g. \cite{KB} for details).

\begin{definition}\label{svoa}  In the present paper we will call a  vertex algebra $V$  {\it conformal}  if  there exists a distinguished vector $L\in
V_2$,  called a Virasoro vector, satisfying the following conditions:
\begin{align} \label{0a4}
& Y(L,z)=\sum_{n\in\Z}L_nz^{-n-2},\ [L_m,L_n]=(m-n)L_{m+n}+\frac{1}{12}(m^3-m)\delta_{m+n,0}c\,I ,\\
& L_{-1}=T,\\
& \text{$L_0$ is diagonalizable and its eigenspace decomposition has the form}\end{align}
\begin{equation}\label{g2.1}
V=\bigoplus_{n\in{ \frac{1}{2}\mathbb Z_+}}V_n,
\end{equation}
where 
\begin{equation}\label{dimf}\dim V_{n}< \infty \text{ for all $n$ and $V_0=\C\vac$}.
\end{equation}
The number $c$ is called the {\it central charge}.
\end{definition}
\begin{remark}\label{voas} 
Important examples of conformal vertex algebras are  vertex operator superalgebras, namely   the conformal vertex algebras  for which 
decomposition \eqref{g2.1} is compatible with parity, i.e.  $\sigma(u)=(-1)^{2L_0} u$.\par
In the definition of \cite{KB} of  conformal vertex algebras properties 
\eqref{g2.1} and \eqref{dimf} are not required. 
\end{remark}

By an automorphism of a conformal vertex algebra $V$ we mean a vertex algebra  automorphism  $\phi$ of $V$ (i. e. $\phi(u_{(n)}v)=\phi(u)_{(n)}\phi(v)$ for all $n\in\mathbb Z$)  with the  property that $\phi(L)=L$. Consequently, $\phi(V_n)=V_n$.
\par
The eigenvalues of $L_0$ on $V$ are called {\it conformal weights}; the conformal weight of $v\in V$ is denoted by $\D_v$, so that $v\in V_{\Delta_v}$.
The eigenvector $v$ of $L_0$ is called {\it quasiprimary} if $L_1v=0$ and {\it primary} if $L_nv=0$ for $n\ge 1$. One has for $v$ of conformal weight $\D_v$:
\begin{equation}
[L_\l v]=(L_{-1}+\D_v\l)v+\sum_{n\ge 2}\tfrac{\l^n}{n!}L_{n-1}v.
\end{equation}
Here and throughout  the paper we use the formalism of $\l$-brackets, which are defined by
$$
[u_\l v]= Res_z e^{z\l}Y(u,z)v,\quad u,v\in V.$$

Let $\Gamma$ be an  additive subgroup of $\R$ containing $\Z$.  If $\gamma\in \R$, denote by $[\gamma]$ its coset $\gamma+\Z$. 

\begin{definition}\label{def-gamma}Let $V$ be a conformal vertex algebra. A $\Gamma/\Z$--\emph{grading} on $V$
 is a map $\Upsilon:[\gamma]\mapsto V^{[\gamma]}\subseteq V$ such that $V$ decomposes as
 \begin{equation}
\label{eq:2.22}
  V\ =\ \bigoplus_{[\gamma] \in \Gamma /\Z}
V^{[\gamma]}
\end{equation}
and \eqref{eq:2.22} 
is a vertex algebra grading, compatible
with  $L_0$, i.e.
$$
V{}^{[\alpha]}{} _{(n)}V {}^{[\beta]}
\subseteq V{}^{[\alpha +\beta]} \ ,\ \ 
L_0(V{}^{[\gamma]}) \subseteq V{}^{[\gamma]}\ .
$$
\end{definition}

If $a\in V^{[\gamma]}$ then $[\gamma]$ is called the {\it degree} of $a$. Given a vector $a \in V$ of conformal weight $\Delta_a$ and 
degree $[\gamma]$, denote by $\epsilon_a$ the maximal
non--positive real number in the coset $[\gamma
-\Delta_a]$. This number has the following properties \cite{DK}:
\begin{equation}
  \label{eq:2.23}
  \epsilon_{\vac}=0 \, , \quad \epsilon_{Ta}=\epsilon_a \, , \quad
     \epsilon_{a_{(n)}b} = \epsilon_a + \epsilon_b +\chi (a,b)\, ,
\end{equation}
where $\chi (a,b)=1$ or $0$, depending on whether $\epsilon_a
+\epsilon_b \leq-1$ or not. 

Let $\gamma_a = \Delta_a
+\epsilon_a$ .  Then 
%
\begin{equation}
  \label{eq:2.24}
  \gamma_{\vac} = 0 \, , \quad \gamma_{Ta} = \gamma_a +1 \, , \quad
     \gamma_{a_{(n)}b}= \gamma_a + \gamma_b +\chi (a,b) -n-1\,.
\end{equation}
\subsection{Twisted modules}
\begin{definition}\label{modV}Let $\Gamma$ be an  additive subgroup of $\R$ containing $\Z$, and let $\Upsilon$ be a $\G/\Z$--grading on a conformal  vertex algebra $V$.
A $\Upsilon$--\emph{twisted module} for $V$ is a vector superspace $M$ and a parity preserving
linear map from $V$ to the space of $\End M$--valued 
$\Upsilon$--\emph{twisted quantum fields} 
$a \mapsto Y^M (a,z) = \sum_{m \in [\gamma_a]} a^M_{(m)}
z^{-m-1}$ (i.e. $a^M_{(m)} \in \End M$ and $a^M_{(m)} v=0$ for
each $v \in M$ and $m \gg 0$), such that the following properties hold:
\begin{gather}
  \vac _{(n)}^M = \delta_{n,-1} I_M \, ,\label{vacuumaxiom}\\
    \sum_{j \in \Z_+} \binom{m}{j} (a_{(n+j)}b)^M_{(m+k-j)}v\label{Borcherds}\\
    = \sum_{j \in \Z_+} (-1)^j \binom{n}{j} (a^M_{(m+n-j)}
       b^M_{(k+j)} -p(a,b)(-1)^n b^M_{(k+n-j)}a^M_{(m+j)}) v\, ,\notag
  \end{gather}
where $a \in V^{[\gamma_a]}$ ($\gamma_a\in \Gamma$), $m \in [\gamma_a]$, $n \in \Z$, $k \in [\gamma_b]$.
\end{definition}
The following Lemma is known; we prove it for completeness.
\begin{lemma} The Borcherds identity \eqref{Borcherds}  is equivalent to 
\begin{align}
&Res_{u}(i_{w,u}Y_M(Y(a,u)b,w)(w+u)^mu^nw^l)=\label{Resborcherds}\\
&Res_z(i_{z,w}Y_M(a,z)Y_M(b,w)z^m(z-w)^nw^l-p(a,b)i_{w,z}Y_M(b,w)Y_M(a,z)z^m(z-w)^nw^l)\notag
\end{align}
for all $n\in\Z$, $m\in[\gamma_a]$, $l\in[\gamma_b]$. As usual, $i_{x,y}$ means expanding in the domain $|x|>|y|$. 
\end{lemma}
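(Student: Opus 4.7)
The plan is to prove the equivalence by direct formal-series manipulation: one expands both sides of \eqref{Resborcherds} using the standard binomial series
$$
i_{w,u}(w+u)^m=\sum_{i\ge 0}\binom{m}{i}w^{m-i}u^i,\qquad i_{z,w}(z-w)^n=\sum_{j\ge 0}(-1)^j\binom{n}{j}z^{n-j}w^j,
$$
together with the reordering identity $i_{w,z}(z-w)^n=(-1)^n i_{w,z}(w-z)^n$, takes residues in $u$ or $z$, and then matches the coefficients of powers of $w$.

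On the left-hand side, writing $Y_M(Y(a,u)b,w)=\sum_{j\in\Z}Y_M(a_{(j)}b,w)u^{-j-1}$ and inserting the expansion of $(w+u)^m$, the residue in $u$ picks out the coefficient of $u^{-1}$ and produces $\sum_{i\ge 0}\binom{m}{i}\,w^{m+l-i}\,Y_M(a_{(n+i)}b,w)$. On the right-hand side the first summand, after expanding $(z-w)^n$ in the domain $|z|>|w|$ and computing $\Res_z$, becomes $\sum_{j\ge 0}(-1)^j\binom{n}{j}\,a^M_{(m+n-j)}\,Y_M(b,w)\,w^{l+j}$; the second, after expanding in the opposite domain, becomes $-p(a,b)(-1)^n\sum_{j\ge 0}(-1)^j\binom{n}{j}\,Y_M(b,w)\,a^M_{(m+j)}\,w^{l+n-j}$.

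At this point both sides are formal distributions in $w$ whose exponents all lie in a single coset of $\Z$ in $\R$ (they differ from integers by $\gamma_a+\gamma_b-m-l$, which is in $\Z$), so the equality as formal distributions is determined by matching every integer-shifted coefficient of $w$. I would then apply both sides to an arbitrary $v\in M$ and compute $\Res_w\bigl(w^{k-l}(\,\cdot\,)\bigr)v$ for each $k\in[\gamma_b]$: on the left this yields exactly $\sum_{j\ge 0}\binom{m}{j}(a_{(n+j)}b)^M_{(m+k-j)}v$, and on the right one recovers the corresponding sum in \eqref{Borcherds}. Since $k-l$ runs over all of $\Z$ as $k$ varies over $[\gamma_b]$, all coefficients of $w$ are captured, so \eqref{Resborcherds} is equivalent to the full family \eqref{Borcherds}. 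The calculation itself is routine; the main obstacle is the clean bookkeeping of indices and signs, especially the factor $(-1)^n$ in the second summand coming from the reordering of $(z-w)^n$ and the shift $k\mapsto k-l$ that lines up the mode index on the Borcherds side with the exponent of $w$ on the residue side.
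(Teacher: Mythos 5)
Your proposal is correct and follows essentially the same route as the paper: compute the residues in $u$ and $z$ via the binomial expansions, observe that all exponents of $w$ on both sides are integers, and match coefficients of powers of $w$ to recover \eqref{Borcherds} with $k=t+l$ ranging over $[\gamma_b]$. The paper's proof is just a terser version of the same calculation, recording only the resulting identity of formal distributions in $w$.
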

\begin{proof}
Computing the residues  we find that \eqref{Resborcherds} is equivalent to
\begin{align*}
&\sum_{t\in\Z,j\in\Z_+}\binom{m}{j}(a_{(j+n)}b)^M_{(t-j+m+l)}w^{-t-1}\\
&=\sum_{t\in\Z,j\in\Z_+}(-1)^j\binom{n}{j}\left(a^M_{(m+n-j)}b^M_{(t+j+l)}-p(a,b)(-1)^{n}b^M_{(t+n-j+l)}a^M_{(m+j)})\right)w^{-t-1}\notag.
\end{align*}
\end{proof}

Since $V^{[\gamma]}$ is $L_0$--invariant, we have its eigenspace decomposition $V^{[\gamma]}=\oplus_\D V_\D^{[\gamma]}$, and we will write for $v\in V_{\Delta_v}^{[\gamma]}$,
$$
Y_M(v,z)=\sum_{n\in [\gamma-\D_v]}v^M_nz^{-n-\D_v}.
$$


\begin{definition}\label{pe}
A $\Upsilon$--twisted $V$--module $M$ is called a \emph{positive
  energy}  $V$--module if $M$ has an
$\R$--grading $M =\oplus_{{j \geq 0}} M_j$ such that
\begin{equation}
  \label{eq:2.39}
  a^M_n M_j \subseteq M_{j -n}, \ a\in V_{\D_a} .
\end{equation}
The subspace ~$M_0$ is called the
\emph{minimal energy subspace}.  Then,
\begin{equation}
  \label{eq:2.40}
  a^M_n M_0 = 0 \hbox{  for  } n>0 \hbox{  and  }a^M_0 M_0
  \subseteq M_0 \, .
\end{equation}
\end{definition}
\subsection{Zhu algebras}
 Set 
 \begin{equation}\label{vupsilon} V_{\Upsilon}=span(a\in V\mid \epsilon_a=0).\end{equation}
 Define a subspace $J_{\Upsilon}$ of $V$ as the span of elements
\begin{equation}\label{twistedZhu}
\sum_{j\in \Z_+}\binom{\gamma_a}{j}a_{(-2+\chi(a,b)+j)}b=Res_zz^{-2+\chi(a,b)}Y((1+z)^{\gamma_a}a,z)b,
\end{equation}
with $\epsilon_a+\epsilon_b\in\Z$.

Let
$$
a*b=\sum_{j\in \Z_+}\binom{\gamma_a}{j}a_{(-1+j)}b,
$$
Then $J_{\Upsilon}$ is a two sided ideal in $V_{\Upsilon}$ with respect to the product $*$. The quotient $Zhu_{\Upsilon}(V)=V_{\Upsilon}/J_{\Upsilon}$ is an associative superalgebra with respect to the product $*$ (see \cite{DK} for a proof), which is called the 
{\it Zhu algebra} associated to the grading \eqref{eq:2.22}.

\begin{example}
If $\Gamma$ is the subgroup of $\R$ spanned by the conformal weights $\D_a$ then one has a $\Gamma/\Z$--grading \eqref{eq:2.22}, for which 
$$
V^{[\gamma]}=\oplus_{\D\in[\gamma]}V_\D.
$$ 
The corresponding Zhu algebra is called the $L_0$--twisted (or Ramond twisted) Zhu algebra and denoted by $Zhu_{L_0} V$.
If $\Gamma=\Z$ then one has the trivial grading \eqref{eq:2.22}  by  setting $
V^\Z=V$.
The corresponding Zhu algebra is denoted by  $Zhu_{\mathbb Z} V$ and is called 
the non--twisted Zhu algebra (\cite{DK}, Examples 2.14 and 2.15).
\end{example}

\section{The conjugate contragredient module}\label{app}

In this section we adapt to our setting the proofs of Section 5 of \cite{FLM}, where the action of a vertex operator algebra on the linear dual of a module is defined. If $a\in V_{\D_a}$, set 
\begin{align}
(-1)^{L_0}a&=e^{\pi\sqrt{-}1\D_a}a,\quad\s^{1/2}(a)=e^{\frac{\pi}{2}\sqrt{-}1p(a)}a.
\end{align}
%
\begin{lemma}\label{twistAz}Let $g$ be a diagonalizable parity preserving conjugate  linear  operator on $V$ with modulus $1$ eigenvalues,  such that $g(L)=L$.
Then one has the relation
\begin{equation}\label{congsign}
g Y(a,z)g^{-1} b=p(a,b)Y(g (a),-z)b
\end{equation}
if and only if  the operator \begin{equation}\label{gamma}\phi=(-1)^{L_0} \s^{1/2} g\end{equation} is  a conjugate linear automorphism of $V$. Moreover
\begin{equation}\label{iff} g^2=I\iff \phi^2=I.\end{equation}
\end{lemma}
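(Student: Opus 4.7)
The plan is to reduce the equivalence to a direct computation, by peeling off, one at a time, the two factors $(-1)^{L_0}$ and $\s^{1/2}$ that make up $\phi$. I would start by recording two conjugation rules:
\[(-1)^{\pm L_0}\,Y(a,z)\,(-1)^{\mp L_0}b \;=\; Y((-1)^{\pm L_0}a,\,-z)\,b,\]
\[\s^{1/2}\,Y(a,z)\,\s^{-1/2}b \;=\; p(a,b)\,Y(\s^{1/2}a,z)\,b.\]
The first is immediate from $e^{sL_0}Y(a,z)e^{-sL_0}=Y(e^{sL_0}a,e^s z)$ at $s=\pm i\pi$. The second is a short sign chase: $\s^{1/2}$ acts by $i^{p(v)}$ with $p(v)\in\{0,1\}$, and $p(a_{(n)}b)\equiv p(a)+p(b)\pmod 2$, so the discrepancy $i^{p(a)+p(b)-p(a_{(n)}b)}$ is nontrivial precisely when $p(a)=p(b)=1$ and there equals $(-1)^{p(a)p(b)}=p(a,b)$.

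Next, I would observe that in both directions of the asserted equivalence, $g$ commutes with $L_0$ (and therefore preserves conformal weights). In the automorphism direction, $\phi(L)=L$ (because $L$ is even of weight $2$ and $g(L)=L$), which forces $\phi L_0\phi^{-1}=L_0$ and hence $gL_0g^{-1}=L_0$. In the converse direction, applying the displayed relation with $a=L$ (so $p(L,b)=1$) and reading off the $z^{-2}$-coefficient gives $gL_0g^{-1}=L_0$ directly. This guarantees that $\s^{1/2}ga$ and $(-1)^{L_0}\s^{1/2}ga$ are scalar multiples of $ga$ of the same conformal weight and parity as $a$, which is what makes the sign bookkeeping below work.

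The main step is then to substitute $\phi=(-1)^{L_0}\s^{1/2}g$ and $\phi^{-1}=g^{-1}\s^{-1/2}(-1)^{-L_0}$ into the automorphism condition $\phi Y(a,z)\phi^{-1}b=Y(\phi a,z)b$, and strip off the outer $(-1)^{L_0}\s^{1/2}$ from both sides. Applying $\s^{-1/2}(-1)^{-L_0}$ to both sides, the left becomes $gY(a,z)g^{-1}b'$ with $b'=\s^{-1/2}(-1)^{-L_0}b$; on the right, the first rule turns $(-1)^{-L_0}Y(\phi a,z)$ into $Y(\s^{1/2}ga,-z)(-1)^{-L_0}$ (the phase from $(-1)^{-L_0}\phi a=\s^{1/2}ga$ cancels the one picked up in moving $(-1)^{-L_0}$ past $Y$), and then the $\s^{-1/2}$ analogue of the second rule contributes the sign $p(a,b')$, leaving $p(a,b')\,Y(ga,-z)b'$. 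All steps are reversible, yielding the equivalence in both directions.

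Finally, for $\phi^2=g^2$: since $g$ is conjugate linear, commutes with $L_0$ (which has real eigenvalues), and preserves parity, one has $g\,(-1)^{L_0}=(-1)^{-L_0}\,g$ and $g\,\s^{1/2}=\s^{-1/2}\,g$, so
\[\phi^2 \;=\; (-1)^{L_0}\s^{1/2}g\,(-1)^{L_0}\s^{1/2}g \;=\; (-1)^{L_0}\s^{1/2}(-1)^{-L_0}\s^{-1/2}\,g^2 \;=\; g^2,\]
whence $g^2=I\iff\phi^2=I$. The main obstacle throughout is the careful sign bookkeeping around $\s^{1/2}$, which is not itself an automorphism but only ``projectively'' multiplicative; this projective sign is precisely the source of the $p(a,b)$ factor appearing in the displayed relation.
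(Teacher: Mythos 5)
Your proof is correct and follows essentially the same route as the paper: both arguments reduce the equivalence to tracking the phase discrepancies coming from the two factors of $\phi=(-1)^{L_0}\s^{1/2}g$ (your conjugation rule for $(-1)^{L_0}$ is the paper's use of $\D_{a_{(n)}b}=\D_a+\D_b-n-1$ producing the sign $(-1)^{n+1}$ and the substitution $z\mapsto -z$, and your rule for $\s^{1/2}$ is the paper's congruence $p(a)+p(b)+2p(a)p(b)\equiv p(a_{(n)}b)\bmod 4$ producing the factor $p(a,b)$), and your verification of $\phi^2=g^2$ is the same commutation computation the paper performs after noting $\D_{g(a)}=\D_a$. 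The only difference is cosmetic: you organize the computation at the level of fields via two conjugation identities, while the paper works coefficient-wise with $\phi(a)_{(n)}\phi(b)$.
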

\begin{proof} Assume that $g$ satisfies \eqref{congsign}. Then 
\begin{align}\label{23}\phi(a)_{(n)}\phi(b)&=((-1)^{L_0} \s^{1/2}g)(a)_{(n)}((-1)^{L_0} \s^{1/2}g)(b)\\
&=e^{\pi\sqrt{-}1(\D_a+\D_b)}e^{\pi/2\sqrt{-}1(p(a)+p(b))}g(a)_{(n)}g(b).\notag
\end{align}
By \eqref{congsign}, $g(a_{(n)}b)=(-1)^{n+1} p(a,b)g(a)_{(n)}g(b)$. Substituting in \eqref{23}, and noting 
that $p(a)+p(b)+2p(a)p(b)=p(a_{(n)}b)\mod 4\Z$, we obtain
\begin{align*}\phi(a)_{(n)}\phi(b)&=e^{\pi\sqrt{-}1(\D_a+\D_b)}e^{\pi/2\sqrt{-}1(p(a)+p(b))}(-1)^{n+1} p(a,b)g(a_{(n)}b)\\
&=e^{\pi\sqrt{-}1\D_{a_{(n)}b}}e^{\pi/2\sqrt{-}1(p(a)+p(b)+2p(a)p(b))}g(a_{(n)}b)\\
&=e^{\pi\sqrt{-}1\D_{a_{(n)}b}}e^{\pi/2\sqrt{-}1p(a_{(n)}b)}g(a_{(n)}b)=\phi(a_{(n)}b).
\end{align*}
Reversing the argument we obtain the converse statement. 

To prove \eqref{iff} remark that  $g(L)=L$, hence $L_0g(a)=g(L)_0g(a)=g(L_0a)$, so, since $\D_a\in\R$,  $\D_{g(a)}=\D_a$. Moreover $g$   is parity preserving and conjugate linear, hence 
\begin{align*}\phi^2(a)&=(-1)^{L_0} \s^{1/2} g(-1)^{L_0} \s^{1/2} g(a)=e^{\pi\sqrt{-}1(\D_a+\tfrac{1}{2}p(a))}ge^{\pi\sqrt{-}1(\D_a+\tfrac{1}{2}p(a))}g(a)\\
&=e^{\pi\sqrt{-}1(\D_a+\tfrac{1}{2}p(a))}e^{-\pi\sqrt{-}1(\D_a+\tfrac{1}{2}p(a))}g^2(a)=g^2(a).
\end{align*}
\end{proof}

\begin{definition}\label{A} Let $g$ be a diagonalizable parity preserving conjugate linear  operator on $V$,  satisfying \eqref{congsign} and such that $g^2=I$.
Define $A(z):V\to V((z))$ by 
\begin{equation}\label{AZ}
A(z)v=e^{zL_1}  z^{-2L_0}gv,\quad v\in V.
\end{equation}
\end{definition}

\begin{lemma} 
We have
 \begin{equation}\label{conjAz}
p(a,b)A(w)Y(a,z)A(w)^{-1}b=i_{w,z}Y\left( A(z+w)a,\frac{-z}{(z+w)w}\right)b
\end{equation}
and
\begin{equation}\label{Azinverse}
A(z^{-1})=A(z)^{-1}.
\end{equation}
\end{lemma}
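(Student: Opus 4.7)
I plan to prove \eqref{conjAz} by splitting $A(w) = e^{wL_1} w^{-2L_0} g$ and moving each factor past $Y(a,z)$ in turn via three classical conjugation identities, and to prove \eqref{Azinverse} by evaluation on $L_0$-eigenvectors.

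For \eqref{conjAz}, I would begin with
$$A(w) Y(a,z) A(w)^{-1} b \;=\; e^{wL_1}\, w^{-2L_0}\, g\, Y(a,z)\, g^{-1}\, w^{2L_0}\, e^{-wL_1}\, b,$$
using $A(w)^{-1} = g^{-1} w^{2L_0} e^{-wL_1}$. Applying \eqref{congsign} to the vector $b' := w^{2L_0} e^{-wL_1} b$, which has the same parity as $b$ (since $L_0$ and $L_1$ are even operators), produces a factor $p(a,b)$ and replaces $Y(a,z)$ by $Y(g(a),-z)$. I would then use the two standard conjugation formulas
\begin{align*}
w^{-2L_0}\, Y(c,\zeta)\, w^{2L_0} &= Y(w^{-2L_0} c,\, w^{-2}\zeta),\\
e^{wL_1}\, Y(c,\zeta)\, e^{-wL_1} &= Y\!\bigl(e^{w(1-w\zeta)L_1}(1-w\zeta)^{-2L_0} c,\ \tfrac{\zeta}{1-w\zeta}\bigr)
\end{align*}
with $c = w^{-2L_0} g(a)$ and $\zeta = -z/w^2$. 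Since $1 - w\zeta = (w+z)/w$, one computes $(1-w\zeta)^{-2L_0} w^{-2L_0} = (w+z)^{-2L_0}$ and $\zeta/(1-w\zeta) = -z/((z+w)w)$, so the operator coefficient inside $Y$ collapses to $e^{(z+w)L_1}(z+w)^{-2L_0} g(a) = A(z+w)a$. Multiplying through by $p(a,b)$ (which squares to $1$) yields \eqref{conjAz}. The expansion prescription $i_{w,z}$ enters because inverting $1 - w\zeta = 1 + z/w$ requires expansion in the region $|w| > |z|$.

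For \eqref{Azinverse}, it suffices to verify the identity on an $L_0$-eigenvector $v$ of weight $\Delta_v$. The left side is
$$A(z^{-1}) v \;=\; e^{z^{-1}L_1}\, z^{2L_0}\, g(v) \;=\; z^{2\Delta_v}\, e^{z^{-1}L_1}\, g(v),$$
using $\Delta_{g(v)} = \Delta_v$ (from the proof of Lemma \ref{twistAz}). The right side, via $g^{-1} = g$, equals $g\, z^{2L_0}\, e^{-zL_1}\, v$. Expanding $e^{-zL_1}v = \sum_k \tfrac{(-z)^k}{k!} L_1^k v$ and noting that $L_1^k v$ is an $L_0$-eigenvector of weight $\Delta_v - k$, I get $z^{2L_0} e^{-zL_1} v = z^{2\Delta_v} e^{-z^{-1}L_1} v$. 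Finally, I apply $g$ and use $g L_n g^{-1} = (-1)^n L_n$ (an immediate consequence of $g(L)=L$ together with \eqref{congsign} applied to $a=L$), which sends $L_1^k$ to $(-1)^k L_1^k$ and cancels the $(-1)^k$ in the series, producing $z^{2\Delta_v} e^{z^{-1}L_1} g(v)$, matching the left side.

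The main obstacle I anticipate is the bookkeeping around the $i_{w,z}$ expansion in the $L_1$-conjugation step: the substitution $\zeta = -z/w^2$ forces $(1-w\zeta)^{-1}$ to be expanded as a series in $z/w$, and one must check that this expansion domain coincides with the $i_{w,z}$ on the right-hand side of \eqref{conjAz}. Once this is settled, the collapse of $((w+z)/w)^{-2L_0} w^{-2L_0}$ to $(w+z)^{-2L_0}$ reassembles the pieces into $A(z+w)$ cleanly.
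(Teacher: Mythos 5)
Your proposal is correct and follows essentially the same route as the paper: there too \eqref{conjAz} is obtained by conjugating $Y(a,z)$ successively by $w^{\pm 2L_0}$, by $g$ via \eqref{congsign}, and by $e^{\pm wL_1}$ via \eqref{conjL1}, with the same attention to the expansion domain $|z|<|w|$, while \eqref{Azinverse} is reduced to the identity $A(z)=gz^{-2L_0}e^{-z^{-1}L_1}$ and checked by expanding on $L_0$-eigenvectors using $g(L_1v)=-L_1g(v)$. The only difference is the immaterial order in which the commuting factors $g$ and $w^{-2L_0}$ are moved past $Y(a,z)$.
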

\begin{proof}
It is clear that 
\begin{equation}\label{conjzL0}
w^{-2L_0}Y(a,z)w^{2L_0}b=Y(w^{-2L_0}a,z/w^2)b.
\end{equation}
By \eqref{congsign}
\begin{equation}
p(a,b)g w^{-2L_0}Y(a,z)w^{2L_0}g^{-1}b=Y(g w^{-2L_0}a,-z/w^2)b.
\end{equation}
Finally we use that, if $|wz|<1$, then
\begin{equation}\label{conjL1}
e^{wL_1}Y(a,z)e^{-wL_1}=Y(e^{w(1-wz)L_1}(1-wz)^{-2L_0}a,\frac{z}{1-wz})
\end{equation}
(see (5.2.38) of \cite{FLM} and (4.9.17) of \cite{KB}) to get, for $|z|<|w|$,
\begin{equation}
p(a,b)e^{wL_1}Y(g w^{-2L_0}a,-z/w^2)e^{-wL_1}b=Y(e^{(w+z)L_1} g(w+z)^{-2L_0}a,\frac{-z}{w(w+z)})b,
\end{equation}
which is \eqref{conjAz}.

Since $g^2=I$, \eqref{Azinverse} is equivalent to
\begin{equation}\label{11}
A(z)a=g^{-1}z^{-2L_0}e^{-z^{-1}L_1}a=gz^{-2L_0}e^{-z^{-1}L_1}a.
\end{equation}
Next observe that
\begin{align*}
gz^{-2L_0}e^{-z^{-1}L_1}a&=\sum_rz^{-2L_0}(-1)^r\tfrac{1}{r!}g(L_1^ra)z^{-r}\\
&=\sum_rz^{-2\D_a}z^{2r}(-1)^r\tfrac{1}{r!}g(L_1^ra)z^{-r}\\
&=\sum_r(-1)^r\tfrac{1}{r!}g(L_1^ra)z^{r-2\D_a}.
\end{align*}
Since $g(L_1v)=-g(L)_1g(v)=-L_1g(v)$ we obtain
\begin{align*}
gz^{-2L_0}e^{-z^{-1}L_1}a=\sum_r\tfrac{1}{r!}L_1^rg(a)z^{r-2\D_a}=e^{zL_1}z^{-2L_0}g(a)=A(z)a.
\end{align*}
\end{proof}

\begin{remark}
Note that, by \eqref{11}, if $v$ is quasiprimary, we have
\begin{equation}\label{Az}
A(z)v=z^{-2\D_v} g(v).
\end{equation}
\end{remark}
If $\Upsilon$ is a $\Gamma/\Z$-grading on $V$, we let the \emph{opposite} grading $-\Upsilon$ be the grading defined by setting
$$
-\Upsilon([\gamma])=\Upsilon(-[\gamma]).
$$
We say that a $\Gamma/\Z$-grading is compatible with a map $\phi$ if $\phi(V^{[\gamma]})\subseteq V^{[\gamma]}$.

Let $M$ be a positive energy $\Upsilon$--twisted module and let $M^\dagger$ denote the restricted conjugate dual of $M$, that is 
\begin{equation}\label{degMdagger}
M^\dagger=\bigoplus_{n\ge 0} M^\dagger_n
\end{equation}
where $M^\dagger_n$ is the space of conjugate linear maps from $M_n$ to $\C$.
\begin{lemma}\label{eqrat} If $M+K\in\Z$, then
\begin{equation}\label{Rest}
Res_z z^M w^N i_{z,w} (z+w)^K =(-1)^{K+M-1} Res_z z^{-2 - K - M} w^{2 + 2 K + M + N} i_{w,z} (z+w)^{M}.
\end{equation}
\end{lemma}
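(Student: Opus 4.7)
The plan is to establish the identity by a direct computation of both residues after expanding the two factors $(z+w)^K$ and $(z+w)^M$ in their respective domains. Concretely, I would use
$$i_{z,w}(z+w)^K = \sum_{j\geq 0}\binom{K}{j} z^{K-j} w^j, \qquad i_{w,z}(z+w)^M = \sum_{j\geq 0}\binom{M}{j} z^j w^{M-j}.$$

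First I would compute the left-hand side. Taking $\Res_z$ of $z^M w^N \cdot i_{z,w}(z+w)^K$ selects the unique index $j = K+M+1$ producing a $z^{-1}$, yielding $\binom{K}{K+M+1}\, w^{K+M+N+1}$. The hypothesis $K+M\in\Z$ is exactly what is needed to guarantee that this index is an integer compatible with the support of the expansion; when $K+M+1<0$ no term of the series contributes and both sides vanish trivially. I would then perform the analogous computation for the right-hand side: applying $\Res_z$ to $z^{-2-K-M} w^{2+2K+M+N} \cdot i_{w,z}(z+w)^M$ selects $j=K+M+1$ from the second series, and after simplifying the exponent of $w$ (namely $2+2K+M+N+M-j = K+M+N+1$) one is left with $(-1)^{K+M-1}\binom{M}{K+M+1}\, w^{K+M+N+1}$.

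Since the powers of $w$ already agree, the proof reduces to the numerical identity $\binom{K}{K+M+1} = (-1)^{K+M-1}\binom{M}{K+M+1}$. This is an instance of the standard binomial flip $\binom{\alpha}{n} = (-1)^n \binom{n-\alpha-1}{n}$ (valid for $n\in\Z_+$ and arbitrary $\alpha\in\C$) at $\alpha = K$, $n = K+M+1$: then $n-\alpha-1 = M$, and since $K+M\in\Z$ one has $(-1)^{K+M+1}=(-1)^{K+M-1}$, so the two sides match. There is no serious obstacle here — the lemma is a formal power series bookkeeping statement. The only point requiring care is to track which variable is dominant in each $i_{\cdot,\cdot}$ expansion, so that the correct binomial series, and hence the correct residue, is used on each side.
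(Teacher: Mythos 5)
Your proof is correct and follows essentially the same route as the paper: both sides are expanded via the appropriate binomial series, the residue isolates the single index $j=K+M+1$ (with both sides vanishing when $K+M<-1$), and the match reduces to the binomial flip $\binom{\alpha}{n}=(-1)^n\binom{n-\alpha-1}{n}$. No issues.
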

\begin{proof} If $M+K<-1$, both sides of \eqref{Rest} are zero. If $M+K\ge -1$, then
\begin{align*}
Res_z z^M w^N i_{z,w} (z+w)^K&=Res_z \sum_{j\in \Z_+}\binom{K}{j} z^{M+K-j}w^{N+j}\\
&=\binom{K}{M+K+1}w^{N+M+K+1}.
\end{align*}
On the other hand
\begin{align*}
Res_z z^m w^n i_{w,z} (z+w)^k&=Res_z \sum_{j\in \Z_+}\binom{k}{j} z^{m+j}w^{n+k-j}\\
&=\binom{k}{-m-1}w^{n+m+k+1}\\
&=(-1)^{-m-1}\binom{-m-1-k-1}{-m-1}w^{n+m+k+1}.\\
\end{align*}
Equality holds for $m=-2 - K - M, n= 2 + 2 K + M + N, k = M.$
\end{proof}

\begin{theorem} \label{Contragredient}Let $\phi$ be a conjugate linear involution of a conformal  vertex algebra $V$. Choose $g$ as in Definition \ref{A} and define $A(z)$ by \eqref{AZ}. Let $\Upsilon$ be a $\Gamma/\Z$--grading on $V$ compatible with   $\phi$. Let $M$ be a $\Upsilon$--twisted positive energy module. Then 
\begin{enumerate}
\item[(a)] The  map $Y_{M^\dagger}$ given by
\begin{equation}\label{action}
\langle Y_{M^\dagger}(v,z)m', m\rangle=\langle m', Y_M(A(z)v, z^{-1})m\rangle,\ m\in M,m'\in M^\dagger,
\end{equation}
defines  on $M^\dagger$ the structure of a $(-\Upsilon)$--twisted $V$--module.
\item[(b)] If $\dim M_n<\infty$ for all $n$ then $(M^\dagger)^\dagger$ is naturally isomorphic to $M$.
\end{enumerate}
\end{theorem}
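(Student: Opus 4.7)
The plan is to adapt the proof of \cite[Theorem 5.2.1 and Proposition 5.3.1]{FLM} to the present twisted super setting. Formula \eqref{conjAz} will be the main bridge translating compositions of operators on $M^\dagger$ into compositions on $M$, while Lemma \ref{eqrat} is the combinatorial identity that matches formal variables after the substitution $z\mapsto z^{-1}$.

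For part (a), I would begin with the easy axioms. Since $L_0\vac=L_1\vac=0$ and $g(\vac)=\vac$, one has $A(z)\vac=\vac$, so $Y_{M^\dagger}(\vac,z)=I_{M^\dagger}$, giving the vacuum axiom. The compatibility of $\Upsilon$ with $\phi$ forces $g$ (and hence $A(z)$) to preserve the $\Upsilon$-grading, and combined with the substitution $z\mapsto z^{-1}$ in \eqref{action} this yields the $(-\Upsilon)$-twisted field property on $M^\dagger$; the positive energy hypothesis on $M$ supplies the required truncation from above.

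The main content of (a) is the Borcherds identity for $M^\dagger$, which I would verify in the residue form \eqref{Resborcherds}. Iterating \eqref{action} and pairing with $m\in M$ gives
\[
\langle Y_{M^\dagger}(a,z)Y_{M^\dagger}(b,w)m',m\rangle=\langle m',Y_M(A(w)b,w^{-1})Y_M(A(z)a,z^{-1})m\rangle,
\]
and similarly for the opposite ordering (with the parity sign $p(a,b)$) and for $Y_{M^\dagger}(Y(a,u)b,w)m'$. The Borcherds identity for $M$, applied to the right hand sides with $z^{-1},w^{-1}$ in place of the usual variables, rewrites them in terms of the iterated field $Y_M(Y(A(z)a,z^{-1}-w^{-1})A(w)b,w^{-1})$. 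Using $z^{-1}-w^{-1}=-(z-w)/(zw)$, formula \eqref{conjAz} converts $Y(A(z)a,z^{-1}-w^{-1})A(w)b$ into $p(a,b)\,A(w)Y(a,z-w)b$, which by \eqref{action} is precisely what $\langle Y_{M^\dagger}(Y(a,z-w)b,w)m',m\rangle$ produces. Lemma \ref{eqrat} is then the residue identity that matches the resulting monomials in $z,w,u$ after the inversion, so that each term of the $M^\dagger$-side of \eqref{Resborcherds} corresponds to its transformed counterpart on the $M$-side.

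For part (b), the natural pairing $\iota\colon M\to(M^\dagger)^\dagger$, $\iota(m)(m')=\overline{\langle m',m\rangle}$, is a graded linear isomorphism by the finite dimensionality of each $M_n$. Applying \eqref{action} twice and using \eqref{Azinverse} to collapse $A(z^{-1})A(z)=I$, one finds that $\langle Y_{(M^\dagger)^\dagger}(v,z)\iota(m),m'\rangle=\overline{\langle m',Y_M(v,z)m\rangle}=\langle \iota(Y_M(v,z)m),m'\rangle$, so $\iota$ is a $V$-module isomorphism. The main obstacle throughout is the careful residue calculation in (a): tracking the parity signs from \eqref{conjAz}, handling the swap of expansion regions $i_{z,w}\leftrightarrow i_{w,z}$ induced by $z\mapsto z^{-1}$, and matching the Borcherds monomials via Lemma \ref{eqrat} so that the three terms of the super twisted Borcherds identity transform exactly into one another.
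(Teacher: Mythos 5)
Your proposal follows essentially the same route as the paper's proof: the vacuum axiom and the $(-\Upsilon)$-twisted field property are handled directly from \eqref{action} and the grading, the Borcherds identity for $M^\dagger$ is reduced to that of $M$ by inverting the formal variables, with \eqref{conjAz} converting the iterated field $Y(A(z)a,z^{-1}-w^{-1})A(w)b$ into $p(a,b)A(w)Y(a,z-w)b$ and Lemma \ref{eqrat} matching the resulting monomials, and part (b) uses the natural pairing together with \eqref{Azinverse} exactly as in the paper. The argument is correct in outline and coincides with the paper's proof in all essential steps.
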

\begin{proof}Let $V=\oplus_{\gamma\in\Gamma/\Z} V^{\gamma}$ be the grading $\Upsilon$.
Write explicitly for $v\in V^{\gamma}_{\D_v}$,
$$
Y_{M^\dagger}(v,z)=\sum_{n\in -\gamma-\D_v}v^{M^\dagger}_{n}z^{-n-\D_v}.
$$
Then we have
$$\sum_n\langle v^{M^\dagger}_{n}m',m\rangle z^{-n-\D_v}=
\sum_n\langle m', \sum_t \tfrac{1}{t!}({L_1^t}g(v))^M_{n}m\rangle z^{n-\D_v}.
$$
In other words, if $n\in-\gamma-\D_v$, then
\begin{equation}\label{undagger}
\langle v^{M^\dagger}_{n}m',m\rangle =\langle m',\sum_t\tfrac{1}{t!}(L_1^tg(v))^M_{-n}m\rangle.
\end{equation}
In particular, $v^{M^\dagger}_nM^\dagger_j\subseteq M^\dagger_{j-n}$.
This proves that,  by \eqref{degMdagger}, $Y^{M^\dagger}$ is indeed a  $(-\Upsilon)$--twisted quantum field.

Next observe that
\begin{equation}\label{vacdagger}
\langle \vac^{M^\dagger}_{(n)}m',m\rangle=\langle \vac^{M^\dagger}_{n+1}m',m\rangle=\langle m',\vac^M_{-n-1}m\rangle=\d_{-n-1,0}\langle m',m\rangle,
\end{equation}
hence \eqref{vacuumaxiom} for $M^\dagger$ follows.

We now prove the Borcherds identity \eqref{Resborcherds} for $M^\dagger$, that is
\begin{align}
&Res_{u}\langle Y_{M^\dagger}(Y(a,u)b,w)i_{w,u}(w+u)^ku^nw^lm',m\rangle\notag\\
&=Res_z(\langle Y_{M^\dagger}(a,z)Y_{M^\dagger}(b,w)i_{z,w}z^k(z-w)^nw^lm',m\rangle)\\
&-p(a,b)Res_z(\langle Y_{M^\dagger}(b,w)Y_{M^\dagger}(a,z)i_{w,z}z^k(z-w)^nw^lm',m\rangle)\notag
\end{align}
for all $n\in\Z$, $k\in[-\gamma_a]$, $l\in[-\gamma_b]$.
Since
 \begin{align*}
\langle Y_{M^\dagger}(a,z)Y_{M^\dagger}(b,w)m',m\rangle&=\langle m',Y_M(A(w)b,w^{-1})Y_M(A(z)a,z^{-1})m\rangle,\\
\langle Y_{M^\dagger}(b,w)Y_{M^\dagger}(a,z)m',m\rangle&=\langle m',Y_M(A(z)a,z^{-1})Y_M(A(w)b,w^{-1})m\rangle,\\
\langle Y_{M^\dagger}(Y(a,u)b,w)m',m\rangle&=\langle m' ,Y_M(A(w)Y(a,u)b,w^{-1})m\rangle,
\end{align*}
we have to prove that
\begin{align*}
&Res_{u}(\langle m' ,Y_M(A(w)Y(a,u)b,w^{-1})m\rangle i_{w,u}(w+u)^ku^nw^l)\\
&=Res_z(\langle m',Y_M(A(w)b,w^{-1})Y_M(A(z)a,z^{-1})m\rangle i_{z,w}z^k(z-w)^nw^l)\\
&-p(a,b)Res_z(\langle m',Y_M(A(z)a,z^{-1})Y_M(A(w)b,w^{-1})m\rangle i_{w,z}z^k(z-w)^nw^l).
\end{align*}
Hence we need to check that
\begin{align}
&Res_{u}(Y_M(A(w)Y(a,u)b,w^{-1}) i_{w,u}(w+u)^ku^nw^l)\notag\\
&=Res_z(Y_M(A(w)b,w^{-1})Y_M(A(z)a,z^{-1})i_{z,w}z^k(z-w)^nw^l)\label{needtocheck}\\
&-p(a,b)Res_z(Y_M(A(z)a,z^{-1})Y_M(A(w)b,w^{-1}) i_{w,z}z^k(z-w)^nw^l).\notag
\end{align}
Changing variables in the Borcherds identity  \eqref{Resborcherds} for $Y_M$ we obtain, for all $n\in\Z$, $m\in[\gamma_a]$, $l\in[\gamma_b]$,
\begin{align*}
&Res_{t}Y_M(Y(a,t^{-1})b,w^{-1})i_{w^{-1},t^{-1}}(w^{-1}+t^{-1})^mt^{-n-2}w^{-l}\\
&=Res_t(Y_M(a,t^{-1})Y_M(b,w^{-1})i_{t^{-1},w^{-1}}t^{-m-2}(t^{-1}-w^{-1})^nw^{-l})\\
&-p(a,b)Res_t(Y_M(b,w^{-1})Y_M(a,t^{-1})i_{w^{-1},t^{-1}}t^{-m-2}(t^{-1}-w^{-1})^nw^{-l}),\notag
\end{align*}
which is equivalent to
\begin{align}
&Res_{t}(Y_M(Y(a,t^{-1})b,w^{-1})i_{t,w}(w+t)^{m}t^{-n-2-m}w^{-l-m}\notag\\
&=Res_{t}(Y_M(a,t^{-1})Y_M(b,w^{-1})i_{w,t}t^{-m-n-2}(w-t)^nw^{-l-n})\label{Jacres}\\
&-p(a,b)Res_{t}(Y_M(b,w^{-1})Y_M(a,t^{-1})i_{t,w}t^{-m-2-n}(w-t)^nw^{-l-n})\notag.
\end{align}
Write explicitly $A(w)a=\sum_{r\in\Z_+} C_r(a)w^{r-2\D_a}$, where $C_r(a)\in V$. Then
\begin{align*}
Y_M(A(t)a,t^{-1})&=\sum_{r\in\Z_+,h\in[\gamma_a]}C_r(a)_{(h)}t^{h+1}t^{r-2\D_a}=\sum_{r\in\Z_+}Y_M(C_r(a),t^{-1})t^{r-2\D_a},
\end{align*}
so, by \eqref{Jacres},
\begin{align*}
&Res_{t}(Y_M(A(t)a,t^{-1})Y_M(A(w)b,w^{-1})i_{w,t}t^{-m-n-2}(w-t)^nw^{-l-n})\\
&-p(a,b)Res_{t}(Y_M(A(w)b,w^{-1})Y_M(A(t)a,t^{-1})i_{t,w}t^{-m-2-n}(w-t)^nw^{-l-n})\\
&=\sum_{r}Res_{t}(Y_M(C_r(a),t^{-1})Y_M(A(w)b,w^{-1})i_{w,t}t^{-m-n-2+r-2\D_a}(w-t)^nw^{-l-n})\\
&-p(a,b)\sum_{r}Res_{t}(Y_M(A(w)b,w^{-1})Y_M(C_r(a),t^{-1})i_{t,w}t^{-m-2-n+r-2\D_a}(w-t)^nw^{-l-n})\\
&=\!\!\sum_{r}\!Res_{t}(Y_M(Y(C_r(a),t^{-1})A(w)b,w^{-1})i_{t,w}(w+t)^{m-r+2\D_a}t^{-n-2-m+r-2\D_a}w^{-l-m+r-2\D_a}\\
&=Res_{t}(Y_M(i_{t,w}Y(A(\frac{wt}{w+t})a,t^{-1})A(w)b,w^{-1})(w+t)^{m}t^{-n-2-m}w^{-l-m}.
\end{align*}
Therefore we have
\begin{align*}
&Res_{z}(Y_M(A(z)a,z^{-1})Y_M(A(w)b,w^{-1})i_{w,z}z^{k}(w-z)^{n}w^{l})\\
&-p(a,b)Res_{z}(Y_M(A(w)b,w^{-1})Y_M(A(z)a,z^{-1})i_{z,w}z^{k}(w-z)^{n}w^{l})\\
&=Res_{t}(Y_M(i_{t,w}Y(A(\frac{wt}{w+t})a,t^{-1})A(w)b,w^{-1})(w+t)^{-k-n-2}t^{k}w^{l+k+2n+2}.
\end{align*}
Hence
\eqref{needtocheck} turns into
 \begin{align*}
&Res_{t}(i_{t,w}Y_M(Y(A(\frac{wt}{w+t})a,t^{-1})A(w)b,w^{-1})(w+t)^{-k-n-2}t^{k}w^{l+k+2n+2}\\
&=-p(a,b)(-1)^nRes_{t}(Y_M(A(w)Y(a,t)b,w^{-1}) i_{w,t}(w+t)^kt^nw^l).
\end{align*}
Expand the L.H.S. above as
{\small
\begin{align*}
&Res_{t}(Y_M(i_{t,w}Y(A(\frac{wt}{w+t})a,t^{-1})A(w)b,w^{-1})(w+t)^{-k-n-2}t^{k}w^{l+k+2n+2})=\\
&\sum_{p,q,r,s}Res_t((C_r(a)_{(p)}C_s(b))_{(q)}i_{t,w}(w+t)^{-r+2\D_a-k-n-2}t^{k+r-2\D_a+p+1}w^{l+k+2n+2+q+s-2\D_b+r-2\D_a}),
\end{align*}
}
and apply Lemma \ref{eqrat} to obtain
\begin{align*}
&Res_{t}(Y_M(i_{t,w}Y(A(\frac{wt}{w+t})a,t^{-1})A(w)b,w^{-1})(w+t)^{-k-n-2}t^{k}w^{l+k+2n+2}\\
&=Res_{t}\sum_{p,q,r,s}(-1)^{p-n-1}(C_r(a)_{(p)}C_s(b))_{(q)}i_{w,t}(w+t)^{k+r-2\D_a+p+1}t^{n-p-1}w^{l+q+s+p+1-2\D_b}\\
&=(-1)^{n+1}Res_{t}\sum_{p}(-1)^{p}Y_M((A(w+t)a_{(p)}A(w)b),w^{-1})i_{w,t}(w+t)^{k+p+1}t^{n-p-1}w^{l+p}\\
&=(-1)^{n+1}Res_{t}(i_{w,t}Y_M(Y((A(w+t)a,\frac{-t}{w(w+t)})A(w)b),w^{-1})(w+t)^{k}t^{n}w^{l}).
\end{align*}
 Thus we are reduced to prove that
 \begin{align*}
&Res_{t}(i_{w,t}Y_M(Y(A(t+w)a,\frac{-t}{w(t+w)})A(w)b,w^{-1})w^{l}(t+w)^{k}t^{n}\\
&=p(a,b)Res_{t}(Y_M(A(w)Y(a,t)b,w^{-1}) i_{w,t}(w+t)^{k}t^nw^{l}),
\end{align*}
or equivalently
\begin{equation}
p(a,b)A(w)Y(a,t)b=i_{w,t}Y\left( A(t+w)a,\frac{-t}{(t+w)w}\right)A(w)b,
\end{equation}
which is equation  \eqref{conjAz} with   $A(w)b$ in place of $b$.  Claim (a) follows.


Let us now check (b). We need only to check that the map $m
\mapsto f_m\in (M^\dagger)^\dagger$ where $\langle f_m,m'\rangle=\overline{\langle m',m\rangle}$ is a $V$--module isomorphism. The map is clearly bijective since we are assuming $\dim M_n<\infty$. Now
\begin{align*}
\langle(Y_{(M^\dagger)^\dagger}(v,z)f_m,m'\rangle&=\langle f_m,Y_{M^\dagger}(A(z)v,z^{-1})m'\rangle=\overline{\langle Y_{M^\dagger}(A(z)v,z^{-1})m',m\rangle}\\
&=\overline{\langle m',Y_{M}(A(z)A(z^{-1})v),z)m\rangle}.
\end{align*}
Now use \eqref{Azinverse} to get
$$
\langle(Y_{(M^\dagger)^\dagger}(v,z)f_m,m'\rangle=\overline{\langle m',Y_{M}(v,z)m\rangle}=\langle f_{Y_M(v.z)m},m'\rangle.
$$
\end{proof}
 
\section{Invariant Hermitian forms on conformal vertex algebras} \label{S3} Let $V$ be a  conformal  vertex algebra.
By a Hermitian form on $V$ we mean a map $(\, \cdot \,\, , \, \cdot\, ): V\times V \to \mathbb C$ conjugate linear in the first argument and linear in the second, such that $(v_1,v_2)=\overline{(v_2,v_1)}$ for all $v_1,v_2\in V$.

Let $\phi$ be a conjugate linear  parity preserving involution of $V$. 
 Consider the conjugate linear operator  (cf \eqref{gamma})
\begin{equation}\label{g}g=((-1)^{L_0} \s^{1/2})^{-1}\phi.\end{equation}
By \eqref{iff}, we have that $g^2=I$.
Obviously $g$ satisfies the  hypothesis of Definition \ref{A}.
 Two instances of such a situation are the following.
\begin{enumerate}
\item Recall from  Remark \ref{voas} that, if $V$ is a vertex operator superalgebra, then  $(-1)^{2\D_a}=(-1)^{p(a)}$ for all $a\in V$. 
Set $s(a)=\D_a+\half p(a)$ and note that in this case $s(a)$ is an integer.   Then 
\begin{align*}
\D_a+2\D_a^2&=s(a)-\half p(a)+2(s(a)-\half p(a))^2\\&=s(a)-\half p(a)+2s(a)^2-2s(a)p(a)+\half p(a)^2.
\end{align*}
As $p(a)-p(a)^2=0$ and $p(a)$, $s(a)$ are integers, we see that
$$
\D_a+2\D_a^2\equiv s(a)\mod 2
$$
so that
$$
g(a)=e^{-\pi\sqrt{-}1(\D_a+\half p(a))}\phi(a)=(-1)^{s(a)}\phi(a)=(-1)^{\D_a+2\D_a^2}\phi(a)
$$
hence, if $V$ is a vertex operator superalgebra,
\begin{equation}\label{gsuper}
g=(-1)^{L_0+2L_0^2}\phi.
\end{equation}
\item
The vertex algebra of symplectic bosons provides an example of a conformal vertex algebra that is not a vertex operator superalgebra,  where our definition applies. Let $R_\R$ be a real  finite dimensional even vector space equipped with a bilinear non-degenerate symplectic form $\langle \cdot ,\cdot \rangle$. Let $R=\C\otimes R_\R$. Equip $R$ with the structure of a nonlinear conformal algebra with $\l$-bracket given by
$$
[a_\l b]=\langle a,b\rangle.
$$
Let $V$ be the corresponding universal enveloping vertex algebra.   The Virasoro vector is
$$
L=\half\sum :T(a^i)a_i:
$$
with $\{a_i\}$, $\{a^i\}$ dual bases of $R$. The elements in $R$ are primary of conformal weight $\tfrac{1}{2}$. Let  $\phi(r)=\bar r$, where  $\bar r$ is complex conjugation with respect to $R_\R$. Then, clearly,
$$
[\phi(a)_\l \phi(b)]=\overline{\langle a,b\rangle},
$$
hence $\phi$ extends to a conjugate linear involution  of $V$. In this case 
$$
g(r)=-\sqrt{-1}\bar r, \ r\in R.
$$
\end{enumerate}

\bigskip

The following definition first appeared in \cite{AiLin} for vertex operator superalgebras, generalizing the definition,  given in \cite{DL}, for vertex operator algebras.
\begin{definition}\label{invariant_form}
Let $\phi$ be a conjugate linear involution of a conformal  vertex algebra $V$. Choose $g$ as in Definition \ref{A} and define $A(z)$ by \eqref{AZ}. A Hermitian form $(\, \cdot \,\, , \, \cdot\, )$ on $V$ is said to be \emph{$\phi$--invariant} if, for all $a\in V$, 
\begin{equation}\label{i}
(v,Y(a, z)u)=(
Y(A(z)a, z^{-1})v,u),\quad u,v\in V.
\end{equation}
\end{definition}

\begin{remark}\label{quasiprimary} If $v\in V$ is quasi-primary, then, due to \eqref{Az},  \eqref{i} becomes
\begin{equation}\label{2.2} (v,a_nu)=(g(a)_{-n}(v),u),\quad u,v\in V.
\end{equation}
\end{remark}

The statement of the main result of \cite{Li} can be extended to our setting as follows.
\begin{theorem}\label{wt}In the setting of Definition \ref{invariant_form},
the space of  $\phi$--invariant Hermitian forms on $V $ is linearly isomorphic to the set of conjugate linear functionals 
$F\in V_0^\dagger$ such that $\langle F, L_1V_1\rangle=0$ and $\langle F, g(v)\rangle=\overline{\langle F, v\rangle}$ for all $v\in V_0$.
\end{theorem}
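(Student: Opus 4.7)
I plan to work through the correspondence, indicated in the introduction, between $\phi$-invariant Hermitian forms on $V$ and conjugate linear $V$-module homomorphisms $\Theta:V\to V^\dagger$, where $V^\dagger$ carries the module structure \eqref{14} furnished by Theorem \ref{Contragredient}; invariance of the form is equivalent, by a direct comparison of \eqref{i} and \eqref{14}, to $\Theta$ intertwining the $V$-actions. The problem then reduces to parametrising such homomorphisms by functionals on $V_0$ via $\Theta\mapsto F$ with $\langle F,v\rangle=\langle\Theta(\vac),v\rangle$, equivalently $\langle F,v\rangle=(v,\vac)$ at the level of forms.

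For the forward direction I use that the Virasoro vector $L$ is quasiprimary and satisfies $g(L)=L$ (since $\phi(L)=L$ and $\Delta_L=2$), so Remark \ref{quasiprimary} gives $(v,L_n u)=(L_{-n}v,u)$. Combined with Hermitian symmetry, for $u\in V_1$,
\begin{equation*}
\langle F, L_1 u\rangle=(L_1u,\vac)=\overline{(\vac,L_1u)}=\overline{(L_{-1}\vac,u)}=\overline{(T\vac,u)}=0,
\end{equation*}
so $\langle F,L_1V_1\rangle=0$. Also $g(\vac)=\vac$, and the condition $\langle F,g(\vac)\rangle=\overline{\langle F,\vac\rangle}$ reduces to $(\vac,\vac)\in\R$, which is built into Hermitian symmetry. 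Injectivity comes from the same identity with $n=0$, which forces $(V_m,V_n)=0$ for $m\ne n$; extracting the $z^0$-coefficient from \eqref{i} applied to $Y(a,z)\vac=e^{zT}a$ then gives the explicit reconstruction
\begin{equation*}
(v,a)=\sum_{r\ge 0}\tfrac{1}{r!}\bigl\langle F,\,(L_1^r g(a))_{(2\Delta_a-r-1)}v\bigr\rangle,
\end{equation*}
so the form is determined by $F$.

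For surjectivity, given an admissible $F$ I extend it to $\tilde F\in V^\dagger$ by $\tilde F|_{V_0}=F$ and $\tilde F|_{V_n}=0$ for $n>0$, and aim to define $\Theta$ by $\Theta(a):=a^{V^\dagger}_{(-1)}\tilde F$, so that $\Theta(\vac)=\tilde F$. That $\Theta$ is a well-defined conjugate linear $V$-module map amounts to the vanishing $a^{V^\dagger}_{(n)}\tilde F=0$ for all $a\in V$ and $n\ge 0$, which via formula \eqref{action} becomes a family of linear equations on $F$ obtained by pairing $\tilde F$ against modes of $L_1^r g(a)$ acting on arbitrary $u\in V$. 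This is the main technical step: I would apply the commutator identity $(L_1 b)_{(n)}=[L_1,b_{(n)}]-(2\Delta_b-n-2)b_{(n+1)}$ iteratively, together with $b_{(n)}\vac=0$ for $n\ge 0$ and the $L_0$-orthogonality of $\tilde F$, to rewrite every obstruction as $F$ evaluated on an element of $L_1V_1$, which is killed by the first hypothesis. Once $\Theta$ is constructed, the associated sesquilinear form is invariant, and Hermitian symmetry $(u,v)=\overline{(v,u)}$ follows because both sides are invariant sesquilinear forms whose restrictions to $V_0\times\{\vac\}$ agree thanks to $\langle F,g(\vac)\rangle=\overline{\langle F,\vac\rangle}$, so uniqueness forces them to coincide.
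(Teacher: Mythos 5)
Your overall architecture is the same as the paper's: parametrize invariant forms by $V$--module maps $\Theta:V\to V^\dagger$ into the contragredient module of Theorem \ref{Contragredient}, read off $F=(\,\cdot\,,\vac)|_{V_0}$, and reconstruct the form from $F$ via $\Theta(a)=a^{V^\dagger}_{(-1)}F$. The forward direction and the injectivity/reconstruction formula are fine (your shortcut for the condition $\langle F,g(v)\rangle=\overline{\langle F,v\rangle}$, checking it only at $v=\vac$, is legitimate because $V_0=\C\vac$ in the stated setting; the paper's longer induction showing $(L_1^rb)_0\vac\in L_1V_1$ is there because the proof is meant to work without that assumption, cf.\ Remark \ref{V0dim1}). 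The Hermitian-symmetry argument at the end, comparing $(u,v)$ with $\overline{(v,u)}$ via uniqueness, also matches the paper.

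The genuine gap is in the surjectivity step, and it sits exactly where the hypothesis $\langle F,L_1V_1\rangle=0$ has to do its work. You correctly reduce everything to showing that $\tilde F$ is a vacuum--like vector of $V^\dagger$, i.e.\ $a^{V^\dagger}_{(n)}\tilde F=0$ for all $a$ and $n\ge 0$, but you only \emph{describe} a plan for this (iterating $(L_1b)_{(n)}=[L_1,b_{(n)}]-(2\Delta_b-n-2)b_{(n+1)}$ and hoping every obstruction lands in $L_1V_1$). Degree considerations kill the modes that lower energy, but for each $a$ of conformal weight $\Delta_a$ there remain the nontrivial conditions $a^{V^\dagger}_{(n)}\tilde F=0$ for $0\le n\le \Delta_a-1$, and the proposed commutator bookkeeping is never carried out; this is the technical heart of the theorem, not a routine verification. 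The paper avoids the computation entirely by invoking Proposition 2.3 of \cite{Li}: a vector $m$ in a module is vacuum--like if and only if $L_{-1}m=0$. With that, one only has to check the single condition $L^{V^\dagger}_{-1}\tilde F=0$, and by the contragredient formula $\langle L^{V^\dagger}_{-1}\tilde F,u\rangle=\langle \tilde F,L_1u\rangle$, which vanishes precisely because $\tilde F$ is supported on $V_0$ and $\langle F,L_1V_1\rangle=0$. You should either import that characterization of vacuum--like vectors (which instantly closes the gap) or actually execute the mode-by-mode reduction you sketch.
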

The proof is the same as in \cite[Theorem 3.1]{Li}. In the following we simply  check that the argument also works in our modified setting. Recall that an  element $m$ in a $V$--module $M$ is called vacuum--like if $a_{(n)}m=0$ for all $n\ge 0$ and all $a\in V$. 
By Proposition 2.3 of \cite{Li}, a vector $m\in M$ is vacuum-like if and only if $L_{-1}m=0$, i.e. the space of vacuum--like vectors is the space $M^{L_{-1}}$ of $L_{-1}$--invariants; moreover, if $m$ is a vacuum--like vector in $M$, then 
$Y(u,z)m=e^{zL_{-1}}u_{(-1)}m$.

Consider the map $$\Psi: Hom_V(V,M)\to M,\quad \Psi(\psi)=\psi(\vac).$$ By Proposition 3.4 of \cite{Li},  for any $V$--module $M$, $\Psi$ is an isomorphism between $\Hom_V(V,M)$ and the space $M^{L_{-1}}$.
\vskip10pt
\begin{proof}[Proof of Theorem \ref{wt}] 
Assume that $(\, \cdot \,\, , \, \cdot\, )$ is a $\phi$--invariant Hermitian form on $V$. Note that, since $g(L)=L$, \eqref{2.2} implies that $(L_0v,w)=(v,L_0w)$.  In particular  the eigenspaces of $L_0$ are orthogonal. Define $F\in V_0^\dagger$ by $\langle F, v\rangle=(v,\vac)$. Then $(\, \cdot \,\, , \, \cdot\, )$ is uniquely determined by $F$, since, letting $u=\vac$ and taking $Res_z z^{-1}$ of both sides of \eqref{i}, we obtain
$$
(v,a)=Res_z z^{-1}(Y(A(z)a,z^{-1})v,\vac).
$$
 By Remark \ref{quasiprimary}, 
\begin{equation}\label{2.2a}
(L_1v,\vac)=(v,L_{-1}\vac)=0,\ (\vac,L_1v)=(L_{-1}\vac,v)=0,
\end{equation}
hence, since $L_{-1}\vac=0$,
 we see that $\langle F, L_1V_1\rangle=0$. 
 
 Next we prove that, if $a\in V$, then
\begin{equation}\label{avac}
(g(a),\vac)=(\vac,a),
\end{equation}
Since the form is Hermitian, we have $(\vac,a)=\overline{(a,\vac)}$, 
so that \eqref{avac} implies $\langle F,g(a)\rangle=\overline{\langle F, a\rangle}$.
To prove \eqref{avac} we observe that, since $g(L)=L$, $g$ preserves the $L_0$--eigenspace decomposition. Since the eigenspaces of $L_0$ are orthogonal, we have that \eqref{avac} is satisfied if $\D_a\ne0$. We can therefore assume that $\D_a=0$, so that
\begin{align*}
(\vac,a)=Res_{z}z^{-1}(\vac,Y (a,z)\vac)&=Res_{z}z^{-1}(Y (A(z)a,z^{-1})\vac,\vac)\\
&=\sum_r(\tfrac{1}{r!}(L^r_1g(a))_{\D_a}\vac,\vac)\\
&=\sum_r\tfrac{1}{r!}((L^r_1g(a))_{0}\vac,\vac).
\end{align*}
By \eqref{2.2a}, in order to prove \eqref{avac}, we need only to prove that 
\begin{equation}\label{L1rv}
(L_1^rg(a))_{0}\vac\in L_1V_1,\ r\ge 1, a\in V_0.
\end{equation}
 We prove by induction on $r$ that 
$$
(L_1^rb)_{0}\vac\in L_1V_1,\ r\ge 1, b\in V_0.
$$
If $r=1$, then
$$
[L_1,b_{-1}]=\sum_{j\in\Z_+}\binom{2}{j}(L_{(j)}b)_0=(L_{-1}a)_0+2\D_bb_0+(L_1b)_0.
$$
Since $\D_b=0$, $(L_{-1}b)_0=0$, so
\begin{align*}
(L_1b)_0\vac=L_1(a_{-1}\vac)-b_{-1}(L_1\vac)=L_1(b_{-1}\vac)\in L_1V_1.
\end{align*}
If $r>1$, then
\begin{align*}
[L_1,(L_1^{r-1}b)_{-1}]&=\sum_{j\in\Z_+}\binom{2}{j}(L_{(j)}(L_1^{r-1}b))_0\\
&=L_{-1}(L_1^{r-1}b)_0-2(r-1)(L_1^{r-1}b)_0+(L_1^rb)_0\\
&=-(r-1)(L_1^{r-1}b)_0+(L_1^rb)_0\, ,
\end{align*}
so 
\begin{align*}
(L_1^rb)_0\vac&=L_1((L_1^{r-1}b)_{-1}\vac)-(L_1^{r-1}b)_{-1}L_1\vac+(r-1)(L_1^{r-1}b)_0\vac\\
&=L_1((L_1^{r-1}b)_{-1}\vac)+(r-1)(L_1^{r-1}b)_0\vac.
\end{align*}
The claim now follows by the induction hypothesis.

We now prove the converse statement. Consider $V$ as a $\Gamma/\Z$--graded vertex algebra with $\Gamma=\Z$ and the trivial grading $\Upsilon(\Z)=V$. Then the state--field correspondence defines on $V$ the structure of a $\Upsilon$--twisted positive energy module. Since $\Upsilon$ is clearly compatible with $\phi$, by Thoerem \ref{Contragredient}, we have a $\Upsilon$--twisted module structure on $V^\dagger$.
Fix $F\in V_0^\dagger$ which vanishes on $L_1V_1$. Then $F$ is a vacuum--like vector in $V^\dagger$. In particular the map  $\Phi_F:V\to V^\dagger$ defined by $\Phi_ F( v)= v^\dagger_{(-1)}F$ is a $V$--module homomorphism.
Here and in what follows we write for simplicity $a^\dagger_n$ instead of $a^{V^\dagger}_n$.
Define 
$$ 
(u,v)=\langle v^\dagger_{(-1)}F,u\rangle=\langle \Phi_F( v),u\rangle.
$$
Let us check that this form is $\phi$--invariant:
\begin{align*}
(v,Y(a,z)u)&=\langle \Phi_F (Y(a,z)u),v\rangle\\
&=\langle Y_{V^\dagger}(a,z) \Phi_F (u),v\rangle\\
&=\langle  \Phi_F (u),Y(A(z)a, z^{-1})v\rangle\\
&=(Y(A(z)a, z^{-1})v,u).
\end{align*}

It remains to show that, if $\langle F, a\rangle=\overline{\langle F,g(a)\rangle}$, then the form is Hermitian.
Since the form is $\phi$--invariant, by \eqref{avac},
$$
\overline{(a,\vac)}=\overline{\langle F,a\rangle}=\langle F,g(a)\rangle=(g(a),\vac)=(\vac,a).
$$
We can now check that the form is Hermitian:
\begin{align*}
\overline{(u,v)}&=Res_{z}z^{-1}\overline{(u,Y (v,z)\vac)}\\
&=\Res_{z}z^{-1}\overline{( Y (A(z)v,z^{-1})u,\vac)}\\
&=Res_{z}z^{-1}( \vac,Y (A(z)v,z^{-1})u)\\
&=Res_{z}z^{-1}(Y (A(z)A(z^{-1})v,z)\vac,u)\\
&=Res_{z}z^{-1}(Y (v,z)\vac,u)=(v,u),
\end{align*}
where, in the last step, we used \eqref{Azinverse}.\end{proof}

\begin{remark}\label{V0dim1}
Note that we didn't use in the proof  the assumptions that $V_0=\C\vac$ and that $\dim V_n=0$ for $n<0$. However, if $V_0=\C\vac$, then Theorem \ref{wt} implies that there exists a non--zero $\phi$--invariant Hermitian form on $V$ if and only if $V_1$ consists of quasiprimary elements, and for this form
$(\vac,\vac)\ne0$. The last statement follows observing that the eigenspaces of $L_0$ are orthogonal to each other and the kernel of a $\phi$--invariant Hermitian form is an ideal.
So if $(\vac,\vac)=0$, then $\vac$ lies in the kernel, hence the kernel of the form is $V$.  Also, such a Hermitian form, satisfying $(\vac,\vac)=1$, is unique.
\end{remark}
\begin{lemma}\label{sl2verma} Let $M$ be a module over $sl(2):=span\{e,h,f\}$, such that  $h$ is diagonalisable with finite--dimensional eigenspaces and negative eigenvalues.  Then $M$ is a direct sum of  Verma modules.
\end{lemma}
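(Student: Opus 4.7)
My plan is to realize $M$ as the module induced from its $e$-kernel. Set $N := \ker(e|_M)$ and let $\mathfrak{b} = \C e \oplus \C h$ act on $N$ with $e\cdot n = 0$ and $h$ by weight. Consider the $sl(2)$-module homomorphism
\[
\Psi : U(sl(2)) \otimes_{U(\mathfrak{b})} N \longrightarrow M, \qquad u \otimes n \mapsto u\cdot n.
\]
By PBW, the domain decomposes as $\bigoplus_\mu M(\mu)^{\dim N_\mu}$, with $\mu$ running over the weights of $N$ (all strictly negative), so it will suffice to prove that $\Psi$ is bijective.

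The starting point is local nilpotence of $e$ on $M$: for $v\in M_\mu$, $e^k v\in M_{\mu+2k}$ vanishes once $\mu+2k\ge 0$. The computation $e(f^a\otimes n)=a(\mu_n-a+1)\,f^{a-1}\otimes n$ (for $n\in N_{\mu_n}$) proves local nilpotence on the domain as well, and identifies its $e$-kernel with $1\otimes N$.

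For injectivity of $\Psi$, I would use that $\Psi$ restricted to $1\otimes N$ is the inclusion $N\hookrightarrow M$. If $\ker\Psi$ were nonzero, local nilpotence of $e$ on the domain would force it to contain a nonzero $e$-killed element, hence to meet $1\otimes N$ nontrivially, contradicting the injectivity of $\Psi|_{1\otimes N}$.

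For surjectivity, set $M' := \Psi\bigl(U(sl(2))\otimes_{U(\mathfrak{b})}N\bigr) = U(sl(2))\cdot N$; again by local nilpotence it is enough to prove that $ev\in M'$ forces $v\in M'$. Given such a $v$ of weight $\mu$, expand $ev=\sum_j c_j f^{a_j} v_j$ with $v_j\in N_{\mu_j}$ and $\mu_j=\mu+2+2a_j$, and try $w=\sum_j d_j f^{a_j+1} v_j\in M'$. Using $ef^{a+1}v_j=(a+1)(\mu_j-a)f^a v_j$, the equation $ew=ev$ reduces to
\[
d_j\,(a_j+1)(\mu+2+a_j) = c_j \quad \text{for each } j.
\]
The crux, where the negativity hypothesis is essential, is that $\mu+2+a_j$ never vanishes: otherwise $a_j=-\mu-2\ge 0$ would force $\mu_j=-\mu-2\ge 0$, contradicting $v_j\in N\subseteq M$. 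Hence the system solves, $v-w\in\ker e=N\subseteq M'$, and $v\in M'$ as required.
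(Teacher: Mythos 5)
Your proof is correct, and it takes a genuinely different route from the paper's. The paper first reduces to the case where all $h$--eigenvalues are congruent mod $2$, decomposes $M$ into generalized eigenspaces of the Casimir operator $\Omega$, and uses the negativity of the weights to show that all irreducible subquotients share a single negative highest weight $n$; it then concludes that $M=U(sl(2))M^e$ is a sum of irreducible Verma modules, hence a direct sum. You instead prove directly that the natural map $\Psi:U(sl(2))\otimes_{U(\mathfrak b)}\ker(e|_M)\to M$ is an isomorphism, using only two consequences of the negativity hypothesis: local nilpotence of $e$ (on both source and target), and the non-vanishing of the structure constants $(a_j+1)(\mu+2+a_j)$ that makes the surjectivity step solvable. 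Your argument is more elementary --- it avoids the Casimir, the mod--$2$ reduction, and any appeal to semisimplicity --- and it has the added benefit of exhibiting the decomposition explicitly as $\bigoplus_\mu M(\mu)^{\dim(\ker e)_\mu}$; the paper's argument is shorter if one takes the standard Casimir machinery for granted. One small point worth making explicit in your write-up: in the surjectivity step the identity $d_j(a_j+1)(\mu+2+a_j)=c_j$ is obtained by matching the expansions termwise, which is legitimate because choosing $w$ so that each term of $ew$ equals the corresponding term of $ev$ certainly gives $ew=ev$; no independence of the vectors $f^{a_j}v_j$ is needed.
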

\begin{proof}Since the sum of $h$--eigenspaces with eigenvalues congruent mod 2 is a submodule, we may assume that all eigenvalues of $h$ are congruent mod $2$.
Since the $h$--eigenspaces are finite--dimensional, $U$ decomposes as the direct sum of the generalized eigenspaces for the Casimir operator  $\Omega$ of $sl(2)$. We can therefore assume that $\Omega$ has only one  eigenvalue.
Any irreducible subquotient of $M$ has negative highest weight, say $n$,
and the eigenvalue of $\Omega$ is $\half n^2 +n$ on it.
Hence if two irreducible subquotients with non--equal highest weights have the same $\Omega$--eigenvalue, the sum of these highest weights is $-2$.
Since all eigenvalues of $h$ are negative and congruent mod 2,
we deduce that all irreducible subquotients have the same highest weight $n$.
So, on the space $M^e$ of $e$--invariants (which is non-zero since the set of eigenvalues of $h$ is bounded above), $h$ has one eigenvalue $n$, and the same is true for any quotient of $M$. But on $N=M/U(sl(2))M^e$, $h$ has eigenvalues strictly smaller that $n$,
hence $N^e=0$ and $M=U(sl_2)M^e$. Since $n<0$, any vector from $M^e$ generates an irreducible Verma module, so $M$ is a direct sum of Verma modules with highest weight $n$.
 \end{proof}
\begin{proposition}\label{13} Let $V$ be a  conformal  vertex algebra  such that  $L_1V_1=0$, i.e. $V_{1}$ consists of quasiprimary 
vectors.
 Let $\{v_1, v_2,\ldots\}$ be a minimal system of strong generators, which 
includes $L$, and consists of eigenvectors for $L$. Then, summing to the $v_i$
elements from $L_{-1}V$, we can make these generators quasiprimary.
\end{proposition}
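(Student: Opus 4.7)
The plan is to use the Virasoro $sl(2)$-subalgebra $\langle L_1, L_0, L_{-1}\rangle$ acting on $V$, combined with Lemma \ref{sl2verma}, to obtain a weight-graded vector space decomposition $V = V^{qp} \oplus L_{-1}V$ (where $V^{qp}$ denotes the subspace of quasiprimary vectors), and then to modify the generators one at a time by induction on conformal weight.

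First I would set up the $sl(2)$-triple $e = L_1$, $h = -2L_0$, $f = -L_{-1}$; the relations $[h,e] = 2e$, $[h,f] = -2f$, $[e,f] = h$ are immediate from the Virasoro commutation relations. Consider $M := \bigoplus_{n>0} V_n$. Then $h$ preserves each $V_n$ and $f$ raises the conformal weight by $1$, so $fM \subseteq M$. For $e = L_1$ the only potentially troublesome cases are $V_{1/2}$ (where $L_1V_{1/2} \subseteq V_{-1/2} = 0$) and $V_1$ (where $L_1V_1 = 0$ by hypothesis), hence $eM \subseteq M$. On $M$ the operator $h$ is diagonalisable with finite-dimensional eigenspaces and strictly negative eigenvalues $\{-2n : n > 0\}$, so Lemma \ref{sl2verma} decomposes $M = \bigoplus_\alpha V(\lambda_\alpha)$ into $sl(2)$-Verma modules. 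In each summand the highest weight generator (killed by $e = L_1$) is quasiprimary and $V(\lambda_\alpha) = \C v_\alpha \oplus L_{-1}V(\lambda_\alpha)$. Summing over $\alpha$ and adjoining $V_0 = \C\vac$ yields $V = V^{qp} \oplus L_{-1}V$, and in fact $V_\Delta = V_\Delta^{qp} \oplus L_{-1}V_{\Delta-1}$ at each conformal weight.

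Next I would dispose of the low-weight cases directly: $\vac$ is quasiprimary, any generator in $V_{1/2}$ or $V_1$ is quasiprimary by the observations above, and the Virasoro generator $L = L_{-2}\vac$ satisfies $L_1L = [L_1, L_{-2}]\vac = 3L_{-1}\vac = 0$. For a generator $v_i$ of weight $\Delta_i > 1$, the graded decomposition produces $q_i \in V_{\Delta_i}^{qp}$ and $w_i \in V_{\Delta_i - 1}$ with $v_i = q_i + L_{-1}w_i$, so the replacement $v_i \mapsto v_i' := v_i - L_{-1}w_i = q_i$ adds an element of $L_{-1}V$ and produces a quasiprimary generator. I would process the $v_i$ in order of non-decreasing conformal weight.

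The point that really needs verification is that the modified family $\{v_i'\}$ still strongly generates $V$. For this I would argue by induction on weight: assuming the modified generators of weight $<\Delta_i$ already strongly generate $\bigoplus_{\Delta < \Delta_i}V_\Delta$, the element $w_i \in V_{\Delta_i - 1}$ is (by strong generation of $V$ by the original $v_j$'s) a normally ordered polynomial in $T$-derivatives of original generators of weight strictly less than $\Delta_i$, and by the induction hypothesis these can be rewritten in terms of the already-modified $v_j'$. Hence $L_{-1}w_i = Tw_i$ lies in the subalgebra strongly generated by the $v_j'$ with $\Delta_j \leq \Delta_i - 1$, and so $v_i = v_i' + L_{-1}w_i$ is recovered from the modified set; the same bookkeeping shows minimality is preserved. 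The main obstacle I expect is precisely this careful induction on strong generation; the algebraic heart of the argument is Lemma \ref{sl2verma}, after which everything is formal.
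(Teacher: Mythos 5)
Your proposal is correct and follows essentially the same route as the paper: apply Lemma \ref{sl2verma} to $\bigoplus_{n>0}V_n$ with the Virasoro $sl(2)$-triple to get the decomposition of $V$ into $\C\vac$ plus Verma modules with quasiprimary highest weight vectors, then modify the generators by induction on conformal weight via $v_i=v_i'+L_{-1}b$. Your more detailed verification that the modified family still strongly generates is exactly the step the paper compresses into ``by inductive assumption $b$ lies in the subalgebra generated by quasiprimary generators''; the only point worth adding explicitly is that $v_i'\neq 0$, which follows from minimality of the original generating set (otherwise $v_i=L_{-1}b$ would be redundant).
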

\begin{proof} 
By Lemma \ref{sl2verma}, applied to  $U=\bigoplus_{n>0} V_n$ and $f=L_{-1}, h=-2L_0, e=-\half L_1$, we get 
\begin{equation}\label{sl2dec}
V= \C\vac\oplus\sum_i M_i,
\end{equation}
 where the $M_i$  are Verma modules for $sl(2)$  with highest weight vectors 
quasiprimary 
elements. We proceed by induction on the conformal weight of a generator. If the conformal weight is $\half$ or  $1$ there is nothing to prove. Take now a generator $v_i$ whose  conformal weight 
is strictly greater than $1$. By \eqref{sl2dec}, we can write  $v_i=v'_i+L_{-1}b$, where $v'_i$  is quasiprimary and non-zero, due to minimality. 
By inductive assumption $b$ lies in the subalgebra  generated by 
quasiprimary generators. Hence we can replace $v_i$ by $v'_i$.
\end{proof}

Recall (cf. \cite{KB}) that, since $V_0=\C\vac$, one can define the expectation value $\langle v \rangle$ of $v$ by the equation $P_{V_0}(v)=\langle v\rangle \vac$ where $P_{V_0}$ is the projection onto $V_0$ with respect to the decomposition $V=V_0\oplus(\sum_{n\ne 0}V_n)$.
\begin{cor}\label{hf}
Suppose that $V$ is a conformal vertex algebra such that $V_1$ consists of 
quasiprimary vectors. Let $\phi$ be a conjugate linear involution of  $V$.
Then there exists a unique  $\phi$--invariant Hermitian form $(\, \cdot \,\, , \, \cdot\, )$ on $V$ such that $(\vac,\vac)=1$.
Moreover for any collection $\{U^i\mid i\in I\}$ of quasiprimary elements that strongly generate $V$ (it exists by Proposition \ref{13}) we have
\begin{align}
&\left((U^{i_1}_{j_1})^{m_1}\cdots (U^{i_t}_{j_t})^{m_t}\vac,(U^{i'_1}_{j'_1})^{m'_1}\cdots (U^{i'_{r}}_{j'_{r}})^{m'_{r}}\vac\right)\notag\\
&=\left\langle((g(U^{i_t})_{-j_t})^{m_t}\cdots(g(U^{i_1})_{-j_1})^{m_1}(U^{i'_1}_{j'_1})^{m_1'}\cdots (U^{i'_{r}}_{j'_{r}})^{m'_{r}}\vac\right\rangle.\label{invform}
\end{align}
\end{cor}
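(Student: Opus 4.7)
The plan is to handle existence and uniqueness first by applying Theorem \ref{wt}, and then to derive the explicit formula \eqref{invform} by iterating the quasiprimary invariance identity from Remark \ref{quasiprimary}.

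For existence and uniqueness I would invoke Theorem \ref{wt} directly: the hypothesis that $V_1$ consists of quasiprimary vectors is exactly $L_1V_1=0$, so the condition $\langle F, L_1V_1\rangle = 0$ is automatic. Because $V_0=\CC\vac$, the space $V_0^\dagger$ is one-dimensional, and, using that $g(\vac)=\vac$ (which holds since $\phi(\vac)=\vac$, $\vac$ is even, and $\D_\vac=0$), the condition $\langle F, g(v)\rangle=\overline{\langle F, v\rangle}$ for $v\in V_0$ reduces to the reality of $\langle F,\vac\rangle$. The normalization $(\vac,\vac)=1$ then selects a unique such $F$, as already pointed out in Remark \ref{V0dim1}.

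For the formula, the key tool is the identity $(v,a_n u)=(g(a)_{-n}v,u)$ valid for $a$ quasiprimary (Remark \ref{quasiprimary}). Using Hermiticity and complex conjugation, this is equivalent to $(a_n v,u)=(v,g(a)_{-n}u)$, which allows one to transfer a mode from the first slot of the form to the second. I would then peel off the factors of $(U^{i_1}_{j_1})^{m_1}\cdots(U^{i_t}_{j_t})^{m_t}$ one at a time starting from the leftmost, pushing each $U^{i_\alpha}_{j_\alpha}$ across as $g(U^{i_\alpha})_{-j_\alpha}$ appended on the left of the operator acting on the second slot. A straightforward induction shows that this reverses the order, so after all transfers the first slot becomes $\vac$ and the second slot becomes
$$
(g(U^{i_t})_{-j_t})^{m_t}\cdots(g(U^{i_1})_{-j_1})^{m_1}(U^{i'_1}_{j'_1})^{m'_1}\cdots(U^{i'_r}_{j'_r})^{m'_r}\vac.
$$

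To finish, I would replace $(\vac,w)$ by $\langle w\rangle$: applying \eqref{2.2} with $a=L$ (quasiprimary) shows that $L_0$ is self-adjoint for the form, hence distinct $L_0$-eigenspaces are orthogonal; combined with $V_0=\CC\vac$ and $(\vac,\vac)=1$ this gives $(\vac,w)=\langle w\rangle$. The main subtlety is the bookkeeping of the order reversal in the iteration; once that is set up, strong generation ensures these monomials span $V$, so the formula determines the form, which also yields an independent proof of uniqueness.
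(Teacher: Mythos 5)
Your proposal is correct and follows essentially the same route as the paper: existence and uniqueness via Theorem \ref{wt} (with the reality condition on $F$ trivialized by $g(\vac)=\vac$ and the normalization), then the explicit formula by combining \eqref{2.2} with Hermiticity to get $(U_nv,w)=(v,g(U)_{-n}w)$ for quasiprimary $U$, iterating to move all modes to the second argument with reversed order, and finally identifying $(\vac,\cdot)$ with the expectation value via orthogonality of $L_0$-eigenspaces and $V_0=\C\vac$.
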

\begin{proof}
Since $L_1V_1=\{0\}$, the first statement follows from Theorem \ref{wt}.

To prove the second statement, note that, by \eqref{2.2}, for a quasiprimary element $U$, 
we have $(g(U)_nv,w)=(v,U_{-n}w)$ and
$$
(U_nv,w)=\overline{(w,U_nv)}=\overline{(g(U)_{-n}w,v)}=(v,g(U)_{-n}w).
$$
Since $V_0=\C\vac$, we have $(\vac,a)=\langle a\rangle$,
hence formula \eqref{invform} follows.
\end{proof}
\begin{definition}If the Hermitian form \eqref{invform} is positive definite, the vertex algebra $V$ is called {\it unitary}.
\end{definition}

\begin{lemma}\label{purely}
Let $V$ be a conformal vertex algebra and let $\phi$ be a conjugate linear involution on $V$. If there is a $\phi$--invariant  positive definite Hermitian form on $V$ and $a\in V$ is a non-zero quasiprimary element such that $\phi(a)=a$ then
\begin{align*}
&\langle a_{\D_a}a_{-\D_a}\vac\rangle\in \R\backslash\{0\}&&\text{if $(-1)^{2L_0}\s(a)=a$,}\\
&\langle a_{\D_a}a_{-\D_a}\vac\rangle\in \sqrt{-1} \R\backslash\{0\}&&\text{if $(-1)^{2L_0}\s(a)=-a$}.
\end{align*}
\end{lemma}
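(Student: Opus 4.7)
The plan is a direct computation using the quasiprimary invariance formula from Remark \ref{quasiprimary}. First, I would normalize the given positive definite form so that $(\vac,\vac)=1$; this is legitimate since $V_0=\C\vac$ forces the form to be unique up to a real scalar (Remark \ref{V0dim1}) and positive definiteness forces $(\vac,\vac)>0$.

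Next I would unpack $g(a)$. Since $\phi(a)=a$, formula \eqref{113} gives $g(a)=\lambda a$ with $\lambda:=e^{-\pi\sqrt{-1}(\D_a+p(a)/2)}$. Setting $k:=2\D_a+p(a)\in\Z$, the hypothesis $(-1)^{2L_0}\s(a)=\pm a$ translates to $e^{\pi\sqrt{-1}k}=\pm 1$, i.e.\ $k$ even or odd, respectively. Since $\lambda^2=e^{-\pi\sqrt{-1}k}=\pm 1$ with the same sign, one finds: when $(-1)^{2L_0}\s(a)=a$, $\lambda\in\{\pm 1\}\subset\R$; when $(-1)^{2L_0}\s(a)=-a$, $\lambda\in\{\pm\sqrt{-1}\}\subset\sqrt{-1}\,\R$.

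Finally, applying \eqref{2.2} with $v=\vac$, $n=\D_a$, $u=a_{-\D_a}\vac=a_{(-1)}\vac=a$, and using $\C$--linearity of the state--field correspondence together with conjugate linearity of the form in its first slot, one gets
$$(\vac,a_{\D_a}a_{-\D_a}\vac)=(g(a)_{-\D_a}\vac,a_{-\D_a}\vac)=(\lambda a,a)=\bar\lambda\,(a,a).$$
On the other hand $a_{\D_a}a_{-\D_a}\vac\in V_0=\C\vac$ equals $\langle a_{\D_a}a_{-\D_a}\vac\rangle\vac$, so the left-hand side is exactly $\langle a_{\D_a}a_{-\D_a}\vac\rangle$. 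Combining with $(a,a)>0$ (positive definiteness and $a\neq 0$) yields $\langle a_{\D_a}a_{-\D_a}\vac\rangle=\bar\lambda\,(a,a)$, which lies in $\R\setminus\{0\}$ in the first case and in $\sqrt{-1}\,\R\setminus\{0\}$ in the second. The computation is essentially routine; the only delicate point is the phase bookkeeping in \eqref{113} and remembering that conjugate linearity in the first slot produces $\bar\lambda$ rather than $\lambda$ — this is precisely what converts Case 2 into a purely imaginary answer. I do not anticipate any serious obstacle.
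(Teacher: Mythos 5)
Your proof is correct and is essentially the paper's argument: the paper states the same identity in the form $(a,a)=e^{-\frac{\pi}{2}\sqrt{-1}(2\D_a+p(a))}\langle a_{\D_a}a_{-\D_a}\vac\rangle$, which is your relation $\langle a_{\D_a}a_{-\D_a}\vac\rangle=\bar\lambda\,(a,a)$ read in the other direction, and the parity-of-$k$ analysis of the phase is identical.
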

\begin{proof}
Since 
$$
(a,a)=e^{-\tfrac{\pi}{2}\sqrt{-1}(2\D_a+p(a))}\langle a_{\D_a}a_{-\D_a}\vac\rangle,
$$
and $(a,a)>0$, we see that  $\langle a_{\D_a}a_{-\D_a}\vac\rangle$ is real and non-zero if $(-1)^{2\D_a}\s(a)=a$, while it is purely imaginary and non-zero otherwise.
\end{proof}

In conclusion of this section we discuss invariant Hermitian forms on tensor products of vertex algebras. Recall from \cite{KB} that if $V, W$ are vertex algebras, their tensor product is the vertex algebra having   $V\otimes  W$ as space of states, $\vac\otimes 	\vac$ as vacuum vector and $T\otimes I + I \otimes T$ as translation operator. The state--field correspondence is given by 
$$Y(u \otimes v,z)=Y(u,z)\otimes Y(v,z).$$ 
If $V, W$ are conformal  vertex algebras, also $V\otimes  W$ is conformal: its Virasoro vector is $L=L_V\otimes \vac+\vac\otimes L_W$.

Let $\phi_V$, $\phi_W$ be conjugate linear involutions  of $V$, $W$ and set
$$g_V=((-1)^{(L_V)_0}\s_V^{1/2})^{-1}\phi_V,\quad g_{W}=((-1)^{(L_W)_0}\s_W^{1/2})^{-1}\phi_W, $$
$$g= g_V\otimes  g_W,\quad\phi=\phi_V\otimes \phi_W.
$$
Observe that  
$$
\phi=(-1)^{L_0}\s^{1/2}g.
$$
Moreover
$$
A(z)=e^{zL_1}z^{-2L_0}g=(e^{z(L_V)_1}\otimes e^{z(L_W)_1})(z^{-2(L_V)_0}\otimes z^{-2(L_W)_0})(g_V\otimes g_W)=A_V(z)\otimes A_W(z).
$$

If $(\, . \, , \, . \, )_V,\ (\, . \, , \, . \, )_W$ are invariant Hermitian forms on $V,W$, respectively, we can induce an invariant Hermitian form $(\, . \, , \, . \, )_{V\otimes W}$ on $V\otimes W$ by setting
$$
(v_1\otimes w_1 , v_2\otimes w_2 )_{V\otimes W}=(v_1,v_2)_V(w_1,w_2)_W.$$
Indeed, 
\begin{align*}
&(v_1\otimes v_2,Y(a\otimes b, z)(w_1\otimes w_2))=(v_1,Y(a,z)w_1)_V(v_2,Y( b, z)(w_2))_W\\
&=(Y(A_V(z)a,z^{-1})v_1,w_1)_V(Y( A_W(z)b, z^{-1})v_2,w_2)_W\\
&=(Y(A_V(z)\otimes A_W(z)(a\otimes b),z^{-1})(v_1\otimes v_2),w_1\otimes w_2)\\
&=(Y(A(z)(v_1\otimes v_2),w_1\otimes w_2).
\end{align*}
\section{Examples of invariant Hermitian forms}\label{eihf}
In this Section we apply Corollary \ref{hf} to fermionic, bosonic, affine, and lattice vertex algebras. 
\subsection{Superfermions}\label{ff}
Consider a superspace $A=A_{\bar 0}\oplus A_{\bar 1}$  endowed with a non-degenerate even skew-supersymmetric bilinear form
$(\, . \, | \, . \, )$.   Let $V(A)$ be the universal vertex algebra of the Lie  conformal superalgebra $A\oplus\C K$ with $\l$-bracket 
$$
[a_\lambda b]=(a|b) K,
$$
 $K$ being an even central element. Let $F$ be the fermionic vertex algebra: 
$$
F=V(A)/(K-\vac).
$$
Let  $\phi$ be a conjugate linear involution of $A$ such that
$$
(\phi(a)|\phi(b))=\overline{(a|b)}.
$$
 By setting $\phi(K)=K$ we can extend $\phi$ to a conjugate linear involution of $A\oplus \C K$. Indeed
$$
[\phi(a)_\l \phi(b)]=(\phi(a)|\phi(b))=\overline{(a|b)} K=\phi((a|b) K).
$$
This implies that $\phi$ extends to a conjugate linear involution of $V(A)$, hence, since $\phi(K-\vac)=K-\vac$, to an involution of $F$.

Fix a  basis $\{a^i\}$ of  $A$ and let $\{b^i\}$ be its dual basis w.r.t. $(\, . \, | \, . \, )$ (i.e. $(a^i|b^j)=\d_{i,j}$).
The Virasoro vector is \cite{KB}
\begin{equation}
  \label{eq:1.59}
L = \frac{1}{2} \sum_{i=1}^n : (T b^i)a^i:\,.
\end{equation}
It is easy to see that $\phi(L)=L$.
We embed $A$ in $F$ by identifying $v$ with  $:v\vac:$. It is easily checked that $v\in A$  is a primary element of $F$ of conformal weight $1/2$.
Set
\begin{equation}\label{gA}
g_A=((-1)^{L_0}\s^{1/2})^{-1}\phi.
\end{equation}
By \eqref{iff}, we have that $g_A^2=I$. Note that
\begin{equation}\label{phig}
g_A(a)=-\sqrt{-1} \phi(a),\ a\in A_{\bar0},\quad g_A(a)=-\phi( a), \ a\in A_{\bar1}.
\end{equation}

The set $\{a^i\}$  strongly and freely  generates  $F$. This means that, if we  order $(-\tfrac{1}{2}-\ganz_+)\times \{1,\dots,m+n\} $ lexicographically, then the set 
$$B=\bigcup_r\{(a^{i_1}_{j_1})^{h_1} \cdots (a_{j_r}^{i_r})^{h_r}\vac\mid (j_1,i_1)<\cdots <(j_r,i_r), h_s=1 \text{ if $p(a^{i_s})=1$}\}$$
 is a basis of $F$. 
With this choice one easily checks that 
$$F_0=\C\vac,\quad F_1=span_{\C}(\{:a^ia^j:\}).
$$
Since, by Wick formula \cite{KB},
\begin{align*}
[L_\l:a^ia^j:]&=:T(a^i)a^j:+ :a^iT(a^j): + \l:a^ia^j:+\int_0^\l
([T(a^i)_\mu a^j]+\half\l [a^i_\mu a^j] )d\mu\\
&=T(:a^ia^j:) + \l:a^ia^j:-\half \l^2 (a^i|a^j)+\half\l^2(a^i|a^j)\\
&=T(:a^ia^j:) + \l:a^ia^j:,
\end{align*}
we see that $L_1(F_1)=\{0\}$, hence Corollary \ref{hf} applies. Let $(\, \cdot \,\, , \, \cdot\, )$ be the unique invariant Hermitian form on $F$ such that $(\vac,\vac)=1$. By \eqref{phig} and \eqref{2.2}, the invariance amounts to
$$
(v_ja,b)=-\sqrt{-1}(a,\phi( v)_{-j}b),\ v\in A_{\bar0},\quad (v_ja,b)= -(a,\phi( v)_{-j}b),\ v\in A_{\bar1}$$
for all $a,b\in F$, $j\in\half+\ganz$.

We now discuss the unitarity of $F$. Assume that $F$ is unitary and $A_{\bar0}\ne 0$. Choose $a\ne0$ in $A_{\bar0}$; we can assume $\phi(a)=a$. Then, by Lemma \ref{purely}, $\langle a_{1/2}a_{-1/2}\vac\rangle\ne 0$ but $\langle a_{1/2}a_{-1/2}\vac\rangle=(a|a)=0$.
It follows that, if $F$ is unitary, then $A=A_{\bar1}$. Set $A_\R=\{a\in A\mid \phi(a)=-a\}$.
Then, if $a\in A_\R$,
$$
0<(a,a)=\langle a_{1/2}a_{-1/2}\vac\rangle=(a|a),
$$
so  $(\, . \, | \, . \, )_{A_\R\times A_\R}$  must be positive definite. In such a case,  choose $\{a^i\}$ to be an orthonormal basis of $A_\R$. 
It can be checked (say by induction on $r$) that
$$
\left\langle a^{i_t}_{-j_t}\cdots a^{i_1}_{-j_1}a^{i'_1}_{j'_1}\cdots a^{i'_{r}}_{j'_{r}}\vac\right\rangle=\d_{r,t}\prod_{s=1}^r\d_{i_s,i'_s}\prod_{s=1}^r\d_{j_s,j'_s}
$$
so the invariant Hermitian  form is the form defined by declaring the basis $B$ to be orthonormal. Hence $F$ is a  unitary conformal vertex algebra
 if and only 
if $A$ is purely odd.

\subsection{Superbosons}\label{bosons}
Let $\h$ be a  vector superspace equipped with a supersymmetric even bilinear form $(\, . \, | \, . \, )$. 
 Let $V(\h)$ be the universal vertex algebra of the Lie  conformal superalgebra $\h\oplus\C K$ with $\l$-bracket 
$$
[v_\lambda w]=\l(v|w) K,
$$
 $K$ being an even central element. Let $M(\h)$ be the vertex algebra: 
$$
M(\h)=V(\h)/(K-\vac).
$$

Let $\phi$ be a conjugate linear involution of $\h$. As in the previous example, if
$$
(\phi(a)|\phi(b))=\overline{(a|b)}.
$$
 we can extend $\phi$ to a conjugate linear involution of   $M(\h)$. 

 Fix a  basis $\{a^i\}$ of  $\h$ and let $\{b^i\}$ be its dual basis w.r.t. $(\, . \, | \, . \, )$ (i.e. $(a^i|b^j)=\d_{i,j}$).
The Virasoro vector is
 \begin{equation}
  \label{LSB}
L =  \tfrac{1}{2}\sum_{i=1}^n :  b^ia^i:\,.
\end{equation}
 It is easy to see that $\phi(L)=L$.
 
We embed $\h$ in $M(\h)$ by identifying $h$ with  $:h\vac:$. It is easily checked that $h\in\h$  is a primary element of $M(\h)$ of conformal weight $1$. 
 
Set
\begin{equation}\label{gAb}
g_\h=((-1)^{L_0}\s^{1/2})^{-1}\phi.
\end{equation}
By \eqref{iff}, we have that $g_\h^2=I$. Note that
\begin{equation}\label{phigh}
g_\h(a)=- \phi(a),\ a\in A_{\bar0},\quad g_\h(a)=\sqrt{-1}\phi( a), \ a\in A_{\bar1}.
\end{equation}

As in the previous example we can apply Corollary \ref{hf}, thus there is a unique  $\phi$--invariant Hermitian  form $(\, \cdot \,\, , \, \cdot\, )$ on $M(\h)$ such that $(\vac , \, \vac)=1$.

We now discuss the unitarity of $M(\h)$. Assume that $M(\h)$ is unitary and $\h_{\bar1}\ne 0$. Choose $h\ne0$ in $\h_{\bar1}$; we can assume $\phi(h)=h$. Then, by Lemma \ref{purely}, $\langle h_{1}h_{-1}\vac\rangle\ne 0$ but $\langle h_{1}h_{-1}\vac\rangle=(h|h)=0$.
It follows that, if $M(\h)$ is unitary, then $\h=\h_{\bar0}$. If this is the case,
set $\h_\R=\{h\in \h\mid \phi(h)=-h\}$. Then, as in Subsection \ref{ff}, we must have that $(\, . \, | \, . \, )_{\h_\R\times \h_\R}$ is positive definite.  We choose an orthonormal basis $\{a^i\}$ of $\h_\R$,
;  the $\phi$--invariant Hermitian form is therefore given by 
\begin{align*}
&\left((a^{i_1}_{j_1})^{m_1}\cdots (a^{i_t}_{j_t})^{m_t}\vac,(a^{i'_1}_{j'_1})^{m'_1}\cdots (a^{i'_{r}}_{j'_{r}})^{m'_{r}}\vac\right)\notag=\left\langle(a^{i_t}_{-j_t})^{m_t}\cdots(a^{i_1}_{-j_1})^{m_1}(a^{i'_1}_{j'_1})^{m_1'}\cdots (a^{i'_{r}}_{j'_{r}})^{m'_{r}}\vac\right\rangle.
\end{align*}
If we  order $(-\mathbb N)\times \{1,\dots,\dim \h\} $ lexicographically, then the set 
$$B=\bigcup_r\{a^{i_1}_{j_1} \cdots a_{j_r}^{i_r}\vac\mid (j_1,i_1)<\cdots <(j_r,i_r)\}$$
 is a basis of $M(\h)$. 
As in Example \ref{ff}, one can check that the basis $B$
is orthogonal; moreover the norm of each element is positive, so $M(\h)$ is a unitary vertex operator superalgebra, if and only if $\h$ is purely even.


\subsection{Affine vertex algebras}\label{Affine}Let $\fg $ be a simple Lie algebra or a basic classical simple
finite--dimensional Lie superalgebra and let $(\, . \, | \, . \, )$ be a   supersymmetric non-degenerate even invariant bilinear form on $\g$. 

We normalize the form $(\, . \, | \, . \, )$ on $\g$ by choosing an even highest root $\theta$ of $\g$ as in \cite{KW1} or \cite{AKMPP}, and  requiring $(\theta|\theta)=2$. 
If $\g=D(2,1,a)$, we assume $a\in\R$.

Let $\phi$ be a conjugate linear involution of $\g$. We assume that
$$
(\phi(x) |\phi(y))=\overline{(x|y)},
$$
noting that, if $\g$ is a Lie algebra, then the above assumption always holds.

Let 
$\Cur \g=\g\oplus\C K $
be the current Lie conformal algebra associated to $\g$  \cite{KB}. We extend $\phi$ to $\Cur\g$ by setting $\phi(K)=K$. 
Since 
$$
[\phi(x)_\l \phi(y)]=[\phi(x),\phi(y)]+\lambda(\phi(x)|\phi(y)K=\phi([x,y])+\lambda \overline{(x|y)}K=\phi([x_\l y]),$$
$\phi$ is a conjugate linear involution of $\Cur \g$, hence we can extend $\phi$ to a conjugate linear involution of the
 universal enveloping vertex algebra $V(\g)$ of $\Cur\g$.
 
 Choosing $k\in\R$, we note that $\phi(K-k\vac)=K-k\vac$, so $\phi$ pushes down to a conjugate linear involution of the the universal affine vertex algebra of level $k$.
 
We identify $a \in \fg$
with $:a\vac: \in V^k (\g)$. Let $h^{\vee}$  be the
dual Coxeter number of $\g$, i.e.
the eigenvalue of the Casimir operator $\sum_i b^ia^i$ on $\g$ divided by 2,  where $\{ a^i \}$ and $\{ b^i\}$ are  dual bases of
$\g$, i. e. $(a^i|b^j)=\delta_{ij}$.

A Virasoro vector is provided by the Sugawara construction (defined for $k\neq-h^{\vee}$), see e.g. \cite{KB}:
\begin{equation}
  \label{eq:1.57}
  L^{\fg} = \frac{1}{2(k+h^{\vee})} \sum_{i=1}^{\dim\g} : b^i a^i: \,.
\end{equation}
 It is easy to see that $\phi(L^\g)=L^\g$ provided that $k\in\R$.
 
 Set 
 $$g_\g=((-1)^{L_0}\s^{1/2})^{-1}\phi.
 $$
 Explicitly 
 $$
 g_\g(a)=-\phi(a),\ a\in\g_{\bar0},\quad  g_\g(a)=\sqrt{-1}\phi(a),\ a\in\g_{\bar1}.
 $$
It is well known that $a\in\g$ is a primary element of $V^k(\g)$ of conformal weight 1 (see e.g. \cite{KB}). Moreover,
the set $\{a^i\}$ strongly and freely generates $V^k(\g)$. It follows that
$
V^k(\g)_0=\C\vac$ and $L_1V^k(\g)_1=0$. By Corollary \ref{hf}, there exists a unique $\phi$--invariant Hermitian form on $V^k(\g)$, given by
\begin{align*}
&\left((a^{i_1}_{j_1})^{m_1}\cdots (a^{i_t}_{j_t})^{m_t}\vac,(a^{i'_1}_{j'_1})^{m'_1}\cdots (a^{i'_{r}}_{j'_{r}})^{m'_{r}}\vac\right)\notag\\
&=\left\langle(g_\g(a^{i_t})_{-j_t})^{m_t}\cdots(g_\g(a^{i_1})_{-j_1})^{m_1}(a^{i'_1}_{j'_1})^{m_1'}\cdots (a^{i'_{r}}_{j'_{r}})^{m'_{r}}\vac\right\rangle.\label{invform}
\end{align*}

If $k\ne-h^\vee$, the vertex algebra $V^k(\g)$ has a unique simple quotient that we denote by $V_k(\g)$. We now discuss the unitarity of $V_k(\g)$. Assume that there is a conjugate linear involution $\phi$ such that the corresponding $\phi$--invariant form on $V_k(\g)$ is positive definite.
If $\g$ is not a Lie algebra then there is $a\in \g_{\bar1}$,  $a\ne 0$. Since $\phi$ is parity preserving we can assume $\phi(a)=a$. Then
$$
(a,a)=(a_{-1}\vac,a_{-1}\vac)=\sqrt{-1}\langle a_{1}a_{-1}\vac\rangle=\sqrt{-1}k(a|a)=0.
$$
If $V_k(\g)$ is unitary, then $a$ is in the maximal ideal of $V^k(\g)$, hence $k=0$ and $V_k(\g)=\C$.

Assume now that $\g$ is a Lie algebra. Since $\phi$ is a conjugate linear involution of $V^k(\g)$ then $\phi_{|\g}$ is a conjugate linear involution of $\g$. Let $\g_\R$ be the corresponding real form.
As shown above, if $a\in\g_\R$, then 
$$
0<(a,a)=(a_{-1}\vac,a_{-1}\vac)=-\langle a_{1}a_{-1}\vac\rangle=-k(a|a),
$$
hence $(\, . \, | \, . \, )_{|\g_\R\times\g_\R}$ is either positive or negative definite.
Let $\overset{\circ}{\omega}_0$ be a compact conjugate linear involution  of $\g$ such that $\phi \overset{\circ}{\omega}_0=\overset{\circ}{\omega}_0\phi$. Let $\k_\R$ be the corresponding compact real form. Then
$$
\g_\R=\g_\R\cap \k_\R\oplus\g_\R\cap( \sqrt{-1} \k_\R).
$$
Since $(\, . \, | \, . \, )_{|\k_\R\times\k_\R}$ is negative definite and $\k_\R\cap\g_\R\ne\{0\}$, we see that $(\, . \, | \, . \, )_{|\g_\R\times\g_\R}$ is negative definite so $\phi=\overset{\circ}{\omega}_0$.
Let $\omega_0$ be the conjugate linear involution of the affinization  $\widehat\g$ of $\g$ which extends $\overset{\circ}{\omega}_0$ as in \S7.6 of \cite{VB}. Then the $\overset{\circ}{\omega}_0$--invariant Hermitian form on $V^k(\g)$ is defined by the property that
$$
(a_jx,y)=-(x,\overset{\circ}{\omega}_0(a)_{-j}y),\ a \in \g.
$$
It follows from Theorem 11.7 of \cite{VB} combined with the formula for $\omega_0$   given at page 103 of loc. cit., that the $\overset{\circ}{\omega}_0$--invariant Hermitian form on $V^k(\g)$ is positive semi--definite if and only if $k\in\ganz_+$.

\subsection{Lattice vertex algebras}\label{LVA} Let $Q$ be a positive definite integral lattice and $V_Q$ be its associated lattice vertex superalgebra (see e.g. \cite[\S 5.4]{KB}). Set $\h=\C\otimes_\Z Q$. Recall that the free bosons vertex operator algebra $M(\h)$ embeds 
in $V_Q=\oplus_{\a\in Q}(M(\h)\otimes \C e^\a)$ with parity $p(M(\h)\otimes e^\a)=(\a|\a)\mod 2$. Let $\{a^1,\ldots,a^l\}$ be an orthogonal basis of $\R\otimes_\Z Q$ and let $\{b^1,\ldots,b^l\}$ be the dual basis of $\h$  with respect to the form $(\, . \, | \, . \, )$ linearly extended from the form on $Q$. The Virasoro vector of $V_Q$ is
$$L=\frac{1}{2}\sum_{i=1}^l:a^ib^i:.$$ 
There are primary elements $e^\a\in V_Q$, $\a\in Q$ of conformal weight $\tfrac{1}{2}(\a|\a)$, such that a basis of  $V_Q$ is 
$$B=\bigcup_{r,\a}\{a^{i_1}_{j_1} \cdots a_{j_r}^{i_r}e^\a\mid (j_1,i_1)<\cdots <(j_r,i_r)\},$$
where, as in Example \ref{bosons}, $(-\mathbb N)\times \{1,\dots,\dim \h\} $ is ordered lexicographically.

Following \cite{DL}, we define a conjugate linear involution $\phi$ of $V_Q$ by setting 
\begin{equation}\label{phil}
\phi(a^{i_1}_{-j_1} \cdots a_{-j_r}^{i_r}e^\a)=(-1)^ra^{i_1}_{-j_1} \cdots a_{-j_r}^{i_r}e^{-\a}.
\end{equation}
It is immediate to see that $\phi(L)=L$. 
Since the conformal weight of $e^\a$ is $\half(\a|\a)$, we have that
$(-1)^{2L_0}\s=I$ so, if $g=((-1)^{L_0}\s^{1/2})^{-1}\phi$, then 
$$
g=(-1)^{L_0+2L_0^2}\phi.
$$
We have
$$
(V_Q)_0=\C\vac,\quad (V_Q)_1=span_\C(\{a^i\}\cup \{e^\a\mid (\a|\a)=2\}).
$$

Since the $a^i$, as well as the $e^\a$, are primary, we see that Corollary \ref{hf} applies. In particular the explicit expression for the $\phi$--invariant Hermitian form is
{\small \begin{align*}
&\left((a^{i_1}_{j_1})^{m_1}\cdots (a^{i_t}_{j_t})^{m_t}e^{\a},(a^{i'_1}_{j'_1})^{m'_1}\cdots (a^{i'_{r}}_{j'_{r}})^{m'_{r}}e^{\be}\right)\notag=
\delta_{\alpha,-\beta}\left\langle((a^{i_t}_{-j_t})^{m_t}\cdots(a^{i_1}_{-j_1})^{m_1}(a^{i'_1}_{j'_1})^{m_1'}\cdots (a^{i'_{r}}_{j'_{r}})^{m'_{r}}\vac\right\rangle.
\end{align*}}
As in Example \ref{bosons} one can check that the basis $B$
is orthogonal and consists of elements of positive norm, so $V_Q$ is unitary.

\section{Invariant Hermitian forms on modules}\label{ihfm}
Let $V$ be a conformal vertex algebra. 
Recall from \eqref{AZ} the definition of $A(z)$.
We let
\begin{equation}\label{omega}
\omega(v)= A(1)v, \ v\in V.
\end{equation}
Assume  that $V$ is $\Gamma/\Z$--graded and let $\Upsilon$ be a $\Gamma/\Z$--grading compatible with $\phi$. 

\begin{proposition}\label{J}  $\omega(J_{\Upsilon})\subseteq J_{-\Upsilon}$ so  $\omega$ induces a conjugate linear  anti--isomorphism of associative algebras
$\omega:Zhu_{\Upsilon}(V)\to Zhu_{-\Upsilon}(V)$.  
Moreover $\omega^2=Id$.
\end{proposition}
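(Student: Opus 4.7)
The proposition asserts two things: (A) $\omega$ induces a conjugate linear anti-isomorphism of associative algebras $Zhu_\Upsilon(V) \to Zhu_{-\Upsilon}(V)$, and (B) $\omega^2 = \mathrm{Id}$. The main tools throughout are the fundamental identity \eqref{conjAz} specialized at $w = 1$,
\begin{equation*}
p(a,b)\,\omega\, Y(a,z)\,\omega^{-1}\,b \;=\; i_{1,z}\, Y\!\left(A(z+1)a,\, -z/(z+1)\right) b,
\end{equation*}
the involutive change of formal variable $y = -z/(1+z)$ (so $z+1 = 1/(1+y)$), and the vertex superalgebra skew-symmetry $Y(u,w)v = p(u,v)\, e^{wT}\, Y(v,-w)\,u$. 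Claim (B) follows immediately from \eqref{Azinverse} evaluated at $z = 1$: the operator $A(1) = e^{L_1}g$ is well defined on $V$ by local nilpotency of $L_1$ on the finite-dimensional conformal weight spaces $V_n$, and the relation gives $A(1)^2 = I$.

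For (A) I verify three sub-claims in order. (a) $\omega(V_\Upsilon) \subseteq V_{-\Upsilon}$: for homogeneous $a \in V^{[\gamma]}$ with $\epsilon_a = 0$ one has $\gamma \in \Delta_a + \Z \subseteq \tfrac{1}{2}\Z$, so $2\gamma \in \Z$; since $g$ preserves both the $\Upsilon$-grading (by $\phi$-compatibility) and $L_0$-eigenspaces while $L_1$ preserves the grading and lowers conformal weight by $1$, each summand $\tfrac{1}{r!}L_1^r g(a)$ of $\omega(a)$ sits in $V^{[\gamma]}$ at conformal weight $\Delta_a - r$, and the integrality $-\gamma - (\Delta_a - r) \in \Z$ forces $\epsilon^{-\Upsilon}_{L_1^r g(a)} = 0$. (b) $\omega(J_\Upsilon) \subseteq J_{-\Upsilon}$: apply $\omega$ to a generator $u = \operatorname{Res}_z z^{-2+\chi(a,b)} Y((1+z)^{\gamma_a}a, z)b$ of $J_\Upsilon$; since the binomial coefficients $\binom{\gamma_a}{j}$ are real, $\omega$ passes through the scalar coefficients, and the fundamental identity rewrites $\omega(Y(a,z)b)$ as $p(a,b)\, i_{1,z}\, Y(A(z+1)a, -z/(z+1))\omega(b)$. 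The change of variable $y = -z/(1+z)$, with Jacobian $dz = -dy/(1+y)^2$ and $z^{-1} = -(1+y)/y$, reorganizes the integrand: the homogeneous components of $A(1/(1+y))a$, whose $-\Upsilon$-$\gamma$-values are matched by the $(1+y)$-exponent produced by the Jacobian, play the role of the new ``$a$'', and $\omega(b)$ plays the role of the new ``$b$'', exhibiting $\omega(u)$ as a linear combination of defining generators of $J_{-\Upsilon}$.

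(c) Anti-multiplicativity $\omega(a*b) \equiv \omega(b)*\omega(a) \pmod{J_{-\Upsilon}}$: applying the fundamental identity and the change of variable $y = -z/(1+z)$ to $a*b = \operatorname{Res}_z z^{-1}(1+z)^{\gamma_a} Y(a,z)b$ produces $\omega(a*b) = p(a,b)\operatorname{Res}_y y^{-1}(1+y)^{-\gamma_a-1} Y(A(1/(1+y))a, y)\omega(b)$. The parallel expression for $\omega(b)*\omega(a)$, obtained by summing the contributions of the homogeneous components of $\omega(b)$, evaluates to $\operatorname{Res}_y y^{-1}(1+y)^{-\Delta_b} Y(A(1/(1+y))b, y)\omega(a)$. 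The two integrands are related via skew-symmetry, which swaps the roles of the arguments inside $Y$ (the parity factor $p(a,b)^2 = 1$ drops out); after a further substitution, the difference $\omega(a*b) - \omega(b)*\omega(a)$ emerges as a sum of residues with $e^{yT}$-correction terms, which lie in $J_{-\Upsilon}$ via the twisted analogue of the classical Zhu congruence $(T+L_0)v \in O(V)$.

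Combining (a)--(c) with (B), $\omega$ descends to a conjugate linear anti-homomorphism $Zhu_\Upsilon(V) \to Zhu_{-\Upsilon}(V)$ squaring to the identity, hence a conjugate linear anti-isomorphism. The principal technical obstacle is (c): the residue change-of-variable bookkeeping together with the identification of all $e^{yT}$-corrections as elements of $J_{-\Upsilon}$ in the general $\Gamma/\Z$-twisted setting require careful combinatorial manipulation throughout.
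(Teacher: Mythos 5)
Your proposal is correct and follows essentially the same route as the paper: conjugate $Y(a,z)$ by $A(1)$ via \eqref{conjAz}, identify the resulting residue as a combination of generators of $J_{-\Upsilon}$, use skew--symmetry together with the Zhu--algebra congruence $\tfrac{1}{r!}L_{-1}^{r}a\equiv\binom{-\Delta_a}{r}a$ for anti--multiplicativity, and deduce $\omega^2=I$ from \eqref{Azinverse} at $z=1$. The only difference is organizational: you perform the change of variable $y=-z/(1+z)$, whereas the paper expands the residues directly and matches binomial coefficients, with the function $\varpi$ playing the role of $\epsilon$ for the opposite grading.
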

\begin{proof} By \eqref{twistedZhu}, we have
\begin{equation}\label{Jomega}
\omega\left(\sum_{j\in \Z_+}\binom{\gamma_a}{j}a_{(-2+\chi(a,b)+j)}b\right)=Res_w(w^{-2+\chi(a,b)}A(z)(Y((1+w)^{\gamma_a}a,w)b)_{|z=1}).
\end{equation}
By \eqref{congsign}
 \begin{align*}
&p(a,b)A(z)Y((1+w)^{\gamma_a}a,w)b=\\
&=p(a,b)e^{zL_1}z^{-2L_0}gY((1+w)^{\gamma_a}a,w)b\\
&=e^{zL_1}z^{-2L_0}Y((1+w)^{\gamma_a}g(a),-w)g(b).
\end{align*}
 By \eqref{conjzL0}
  \begin{align*}
&e^{zL_1}z^{-2L_0}Y((1+w)^{\gamma_a}g(a),w)g(b)=e^{zL_1}Y((1+w)^{\gamma_a}z^{-2L_0}g(a),-w/z^2)z^{-2L_0}g(b).
\end{align*}
By \eqref{conjL1}
\begin{align*}
&e^{zL_1}Y((1+w)^{\gamma_a}z^{-2L_0}g(a),-w/z^2)z^{-2L_0}g(b)\\
&=Y(e^{(z+w)L_1}(z+w)^{-2L_0}(1+w)^{\gamma_a}g(a),\tfrac{-w}{z(z+w)})e^{zL_1}z^{-2L_0}g(b),
\end{align*}
which means that
 \begin{align*}
&p(a,b)A(z)Y((1+w)^{\gamma_a}a,w)b=\\
&=Y(e^{(z+w)L_1}(z+w)^{-2L_0}(1+w)^{\gamma_a}g(a),\tfrac{-w}{z(z+w)})A(z)b,
\end{align*}
so that, since the grading is compatible with $\phi$ and $g(L)=L$,
 \begin{align*}
&(p(a,b)A(z)Y((1+w)^{\gamma_a}a,w)b)_{|z=1}=Y(e^{(1+w)L_1}(1+w)^{-L_0+\epsilon_a}g(a),\tfrac{-w}{(1+w)})\omega(b)\\
&=Y(e^{(1+w)L_1}(1+w)^{-L_0+\epsilon}g(a),\tfrac{-w}{(1+w)})\omega(b)
\end{align*}
Note that
$$
e^{(1+w)L_1}(1+w)^{-L_0+\epsilon}=(1+w)^{-L_0+\epsilon}e^{L_1}.
$$
Indeed, if $a\in V$,
\begin{align*}
&e^{(1+w)L_1}(1+w)^{-L_0+\epsilon}a=e^{(1+w)L_1}(1+w)^{-\D_a+\epsilon_{a}}a=(1+w)^{-\D_a+\epsilon_{a}}\sum_{r\ge 0} (1+w)^r\tfrac{1}{r!}L^r_1a
\\
&=\sum_{r\ge 0} (1+w)^{-\D_a+r+\epsilon_{a}}\tfrac{1}{r!}L^r_1a=(1+w)^{-L_0+\epsilon}\sum_{r\ge 0} \tfrac{1}{r!}L^r_1a\\&=(1+w)^{-L_0+\epsilon}e^{L_1}a.
\end{align*}
Hence,
 \begin{align}\label{calculationabove}
&(p(a,b)A(z)Y((1+w)^{L_0+\epsilon}a,w)b)_{|z=1}=Y((1+w)^{-L_0+\epsilon}e^{L_1}g(a),\tfrac{-w}{(1+w)})\omega(b)\\
&=Y((1+w)^{-L_0+\epsilon}\omega(a),\tfrac{-w}{(1+w)})\omega(b)=(1+w)^{-L_0}Y((1+w)^{\epsilon}\omega(a),-w)(1+w)^{L_0}\omega(b).\notag
\end{align}
Set 
$$
\varpi_a 
=\begin{cases}
-\epsilon_a-1&\text{if $\epsilon_a\ne0$},\\
0&\text{if $\epsilon_a=0$.}
\end{cases}
$$
Note that $\varpi$ is the function $\epsilon$ defined in Section \ref{Section1}  corresponding to the grading $-\Upsilon$.

Since $\epsilon_a+\epsilon_b\in\Z$, we have that $\varpi_a=-\chi(a,b)-\epsilon_a$ and $\chi(a,b)=1$ if and only if $\varpi_a+\varpi_b\le-1$. It follows that
 \begin{align*}
&Res_w(w^{-2+\chi(a,b)}(1+w)^{-L_0}Y((1+w)^{\epsilon}a,-w)(1+w)^{L_0}b)\\&=Res_w(w^{-2+\chi(a,b)}\sum_{n,j}(-1)^n\binom{-\D_{a}+\epsilon_a+n+1}{j}(a_{(n)}b)w^{-n-1+j})\\
&=\sum_{j}(-1)^j\binom{-\D_{a}+\epsilon_a+j+\chi(a,b)-1}{j}(a_{(-2+\chi(a,b)+j)}b)\\
&=\sum_{j}\binom{\D_{a}-\epsilon_a-\chi(a,b)}{j}(a_{(-2+\chi(a,b)+j)}b)\\
&=\sum_{j}\binom{\D_{a}+\varpi_a}{j}(a_{(-2+\chi(a,b)+j)}b)=Res_w(w^{-2+\chi(a,b)}Y((1+w)^{L_0+\varpi}a,w)b.
\end{align*}
Since $\Upsilon$ is compatible with $\phi$, we have that $\epsilon_{\omega(a)}=\epsilon_a$ (hence $\chi(a,b)=\chi(\omega(a),\omega(b)$). We find that 
\begin{align*}
&Res_w(w^{-2+\chi(a,b)}A(z)(Y((1+w)^{L_0}a,w)b)_{|z=1})\\
&=p(a,b)Res_w(w^{-2+\chi(\omega(a),\omega(b))}Y((1+w)^{L_0+\varpi}\omega(a),w)\omega(b)),
\end{align*}
hence, by \eqref{Jomega},
$
\omega(J_{\Upsilon})\subset J_{-\Upsilon}
$.

Next we prove that $\omega$ is an anti-automorphism. If $a\in V_{\Upsilon}$ (cf. \eqref{vupsilon}) then $\epsilon_a=\epsilon_{\omega(a)}=0$, thus, if $a,b\in V_{\Upsilon}$, by
\eqref{calculationabove},
 \begin{align*}
 &p(a,b) \omega(a*b)=p(a,b)Res_w(w^{-1}A(z)(Y((1+w)^{L_0}a,w)b)_{|z=1})\\
&=Res_ww^{-1}(1+w)^{-L_0}Y(\omega(a),-w)(1+w)^{L_0}\omega(b).
\end{align*}
 Now use  skew--symmetry
$
  Y (a,z)b = p(a,b) e^{zL_{-1}} Y(b,-z)a
$ (see e.g. \cite{KB}) 
to get
 \begin{align*}
 &\omega(a*b)\\
 &=Res_w(w^{-1}(1+w)^{-L_0}e^{-wL_{-1}}Y((1+w)^{L_0}\omega(b),w)\omega(a))\\
 &=Res_w\sum_{n,j,r}(-1)^r\binom{-\D_{\omega(a)}+n+1-r}{j}\tfrac{1}{r!}L_{-1}^r(\omega(b)_{(n)}\omega(a))w^{-n-2+j+r})\\
 &=\sum_{j,r}(-1)^r\binom{-\D_{\omega(a)}+j}{j}\tfrac{1}{r!}L_{-1}^r(\omega(b)_{(-1+j+r)}\omega(a))\\
 &=\sum_{r,j}(-1)^r\binom{-\D_{\omega(a)}+j}{j}\binom{-\D_{\omega(b)}-\D_{\omega(a)}+j+r}{r}(\omega(b)_{(-1+j+r)}\omega(a)\\
   &=\sum_{r,j}(-1)^{r+j}\binom{\D_{\omega(a)}-1}{j}\binom{-\D_{\omega(b)}-\D_{\omega(a)}+j+r}{r}(\omega(b)_{(-1+j+r)}\omega(a))
 \end{align*}
 \begin{align*}
  &=\sum_{r,j}(-1)^{r+j}\binom{\D_{\omega(a)}-1}{j}\binom{-\D_{\omega(b)}-\D_{\omega(a)}+j+r}{r}(\omega(b)_{(-1+j+r)}\omega(a))\\
 &=\sum_{n\ge r}(-1)^{n}\binom{\D_{\omega(a)}-1}{n-r}\binom{-\D_{\omega(b)}-\D_{\omega(a)}+n}{r}(\omega(b)_{(-1+n)}\omega(a))\\
&=\sum_{n}(-1)^{n}\binom{-\D_{\omega(b)}+n-1}{n}(\omega(b)_{(-1+n)}\omega(a)) \\
&=\sum_{n}\binom{\D_{\omega(b)}}{n}(\omega(b)_{(-1+n)}\omega(a))=\omega(b)*\omega(a).
\end{align*}
We used the fact that in $Zhu_{\Upsilon} V$ we have (cf. \cite[(2.35)]{DK})
 $$\frac{1}{r!}L_{-1}^ra=\binom{-\D_a}{r}a.$$
 and the Vandermonde identity on binomial coefficients.
 
Finally, by \eqref{Azinverse},
$$
\omega^2(a)=A(1)^2 a =a.
$$
hence $\omega^2=I$.
\end{proof}
\begin{remark} We now make explicit the map $\omega$ in the examples dealt with in Section \ref{S3}. In general, if $a$ is quasi-primary,
 we have, by \eqref{omega}
\begin{equation}\label{omegaqp}
\omega(a)=g(a).
\end{equation}
\begin{enumerate}
\item Let  $V=F$ be  the fermionic vertex algebra associated to a superspace $A$ as in Example \ref{ff}. According to \cite[Theorem 3.25]{DK},  $Zhu_{L_0}(V)$ is the Clifford algebra of $A$, i.e.  the unital associative algebra generated by $A$ with relations
$$[a,b]=(a|b),\quad a,b\in A.$$
 Then, according to \eqref{omegaqp} and \eqref{phig}, 
\begin{equation}
\omega(a)=-\sqrt{-1} \phi(a),\ a\in A_{\bar0},\quad \omega(a)=-\phi( a), \ a\in A_{\bar1}.
\end{equation}
\item  Let  $V=M(\h)$ be  the vertex algebra of superbosons associated to a superspace $\h$ as in Example \ref{bosons}. According to \cite[Theorem 3.25]{DK},  $Zhu_{L_0}(V)$ is the (super)symmetric  algebra of $A$.
 Then, according to \eqref{omegaqp} and \eqref{phigh}, 
\begin{equation*}
\omega(a)=- \phi(a),\ a\in A_{\bar0},\quad \omega(a)=\sqrt{-1}\phi( a), \ a\in A_{\bar1}.
\end{equation*}
\item If $V=V^k(\g)$ (cf. Example \ref{Affine}), then  $Zhu_{L_0}(V)=U(\g)$ (see e.g. \cite{DK}). Then, according to \eqref{omegaqp}, 
$$
\omega(a)=-\phi(a),\ a\in\g_{\bar0},\quad  \omega(a)=\sqrt{-1}\phi(a),\ a\in\g_{\bar1}.
 $$
\item If $V=V_Q$ is a lattice vertex algebra (cf. Example \ref{LVA}),  formulas \eqref{phil} and \eqref{omegaqp} give
$$\omega(e^\a)=(-1)^{\frac{(\a|\a)((\a|\a)+1)}{2}}e^{-\a},\quad \omega(h)=-\bar h,\,h\in\h.$$
Here $\bar h $ is the  conjugatie  of $h\in\h$ with respect to $\R\otimes_\Z Q$. 
If $Q$ is even,  $Zhu_{L_0}(V_Q)$ has been proved in \cite{DLM} to be isomorphic  to a generalized Smith algebra, denoted there by $\overline{A(Q)}$. The algebra $\overline{A(Q)}$  is generated by elements 
$E_\a,\,\a\in Q, h\in \h$, and the explicit formula for the isomorphism $Zhu_{L_0} V_Q\cong \overline{A(Q)}$ given in \cite[Theorem 3.4]{DLM} implies that 
$$\omega(E_\a)=(-1)^{\frac{(\a|\a)}{2}}E_{-\a},\quad \omega(h)=-\bar h,\,\ h\in\h,$$
is a conjugate linear anti--automorphism of $\overline{A(Q)}$.
\end{enumerate}
\end{remark}
\vskip20pt
\begin{definition} Let $R$ be an associative superalgebra over $\C$ with a conjugate linear anti--involution $\omega$, and let 
$M$ be an $R$--module. A  Hermitian form $(\,\cdot \,,\,\cdot\,)$ on $M$ is called $\omega$--invariant if 
$$
(\omega(a)m_1,m_2)=(m_1,a\,m_2),\ a\in R,\,m_1,m_2\in M.
$$
\end{definition}
Assume for the rest of this Section that $\Gamma=\Z$ or $\Gamma=\tfrac{1}{2}\Z$, so that $Zhu_{\Upsilon}=Zhu_{-\Upsilon}$. The following is the natural extension of Definition \ref{invariant_form} to $V$--modules. 
\begin{definition}\label{invariant_form_on_modules} Let $\phi$ be a conjugate linear involution of the vertex algebra $V$.
 A Hermitian form $(\, \cdot \,\, , \, \cdot\, )$ on a $\Upsilon$--twisted $V$--module $M$ is called  $\phi$--invariant if, for all $v\in V$, 
\begin{equation}\label{iM}
( m_1, Y_M(a, z)m_2)=( 
Y_M(A(z)a, z^{-1})m_1,m_2).
\end{equation}
\end{definition}
From now on we assume that the module $M$ is a positive energy module (see Definition \ref{pe}).
\begin{remark} The space of  $\phi$--invariant Hermitian forms on $M$ is linearly isomorphic to
$$\{\Theta\in Hom_V(M,M^\dagger)\mid \langle\Theta(m_1),m_2\rangle=\overline{\langle\Theta(m_2),m_1\rangle}\}$$
Indeed, given   $\Theta:M\to M^\dagger$  a $V$--module homomorphism, then  setting, for $m_1,m_2\in M$
$$
(m_1,m_2)_\Theta=\langle\Theta(m_2),m_1\rangle
$$
defines a $\phi$--invariant hermitian form on $M$. In fact
\begin{align*}
( m_1, Y_M(a, z)m_2)_{\Theta}&=\langle\Theta(Y_M(a, z)m_2),m_1\rangle=\langle Y_{M^\dagger}(a, z)\Theta(m_2),m_1\rangle\\
&=\langle \Theta(m_2), Y_M(A(z)v, z^{-1})m_1\rangle=(Y_M(A(z)v, z^{-1})m_1,m_2)_\Theta.
\end{align*}
Conversely, let $F:M\times M\to \C$ be a  $\phi$--invariant hermitian form; then $\Theta_F:M\to M^\dagger$ defined by 
$\langle\Theta_F(m_1),m_2\rangle=F(m_2,m_1)$ is a $V$--homomorphism from $M$ to $M^\dagger$. Indeed
\begin{align*}\langle\Theta_F(Y_M(a,z) m_1),m_2\rangle&=F(m_2,Y_M(a,z) m_1)=F(Y_M(A(z)a, z^{-1})m_2,m_1)\\&=\langle \Theta_F(m_1), Y_M(A(z)a, z^{-1})m_2\rangle=
\langle Y_{M^\dagger}(a, z)\Theta_F(m_1), m_2\rangle.
\end{align*}
\end{remark}

\vskip20pt

Recall that a positive energy $\Upsilon$--twisted $V$--module $M$ is said quasi--irreducible if it is generated by $M_0$ and there are no non-zero submodules $N\subset M$ such that $N\cap M_0=\{0\}$.

By \cite[Lemma 2.2]{DK}, if $M$ is a positive energy $\Upsilon$-twisted $V$--module, then the map $a\mapsto (a_0^M)_{|M_0}$ descends to define a $Zhu_{\Upsilon}V$--module structure on $M_0$.

\begin{lemma}\label{3.1}
If $M$ is quasi--irreducible then $M^\dagger$ is quasi--irreducible.
\end{lemma}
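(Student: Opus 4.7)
The plan is to exploit the natural pairing $\langle\cdot,\cdot\rangle\colon M^\dagger\times M\to\C$, which is diagonal with respect to the gradings ($\langle M^\dagger_n,M_k\rangle=0$ for $n\ne k$) and restricts on each grade to the non-degenerate duality $M^\dagger_n\times M_n\to\C$. The idea is to pass $V$-submodules of $M^\dagger$ to $V$-submodules of $M$ by taking annihilators, and then conclude from the quasi-irreducibility of $M$.

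The key technical step is to show that, for any $V$-submodule $N\subseteq M^\dagger$, the annihilator
$$N^\perp:=\{m\in M\mid\langle n',m\rangle=0\text{ for all }n'\in N\}$$
is itself a $V$-submodule of $M$. Fix $m\in N^\perp$, $a\in V$, and $n'\in N$. Combining the defining formula \eqref{action} with the identity $A(z^{-1})=A(z)^{-1}$ from \eqref{Azinverse}, one can rewrite
$$\langle n',Y_M(a,z)m\rangle=\langle Y_{M^\dagger}(A(z)a,z^{-1})n',m\rangle.$$
Now $A(z)a=\sum_{t\ge 0}\tfrac{1}{t!}L_1^tg(a)\,z^{t-2\Delta_a}$ is a Laurent polynomial in $z$ with only finitely many non-zero terms, since $L_1^tg(a)$ has conformal weight $\Delta_a-t$ and the grading of $V$ is bounded below. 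Hence the right-hand side is a finite linear combination of pairings $\langle(L_1^tg(a))^{M^\dagger}_k n',m\rangle$; each $(L_1^tg(a))^{M^\dagger}_k n'$ lies in $N$ because $N$ is a $V$-submodule, so each such pairing vanishes. This gives $a^M_k m\in N^\perp$ for every $k$.

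Both defining properties of the quasi-irreducibility of $M^\dagger$ now follow quickly. For generation, let $N$ be the $V$-submodule of $M^\dagger$ generated by $M^\dagger_0$. Since $M^\dagger_0$ is the full conjugate linear dual of $M_0$, we have $N^\perp\cap M_0\subseteq(M^\dagger_0)^\perp\cap M_0=0$, whence $N^\perp=0$ by quasi-irreducibility of $M$; non-degeneracy of the pairing on each graded piece (cf.\ the finite-dimensional hypothesis in Theorem \ref{Contragredient}(b)) then promotes this to $N=M^\dagger$. For the second property, suppose $N'\subseteq M^\dagger$ is a non-zero $V$-submodule with $N'\cap M^\dagger_0=0$; since $V$-submodules are $L_0$-stable and hence graded, $N'\subseteq\bigoplus_{n\ge 1}M^\dagger_n$, so graded orthogonality yields $M_0\subseteq N'^\perp$. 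Quasi-irreducibility of $M$ then forces $N'^\perp=M$, which by non-degeneracy implies $N'=0$, contradicting $N'\ne 0$.

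The main obstacle is the verification that $N^\perp$ is a $V$-submodule: one must exploit the explicit polynomial form of $A(z)a$ to ensure the coefficient-wise argument is well-defined, and one must use that $V$-submodules of a positive-energy module inherit the grading. Once these points are established, the two conclusions are immediate consequences of the quasi-irreducibility of $M$ together with the non-degeneracy of the $M^\dagger$-$M$ pairing.
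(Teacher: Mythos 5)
Your proof is correct and follows essentially the same route as the paper: both arguments pass from submodules $N\subseteq M^\dagger$ to the annihilator submodule $N^\perp\subseteq M$, use that $M_0^\dagger$ separates points of $M_0$ to kill $(N^\perp)_0$ in the generation step, and use that a submodule of $M$ containing $M_0$ must be all of $M$ in the second step. The only difference is that you spell out the verification that $N^\perp$ is a $V$-submodule (via \eqref{action}, \eqref{Azinverse} and the finiteness of $A(z)a$), which the paper leaves implicit.
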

\begin{proof}
Set $N=VM_0^\dagger$. Then $N^\perp$ is graded and $\langle F, v\rangle=0$ for all $v\in N^\perp_0$, $F\in M_0^\dagger$. This implies that $N_0^\perp=\{0\}$, so $N^\perp=\{0\}$, hence $N=M^\dagger$. 

If $N$ is a graded submodule of $M^\dagger$ with $N_0=\{0\}$ then $N^\perp$ is a graded submodule of $M$ containing $M_0$. Since $M_0$ generates $M$, it follows that $N^\perp=M$ hence $N=\{0\}$.
\end{proof}
\begin{proposition}\label{wtm}
Let $M$ be a $\Upsilon$-twisted positive--energy $V$--module generated by $M_0$. Then
the space of  $\phi$--invariant Hermitian forms on $M$ is linearly isomorphic to the set of $\omega$--invariant Hermitian forms on
the $Zhu_{\Upsilon}V$--module $M_0$.
\end{proposition}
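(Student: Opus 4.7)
\emph{Plan.} The plan is to construct mutually inverse linear maps between $\phi$-invariant Hermitian forms on $M$ and $\omega$-invariant Hermitian forms on the $Zhu_\Upsilon V$-module $M_0$: restriction to $M_0\times M_0$ one direction, extension to $M$ the other. Throughout, the workhorse is the bijection stated in the Remark preceding the proposition between $\phi$-invariant Hermitian forms on $M$ and (Hermitian-symmetric) $V$-module homomorphisms $\Theta\colon M\to M^\dagger$, together with Theorem \ref{Contragredient}, which produces $M^\dagger$.

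For the restriction direction, applying \eqref{iM} with $a=L$ (quasiprimary, $g(L)=L$) shows $(L_0m_1,m_2)=(m_1,L_0m_2)$, whence distinct $L_0$-eigenspaces are orthogonal and the restriction to $M_0\times M_0$ is Hermitian. To check $\omega$-invariance, fix $a\in V_\Upsilon$ and $m_1,m_2\in M_0$. Expanding $A(z)a=\sum_r\tfrac{z^{r-2\Delta_a}}{r!}L_1^rg(a)$ and substituting into \eqref{iM}, then extracting the coefficient of $z^{-\Delta_a}$ on both sides and using orthogonality of the $M_j$ to discard every mode that leaves $M_0$, one obtains
\begin{equation*}
(m_1,a^M_0m_2)=\sum_r\tfrac{1}{r!}\bigl((L_1^rg(a))^M_0m_1,m_2\bigr)=(\omega(a)^M_0m_1,m_2),
\end{equation*}
because $\omega(a)=A(1)a=e^{L_1}g(a)$. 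Since the Zhu action of $[a]$ on $M_0$ is $a^M_0$, this is exactly $\omega$-invariance of the restricted form. Injectivity of the restriction map follows by a similar reduction: since $M$ is generated by $M_0$, iterated use of \eqref{iM} together with Hermitian symmetry expresses any pairing $(m_1,m_2)$ in terms of pairings in $M_0\times M_0$.

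The main obstacle is surjectivity. Given an $\omega$-invariant Hermitian form $(\cdot,\cdot)_0$ on $M_0$, define $\Theta_0\colon M_0\to(M^\dagger)_0=M_0^\dagger$ by $\langle\Theta_0(m_1),m_2\rangle=(m_2,m_1)_0$. Formula \eqref{undagger} identifies the $Zhu_\Upsilon V$-action on $(M^\dagger)_0$ coming from $M^\dagger$ with the conjugate dual of the action of $\omega(a)^M_0$ on $M_0$, so $\omega$-invariance of $(\cdot,\cdot)_0$ is equivalent to $\Theta_0$ being a $Zhu_\Upsilon V$-module homomorphism. I would then lift $\Theta_0$ to a $V$-module map $\widetilde\Theta\colon\widetilde M\to M^\dagger$ from the universal induced module $\widetilde M$ with top $M_0$, by the standard adjunction between positive-energy $V$-modules and $Zhu_\Upsilon V$-modules. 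Writing $M=\widetilde M/K$ with $K\cap M_0=0$ (as $M_0\hookrightarrow M$), the remaining task is to show $\widetilde\Theta(K)=0$, so that $\widetilde\Theta$ descends to the required $\Theta\colon M\to M^\dagger$. This is the crux: one argues that the kernel of the $\phi$-invariant form on $\widetilde M$ corresponding to $\widetilde\Theta$ is the maximal submodule of $\widetilde M$ whose intersection with $M_0$ lies in the radical of $(\cdot,\cdot)_0$, so $K$, being a submodule meeting $M_0$ trivially, is contained in this kernel. Hermitian symmetry of the resulting form on $M$ is inherited from that of $(\cdot,\cdot)_0$ via \eqref{iM} and \eqref{Azinverse}, exactly as in the closing steps of the proof of Theorem \ref{wt}.
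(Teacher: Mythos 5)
Your proof is correct, and it shares the paper's overall architecture: restrict to $M_0\times M_0$ in one direction, extend via the contragredient module $M^\dagger$ and the Zhu correspondence in the other. The implementation of the extension step, however, is genuinely different. The paper first passes to the quotient $M/N$ by the maximal submodule $N$ meeting $M_0$ trivially; this quotient is quasi--irreducible, and Lemma \ref{3.1} (quasi--irreducibility of the contragredient) together with Theorem 2.30 of \cite{DK} then lifts the $Zhu_{\Upsilon}V$--map $\Phi_0\colon M_0\to M_0^\dagger$ directly to a $V$--module map $M/N\to (M/N)^\dagger$, after which the form is pulled back to $M$, so that $N$ automatically lies in its kernel. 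You instead go up to the universal induced module $\widetilde M$, lift $\Theta_0$ there by adjunction, and descend by showing that the kernel $K$ of $\widetilde M\twoheadrightarrow M$ is annihilated because $K\cap M_0=0$ and the radical of the induced invariant form contains every submodule meeting $M_0$ inside the radical of $(\,\cdot\,,\,\cdot\,)_0$; this descent is the same mode--shifting reduction that gives injectivity of the restriction map, which you, unlike the paper, spell out explicitly (the paper needs it implicitly at the very end, to conclude $(\,\cdot\,,\,\cdot\,)'=(\,\cdot\,,\,\cdot\,)$ from agreement on $M_0$). The trade--off: the paper needs Lemma \ref{3.1} but never mentions $\widetilde M$; you need the existence and universal property of $\widetilde M$, and you should note that $\widetilde\Theta$ lands in $M^\dagger$, which must be composed with the injection $M^\dagger\hookrightarrow\widetilde M^\dagger$ before one can speak of an invariant form on $\widetilde M$. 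Your explicit coefficient extraction in the restriction direction is also a welcome expansion of the paper's terse appeal to Proposition \ref{J}. Both arguments settle Hermitian symmetry the same way.
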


\begin{proof}If $(\, \cdot \,\, , \, \cdot\, )$ is a $\phi$--invariant Hermitian form on $M$, then $(\, \cdot \,\, , \, \cdot\, )_0=(\, \cdot \,\, , \, \cdot\, )_{|M_0\times M_0}$ is a $\omega$--invariant Hermitian form on $M_0$ by Proposition \ref{J}.

Let $(\, \cdot \,\, , \, \cdot\, )_0$ be a $\omega$--invariant Hermitian form on the $Zhu_{\Upsilon}V$--module $M_0$. 
Let $N$ be the sum of all graded submodules $N'$ of $M$ such that $N'\cap M_0=\{0\}$. Then $M/N$ is quasi--irreducible and $(M/N)_0=M_0$.
Define $\Phi_0:M_0\to M_0^\dagger$ by setting $\Phi_0(m_1)(m_2)=(m_2,m_1)_0$. Since the form $(\, \cdot \,\, , \, \cdot\, )_0$ is $\omega$--invariant, we have
\begin{align*}
\Phi_0(v^M_0m_1)(m_2)&=(m_2,v^M_0m_1)_0=(\omega(v)^M_0m_2,m_1)_0=\Phi_0(m_1)(\omega(v)^M_0m_2)\\&=(v^{M^\dagger}_0\Phi_0)(m_2)(m_1),
\end{align*}
so $\Phi_0$ is a $Zhu_{\Upsilon}(V)$--module map between $M_0$ and $M_0^\dagger$. By Lemma \ref{3.1} and \cite[Theorem 2.30]{DK}, there is a $V$--module map $\Phi:M/N\to (M/N)^\dagger$ such that $\Phi_{|M_0}=\Phi_0$. Define, for $m_1,m_2\in M$,
$$
(m_1,m_2)=\Phi(m_2+N)(m_1+N).
$$
It is clear that the form $(\, \cdot \,\, , \, \cdot\, )$ is $\phi$--invariant and that $(\, \cdot \,\, , \, \cdot\, )_0=(\, \cdot \,\, , \, \cdot\, )_{|M_0\times M_0}$. It remains to check that the form is Hermitian.

Consider the form $(\, \cdot \,\, , \, \cdot\, )'$ defined by $(m_1,m_2)'=\overline{(m_2,m_1)}$.
Note that $(\, \cdot \,\, , \, \cdot\, )'$ is $\phi$--invariant:
\begin{align*}
(m_1,Y_M(a,z)m_2)'&=\overline{(Y_M(a,z)m_2,m_1)}=\overline{(Y_M(A(z)A(z^{-1})a,z)m_2,m_1)}\\&=\overline{(m_2,Y_M(A(z)a,z^{-1})m_1)}=(Y_M(A(z)a,z^{-1})m_1,m_2)'.
\end{align*}
Since $(\, \cdot \,\, , \, \cdot\, )_0$ is Hermitian, then 
$$
(\, \cdot \,\, , \, \cdot\, )'_{|M_0\times M_0}=(\, \cdot \,\, , \, \cdot\, )_{|M_0\times M_0},
$$
hence $(\, \cdot \,\, , \, \cdot\, )'=(\, \cdot \,\, , \, \cdot\, )$.
\end{proof}

\begin{remark}\label{sr}
Theorem \ref{wt} is a consequence of Proposition \ref{wtm}. Indeed, the space of $\omega$--invariant Hermitian forms on $V_0$ is linearly isomorphic to $(V_0/L_1V_1)^\dagger$. The isomorphism is defined by mapping $(\, \cdot \,\, , \, \cdot\, )_0$ to $F_{(\, \cdot \,\, , \, \cdot\, )_0}$ where $F_{(\, \cdot \,\, , \, \cdot\, )_0}(v)=(v,\vac)_0$. To prove that this map is well defined, let us check that $F_{(\, \cdot \,\, , \, \cdot\, )_0}(L_1V_1)=0$. If $v\in V_1$, then
$$
L_1v=(L_1v)_0\vac=(v_0+(L_1v)_0)\vac)=\omega( g(v))_0\vac,
$$
so 
$$
F_{(\, \cdot \,\, , \, \cdot\, )_0}(L_1v)=(L_1v,\vac)_0=((\omega(g(v))_0\vac,\vac)_0=-(\vac, g(v)_0\vac)_0=0.
$$
The inverse is the map $F\mapsto (\, \cdot \,\, , \, \cdot\, )_F$, where $(v,w)_F=F(\omega(w)_0 v)$. Let us check that $(\, \cdot \,\, , \, \cdot\, )_F$ is $\omega$--invariant. If $u,v\in V_0$ and $w\in V_{\Z}$, then
$(u,w_0v)_F=F(\omega(w_0v)_0 u)$ and $(\omega(w)_0u,v)_F=F(\omega(v)_0 \omega(w)_0u)$.
Viewing $F$ as an element of $V^\dagger$, we observe that
$$
F(\omega(w_0v)_0 u)=((w_0v)^{V^\dagger}_0F)(u),\ F(\omega(v)_0 \omega(w)_0 u)=(w^{V^\dagger}_0v^{V^\dagger}_0F)(u),
$$
so it is enough to check that
\begin{equation}\label{daggermod}
(w_0v)^{V^\dagger}_0F=w^{V^\dagger}_0v^{V\dagger}_0F.
\end{equation}
Observe that, since $\langle F, L_1V_1\rangle=0$, $L_{-1}F=0$, $F$ is a vacuum--like element of $V^\dagger$. It follows from Proposition 3.4 of \cite{Li} that the map $\Phi:V\to V^\dagger$ defined by  $\Phi(a)= a^{V^\dagger}_{(-1)}F$ is a $V$--module map. In particular, 
$$
\Phi(a_{(n)}b)= a^{V^\dagger}_{(n)}\Phi(b)=a^{V^\dagger}_{(n)}(b^{V^\dagger}_{(-1)}F).
$$
On the other hand
$$
\Phi(a_{(n)}b)= (a_{(n)}b)^{V^\dagger}_{(-1)}F
$$
so 
$$
a^{V^\dagger}_{(n)}(b^{V^\dagger}_{(-1)}F)=(a_{(n)}b)^{V^\dagger}_{(-1)}F.
$$
Since $\D_v=\D_{w_0v}=0$, we find $v^{V^\dagger}_{(-1)}F=v^{V^\dagger}_0F$ and $(w_0v)^{V^\dagger}_{(-1)}F=(w_0v)^{V^\dagger}_{0}F$, so \eqref{daggermod} follows.
\end{remark}

\section{Invariant Hermitian forms on $W$--algebras}\label{ihfw}

We adopt the setting and notation of Section\ 1 of \cite{KW1}. We let  $W^k(\g,x,f)$ be the universal $W$--algebra of level $k\in \R$ associated to the datum  $(\fg ,x,f)$, where  $\fg$ is a
simple finite--dimensional Lie superalgebra with a reductive even part and a non-zero even
invariant supersymmetric bilinear form $(. \, | \, .)$, $x$ is an
$\ad$--diagonalizable element of $\fg$ with eigenvalues in
$\tfrac{1}{2}\ZZ$, $f$ is an even element of $\fg$ such that
$[x,f]=-f$ and the eigenvalues of $\ad x$ on the centralizer
$\fg^f$ of $f$ in $\fg$ are non-positive. Recall  that we are assuming that $a\in\R$ for $\g=D(2,1;a)$.  We call the datum $(\fg ,x,f)$ a \emph{Dynkin datum} if there is a $sl(2)$--triple $\{f,h,e\}$ containing $f$ and $x=\tfrac{1}{2} h$.

Let 
\begin{equation}\label{gg}\g=\bigoplus\limits_{j\in \frac{1}{2}\Z}\g_j
\end{equation}
 be the grading of $\g$ by $ad(x)$--eigenspaces.
We 
assume that $k\ne -h^\vee$ so that $W^k(\g,x,f)$ has a Virasoro vector.
Then
$W^k(\g,x,f)$ is 
a conformal vertex algebra in the sense of Definition \ref{svoa}. 
\begin{remark} It is easy to show that a datum  $(\g,x,f)$ as above is independent, up to isomorphism, from the choice of $f$, hence we may use notation $W^k(\g,x)$. 
\end{remark}

\begin{remark}\label{minimalW}
An important special case is when $f$ is a minimal nilpotent element 
of the even part of $\g$, i.e. $f$ is the root vector  $e_{-\theta}$ corresponding to a maximal even root $\theta$. In this case, the invariant bilinear form $(. \, | \, .)$ is normalized so that $(\theta | \theta)=2$. Choose the root vector $e_{\theta}\in\g_{\theta}$ in such a way that $(e_\theta|e_{-\theta})=\tfrac{1}{2}$. Setting
  $x=[e_\theta,e_{-\theta}]$, it is clear that  $(\g,x,e_{-\theta})$ is a Dynkin datum.  Identifying the Cartan subalgebra $\h$ with its dual using $(. \, | \, .)$, one has $x=\theta/2$.  The algebra 
$W^k(\g,\theta/2)$ is called a {\it minimal} $W$-algebra.
\end{remark}
\begin{lemma}\label{descend}
Let $\phi$ be a conjugate linear involution of $\g$ such that
\begin{equation}\label{defphi}
\phi(f)=f,\quad \phi(x)=x.
\end{equation}
Assume also, as in Subsection \ref{Affine}, that 
\begin{equation}\label{real}
\overline{(\phi(X)|\phi(Y))}=(X|Y),
\end{equation}
so that $\phi$ extends to a conjugate linear involution of $V^k(\g)$. Then $\phi$ descends to an involution of the vertex algebra $W^k(\g,x,f)$.
\end{lemma}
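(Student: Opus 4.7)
The plan is to realize $W^k(\g,x,f)$ as the $0$--th cohomology of the BRST complex of \cite{KRW, KW1}, to extend $\phi$ to that complex as a conjugate linear vertex algebra involution, to verify that the extension commutes with the BRST differential $d$, and then to conclude by passing to cohomology.

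Recall that $W^k(\g,x,f)=H^0(C,d)$, where $C=V^k(\g)\otimes F^{ch}(\g_+)\otimes F^{ne}(\g_{1/2})$. Here $F^{ch}(\g_+)$ is the charged fermion vertex algebra on $\g_+\oplus\g_+^*$ with the natural pairing, and $F^{ne}(\g_{1/2})$ is the neutral fermion vertex algebra attached to $\g_{1/2}$ with the bilinear form $\langle a,b\rangle_{ne}=(f|[a,b])$. Since $\phi(x)=x$, $\phi$ preserves the grading \eqref{gg}, so it restricts to conjugate linear involutions of $\g_+$ and $\g_{1/2}$. I would extend $\phi$ to $F^{ch}(\g_+)$ via $\phi|_{\g_+}$ on the ghost generators valued in $\g_+$ and via the transpose--dual involution on those valued in $\g_+^*$; this is compatible with the defining pairing. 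On $F^{ne}(\g_{1/2})$ I would invoke Subsection \ref{ff}, whose only nontrivial hypothesis is the reality condition $\overline{\langle a,b\rangle_{ne}}=\langle\phi(a),\phi(b)\rangle_{ne}$, and this follows at once from
\[
\overline{(f|[a,b])}=(\phi(f)\,|\,\phi([a,b]))=(f|[\phi(a),\phi(b)]),
\]
using \eqref{real}, the assumption $\phi(f)=f$, and that $\phi$ is a Lie superalgebra homomorphism. This produces a conjugate linear vertex algebra involution $\phi$ of $C$ restricting to the given one on $V^k(\g)$.

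The crucial step is to prove $\phi(Q)=Q$ for the BRST charge $Q$, which would give $[\phi,d]=0$. The charge $Q$ is built universally from $f$, from tensors coming from the bracket and the form $(\,\cdot\,|\,\cdot\,)$ on $\g$, and from the fermionic generators: it consists of a standard cubic piece determined by the Lie superalgebra structure on $\g_+$, a linear term encoding $f$, and a coupling piece involving the neutral fermions attached to $\g_{1/2}$. Because $\phi$ preserves $x$, $f$, the bracket, and the form on $\g$ in the conjugate--linear sense of \eqref{real}, each summand of $Q$ is manifestly $\phi$--invariant: the conjugation of the complex structure constants is cancelled by the conjugate--linear action of $\phi$ on the fermionic generators, exactly as in the affine case of Subsection \ref{Affine}.

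Once $[\phi,d]=0$ is established, $\phi$ descends to a conjugate linear vertex algebra involution of $H^0(C,d)=W^k(\g,x,f)$, and it is an involution because it already is on $C$. The main technical obstacle is the summand--by--summand check that $\phi(Q)=Q$: one must track the conjugate--linear action of $\phi$ on a chosen basis against the complex conjugation of the structure constants. This goes through because $Q$ is independent of the basis and natural in the datum $(\g,x,f,(\,\cdot\,|\,\cdot\,))$, every piece of which is preserved by $\phi$ under the hypotheses \eqref{defphi} and \eqref{real}.
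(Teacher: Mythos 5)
Your proposal follows essentially the same route as the paper's proof: extend $\phi$ to the BRST complex $V^k(\g)\otimes F(A_{ch})\otimes F(A_{ne})$ by checking that the charged pairing and the neutral form $\langle a,b\rangle_{ne}=(f|[a,b])$ are preserved in the conjugate--linear sense (using $\phi(f)=f$ and \eqref{real}), observe that the differential is fixed by $\phi$, and pass to cohomology. The paper's proof is identical in structure (it likewise dispatches the invariance of $d$ with ``it is easy to see''), so there is nothing substantive to distinguish the two arguments.
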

\begin{proof}
Let $A$ be the superspace $\Pi(\sum_{j>0}\g_j)$ where $\Pi$ is the reverse parity functor. Let $A^*$ be the linear dual of $A$ and set $A_{ch}=A\oplus A^*$. Define the form $\langle\,\cdot\,,\cdot\,\rangle_{ch}$ on $A_{ch}$ by setting, for $a,b\in A$, $a',b' \in A^*$,
$$
\langle a,b\rangle_{ch}=\langle a',b'\rangle_{ch}=0,\quad\langle a,b'\rangle_{ch}=b'(a),\quad \langle b',a\rangle_{ch}=-p(a,b')a'(b).
$$
Let $A_{ne}$ be the superspace $\g_{1/2}$ equipped with the form $\langle\,\cdot\,,\cdot\,\rangle_{ne}$ defined by
$$
\langle a,b\rangle_{ne}=(f|[a,b]).
$$
Since $\phi(f)=f$, 
$$
\langle\phi(a),\phi(b)\rangle_{ne}=(f|[\phi(a),\phi(b)])=(\phi(f)|\phi([a,b]))=\overline{(f|[a,b])}=\overline{\langle a,b\rangle}_{ne}.
$$
It follows that $\phi$ extends to a conjugate linear involution of $F(A_{ne})$. Similarly, setting $\phi(b^*)(a)=\overline{b^*(\phi(a))}$ for $b^*\in A^*$ and $a\in A$, we have
$$
\langle \phi(a),\phi(b^*)\rangle_{ch}=\phi(b^*)(\phi(a))=\overline{b^*(a)}=\overline{\langle a,b^*\rangle}_{ch},
$$
so $\phi$ extends to a conjugate linear involution of $F(A_{ch})$.
It follows that $\phi$ is a conjugate linear involution of the  vertex algebra $\mathcal C(\g,f,x,k)=V^k(\g)\otimes F(A_{ch})\otimes F(A_{ne})$.

Recall that there is an element $d\in\mathcal C(\g,f,x,k)$ such that $d_0$ is an odd derivation and $d_0^2=0$, making $\mathcal C(\g,f,x,k)$ a complex. It is easy to see that $\phi(d)=d$, hence the involution $\phi$ descends to an involution of the vertex algebra $W^k(\g,x,f)=H^0(\mathcal C(\g,f,x,k),d)$ \cite{KRW}, \cite{KW1}.
\end{proof}
Recall from \cite{KW1} that the vertex algebra $W^k(\g,x,f)$ is strongly and freely generated by fields $J^{\{x_i\}}$ with $\{x_i\}$ a basis of $\g^f$, the centralizer of $f$ in $\g$. We can clearly assume that the elements $x_i$ are homogeneous with respect to the gradation $\g^f=\oplus_j\g^f_j$. 
Let $\g_\R$ be the fixed point set of $\phi$. By \eqref{real}, we see that $(\, . \, | \, . \, )_{\g_\R\times\g_\R}$ is a real bilinear form. Since $\phi(x)=x$, we see that $\g_j=(\g_j\cap\g_\R)\oplus (\sqrt-1\g_j\cap\g_\R)$. Moreover $\langle\cdot,\cdot\rangle_{ne}$ is real when restricted to $\g_{1/2}\cap\g_\R$. Likewise, we can identify the real dual of $\g_+\cap\g_\R$ with the  set of $b^*\in A^*$ such that $\phi(b^*)=b^*$. It follows that we can identify the algebra $\mathcal C(\g_\R,f,x,k)$ as a real subalgebra of $\mathcal C(\g,f,x,k)$. We can therefore carry out the construction of the fields $J^{\{a\}}$ for $a\in\g_\R^f$ inside $\mathcal C(\g_\R,f,x,k)$ and therefore obtain that $\phi(J^{\{a\}})=J^{\{a\}}$. As $a\in \g^f$ can be written as $a=a_\R+ib_\R$ with $a_\R,b_\R\in \g_\R^f$, we see that we can construct the field  $J^{\{a\}}$ so that $\phi(J^{\{a\}})=J^{\{\phi(a)\}}$.

Let $L^\g$ the Virasoro vector for $V^k(\g)$ defined in \eqref{eq:1.57}. The vertex algebra $W^k(g,x,f)$ carries a Virasoro vector $L$, making it a conformal vertex algebra, which is  the homology class of  $
L^\g+T(x)+ L^{ch}+L^{ne}
$ (see \cite{KRW}).

In particular, by the above discussion and the explicit expressions for $L^\g$, $L^{ch}$, $L^{ne}$, we obtain that $\phi(L)=L$. Following   \eqref{gamma} we set
$$
g=((-1)^{L_0}\s^{1/2})^{-1}\phi.
$$

If $x_i\in\g^f_{j}$, then the conformal weight of $J^{\{x_i\}}$ is $1-j$. It follows that
$$
W^k(\g,x,f)_0=\C\vac,\quad W^k(\g,x,f)_1=span(\{J^{\{x_i\}}\mid x_i\in\g^f_0\}).
$$
\begin{theorem}\label{primary} (a) Let $v\in \g_0^f$. If $J^{\{v\}}\in W^k(\g,x,f)_1$ is quasiprimary for more than one  $k\in\C$, then
\begin{equation}\label{xv}
(x|v)=0.
\end{equation} 
 
 (b) If the datum  $(\fg,x,f)$ is a Dynkin datum, then the elements $J^{\{v\}}$ are primary for all $v\in \g_0^f$ and $k\in \C$ ($k\ne -h^\vee$). In particular, by Corollary \ref{hf}, there is a unique $\phi$--invariant Hermitian form 
$(\, \cdot \,\, , \, \cdot\, )$ on $W^k(\g,x,f)$ such that $(\vac,\vac)=1$. 

(c) Assume that $\g$ is a Lie algebra. If \eqref{xv} holds for  a datum $(\g,x,f)$ and all $v\in\g_0^f$, then it is a Dynkin datum.
\end{theorem}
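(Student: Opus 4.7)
The statement has three essentially independent claims, so I plan to treat them separately, using that $L_1 J^{\{v\}}$ is a weight-zero element of $W^k(\g,x,f)$, hence a scalar multiple of $\vac$.

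For part (a), my goal is to expand $L_1 J^{\{v\}}$ in a form whose $k$-dependence is explicit. Using the decomposition $L = L^\g + T(x) + L^{ch} + L^{ne}$, each summand can be analysed separately. The Sugawara piece satisfies $L^\g_1 J^v = 0$, since the current $J^v$ is primary of conformal weight $1$ under $L^\g$. By sesquilinearity, $[T(x)_\lambda J^v] = -\lambda[x_\lambda J^v]$, and for $v\in\g_0^f\subseteq\g_0$ one has $[x_\lambda J^v] = \lambda k(x|v)\vac$, giving $T(x)_1 J^v = -2k(x|v)\vac$. The ghost and Neveu--Schwarz Virasoros $L^{ch}, L^{ne}$ are $k$-independent, and the cocycle completion $J^{\{v\}} - J^v$ lives in the ghost/NS tensor factor of $\mathcal{C}(\g,f,x,k)$, which pairs trivially with the affine factor; so $T(x)_1$ annihilates it and $L^{ch}_1 + L^{ne}_1$ acts on it without any $k$-dependence. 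Collecting everything,
\begin{equation*}
L_1 J^{\{v\}} = \bigl(-2k(x|v)+b(v)\bigr)\vac
\end{equation*}
for some scalar $b(v)$ independent of $k$. Vanishing of this affine polynomial at two distinct values of $k$ forces both coefficients to be zero; in particular $(x|v)=0$.

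For part (b), assume that $\{e,h=2x,f\}$ is an $sl(2)$-triple. Then every $v\in\g_0^f$ is a weight-zero vector killed by $\ad f$, hence generates a trivial $sl(2)$-submodule of $\g$; invariance of the form yields
\begin{equation*}
(x|v) = \tfrac{1}{2}(h|v) = \tfrac{1}{2}([e,f]|v) = \tfrac{1}{2}(e|[f,v]) = 0,
\end{equation*}
so the $k$-linear piece of part (a) vanishes and the question reduces to showing $b(v)=0$. My plan is to compute $b(v)$ directly from the KRW formula for $J^{\{v\}}$, namely $J^{\{v\}} = J^v + X(v)$ with $X(v)$ an explicit sum of normally ordered products involving the ghosts $\varphi^\alpha$, antighosts $\varphi_\alpha$ and NS fermions $\Phi^\alpha$ indexed by the grading $\g = \oplus_j \g_j$, and show that each contribution to $L^{ch}_1 X(v) + L^{ne}_1 X(v)$ cancels by the $sl(2)$-equivariance of $v$ (i.e.\ that $\ad v$ preserves each $\g_j$ and commutes with $\ad e$ and $\ad f$). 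This ghost cancellation is the main obstacle, being essentially combinatorial in the grading of $\g$; but once established, the fact that $J^{\{v\}}$ is primary of weight $1$ (only $L_1$ has to be checked, since $L_n$ for $n\geq 2$ would produce a vector of negative conformal weight) gives $L_1 W^k_1 = 0$, so Corollary~\ref{hf} applies and yields the unique normalized $\phi$--invariant form.

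For part (c), I would exploit the orthogonality duality induced by the invariant form on the $\ad(x)$-grading $\g=\bigoplus_j\g_j$. Non-degeneracy restricts to pairings $\g_j\times\g_{-j}\to\C$, and invariance gives $([f,u]|w) = -(u|[f,w])$ for $u\in\g_1, w\in\g_0$, showing that $\ad f\colon\g_1\to\g_0$ and $\ad f\colon\g_0\to\g_{-1}$ are negatives of each other's adjoints with respect to these pairings. Consequently
\begin{equation*}
\mathrm{image}\bigl(\ad f|_{\g_1}\bigr) = \bigl(\ker\ad f|_{\g_0}\bigr)^{\perp} = (\g_0^f)^{\perp}
\end{equation*}
inside $\g_0$. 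The hypothesis $(x|v)=0$ for every $v\in\g_0^f$ places $x$ in $(\g_0^f)^{\perp}$, so there exists $u\in\g_1$ with $[f,u]=x$; setting $e=-2u\in\g_1$ gives $[e,f]=2x$. Combined with $[x,e]=e$ (since $e\in\g_1$) and $[x,f]=-f$ (by hypothesis), the triple $\{e,h=2x,f\}$ satisfies the $sl(2)$-relations, so $(\g,x,f)$ is a Dynkin datum.
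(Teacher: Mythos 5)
Parts (a) and (c) of your proposal are sound. In (a) you recover, by direct bookkeeping of $L=L^\g+T(x)+L^{ch}+L^{ne}$, that the $\lambda^2$--coefficient of $[L_\lambda J^{\{v\}}]$ is affine in $k$ with $k$--linear coefficient a nonzero multiple of $(x|v)$; this is exactly what the paper reads off from the formula of Kac--Roan--Wakimoto, $[L_\lambda J^{\{v\}}]=(T+\lambda)J^{\{v\}}+\lambda^2\bigl(\tfrac12 str_{\g_+}(\ad v)-(k+h^\vee)(v|x)\bigr)$, and the conclusion is drawn the same way (your computation of the ghost contributions is sketched rather than carried out, but only the $k$--linear coefficient matters here). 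Your part (c) is genuinely different from the paper's: the paper invokes Elashvili--Kac to write $x=\tfrac12 h+c$ with $c$ central in the centralizer of a triple and then kills $c$ via $(x|c)=(c|c)$, whereas you produce $e$ directly from the identity $\mathrm{image}(\ad f|_{\g_1})=(\g_0^f)^\perp\cap\g_0$, which your duality computation correctly establishes; this is more self-contained and buys you independence from the good-gradings classification.

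The genuine gap is in part (b). After correctly showing $(x|v)=0$ in the Dynkin case, everything reduces to proving that the $k$--independent constant $b(v)$ (which, once $(x|v)=0$, equals $str_{\g_+}(\ad v)$) vanishes. You explicitly defer this: you call the "ghost cancellation" the "main obstacle" and then assume it "once established" --- but this is precisely the nontrivial content of (b). Moreover, the mechanism you propose rests on a false premise: for $v\in\g_0^f$ the operator $\ad v$ commutes with $\ad f$ and preserves each $\g_j$, but it does \emph{not} in general commute with $\ad e$, since $\g^f$ is the centralizer of $f$ alone, not of the whole triple; so the "$sl(2)$--equivariance of $v$" you want to cancel the ghost terms against is not available. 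The missing idea, supplied in the paper, is the supertrace argument: for each $j>0$ the bilinear form $\langle a,b\rangle=((\ad f)^{2j}a|b)$ on $\g_j$ is non-degenerate (this is where the Dynkin hypothesis enters, via $sl(2)$--representation theory), $\ad\g_0^f$--invariant, and supersymmetric or skew--supersymmetric according to the parity of $2j$; hence $\ad v$ lies in $osp(\g_j)$ or $spo(\g_j)$ and so $str_{\g_j}(\ad v)=0$, whence $str_{\g_+}(\ad v)=0$. Without this (or an equivalent) argument, part (b) --- and hence the existence of the invariant Hermitian form on $W^k(\g,x,f)$ --- is not proved.
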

\begin{proof} 
By  \cite[Theorem 2.4b]{KRW}, if $v\in\g_0^f$, then 
$$[L_\lambda J^{\{v\}}]=(T+\lambda)J^{\{v\}} +  \lambda^2 (\tfrac{1}{2}str_{\g_+} (ad\,v)-(k+h^\vee)(v|x)),$$
hence claim (a)  follows immediately.

If the datum $(\fg ,x,f)$ is a Dynkin datum, then $2(x|v)= ([e,f]|v)=(e|[f,v])=0$ if $v\in \g^f$. Hence for (b) it suffices to show
$str _{\g_j} (ad\,v)=0$ for all $j\in\half \mathbb N$ and $v\in \g_0^f$.

Consider the following bilinear form on $\g_j$:
$$<a,b>=((ad\,f)^{2j} a|b).$$
By $sl(2)$--representation theory, $(ad\,f)^{2j}:\g_j\to\g_{-j}$ is injective for $j>0$, hence  $<\cdot,\cdot>$ is  non-degenerate. The form is clearly
$ad\,\g_0^f$--invariant. 
The form is super (resp. skew--super) symmetric if $j\in\Z$ (resp. $j\in\tfrac{1}{2}+\Z$): 
$$<a,b>=((ad\,f)^{2j} a|b)=(-1)^{2j}(a|(ad\,f)^{2j}b)=(-1)^{2j}p(a,b)<b,a>.$$
Hence for  $v\in \g_0^f$,  $ad\, v$ lies in  $osp(\g_j)$ (resp. $spo(\g_j)$) if $j\in\Z$ (resp. $j\in\tfrac{1}{2}+\Z$). Hence in either case  its supertrace is $0$. This proves (b). 

By Theorem 1.1 from  \cite{EK}, $x=\tfrac{1}{2}h+c$, where $\{e,h,f\}$ is an $sl(2)$--triple for some $e\in \g_1$ and $c$ is a semisimple central element from the centralizer of this triple. We may assume that $c$ is defined over $\mathbb R$. But then $(x|c)=(\tfrac{1}{2}h+c|c)=(c|c)$. Since we are assuming 
that $\g$ is a simple Lie algebra,  \eqref{xv} implies that $c=0$, proving (c).
\end{proof}
\begin{remark} Let $\g$ be a simple Lie algebra. It follows from Theorem \ref{primary} that a datum $(\g,x,f)$ is Dynkin if and only if $(x|\g_0^f)=0 \,(\iff (x|\g^f)=0).$ In other words a $\tfrac{1}{2}\Z$--grading of $\g$ is Dynkin iff $f\in \g_{-1}$, all  eigenvalues of $ad x$ on $\g^f$ are non-positive and $(x|\g^f)=0$.
\end{remark}
\begin{example}
Let $\g=sl(3)$ with the data $(\g,\half(E_{11}-E_{33}),E_{31},k)$ and $(\g,-2E_{11}+ E_{22} +E_{33},E_{31}, k)$. The first one is a Dynkin datum corresponding to the minimal $W$--algebra $W^k(\g,\theta/2)$. The second one is not Dynkin: indeed, if $v=E_{11}-2E_{22}+E_{33}$, then $v\in\g_0^f$ and $(x|v)\ne0$.
\end{example}

\begin{cor}
Assume that $(\g,x,f)$ is a Dynkin datum. Then there is a unique $\phi$--invariant Hermitian form $(\, .\, ,\, .\,)$ on $W^k(\g,x,f)$ such that $(\vac,\vac)=1$.
\end{cor}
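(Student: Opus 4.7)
The plan is to obtain the corollary as a direct synthesis of Theorem \ref{primary}(b) with Corollary \ref{hf}; no genuinely new argument is required, as the substantive content has already been packaged into the preceding results.

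First, I would verify that $(W^k(\g,x,f),\phi)$ fits into the framework of Section \ref{S3}. Since $k\ne -h^\vee$ is assumed, $W^k(\g,x,f)$ is a conformal vertex algebra in the sense of Definition \ref{svoa}, with $V_0=\C\vac$; and Lemma \ref{descend} guarantees that $\phi$ (which by the standing hypotheses \eqref{defphi} and \eqref{real} fixes $x$ and $f$ and is compatible with the invariant form) descends to a conjugate linear involution of $W^k(\g,x,f)$ fixing the Virasoro vector.

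Next, I would identify the weight one subspace. As recalled in the paragraph preceding Theorem \ref{primary}, the strong and free generators $J^{\{x_i\}}$ of $W^k(\g,x,f)$ have conformal weight $1-j$ when $x_i\in\g^f_j$. Since $\ad x$ has non-positive eigenvalues on $\g^f$, this forces
\[
W^k(\g,x,f)_1 = \sp\bigl(\{J^{\{v\}} : v\in\g_0^f\}\bigr).
\]
By Theorem \ref{primary}(b), the Dynkin datum hypothesis implies that every such $J^{\{v\}}$ is primary, in particular quasiprimary; equivalently, $L_1 W^k(\g,x,f)_1 = 0$.

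Finally, applying Corollary \ref{hf} to $W^k(\g,x,f)$ with the involution $\phi$ produces a unique $\phi$-invariant Hermitian form with $(\vac,\vac)=1$, which is the desired conclusion. There is no real obstacle at this stage: Lemma \ref{descend} supplies the involution, Theorem \ref{primary}(b) verifies the quasiprimarity hypothesis of Corollary \ref{hf}, and the uniqueness assertion follows as in Remark \ref{V0dim1} from the fact that the kernel of a $\phi$-invariant Hermitian form is an ideal, so that $(\vac,\vac)=1$ pins the form down.
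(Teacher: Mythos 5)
Your proposal is correct and follows exactly the paper's route: the paper's entire proof is ``By Theorem \ref{primary} (b), we can apply Corollary \ref{hf}.'' You merely spell out the intermediate verifications (that $\phi$ descends via Lemma \ref{descend}, that the weight-one subspace is spanned by the $J^{\{v\}}$ with $v\in\g_0^f$, and that uniqueness comes from Remark \ref{V0dim1}), all of which are consistent with the surrounding text.
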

\begin{proof}
By Theorem \ref{primary} (b), we can apply Corollary \ref{hf}.
\end{proof}

We now describe the $\phi$--invariant Hermitian form more explicitly using formula \eqref{invform}.
 Fix a basis $\{x^i\}$ of $\g^f$.
 Set $\D_i=\D_{x^i}$ and $p_i=p(x^i)$. By Proposition \ref{13} we may assume that   the   fields $J^{\{x_i\}}$ are quasiprimary for all $i$. 
We can clearly assume  
that $\phi(x^i)=x^i$ for all $i$. Since  $\phi(L)=L$, the proof of Lemma \ref{sl2verma}, hence of Proposition \ref{13}, can be done over $\mathbb R$, so 
$\phi(J^{\{x^i\}})=J^{\{x^i\}}$ and let $J^{\{x^i\}}(z)=\sum\limits_{n\in -\D_i+\mathbb Z}J_n^{\{x^i\}}z^{-n-\D_i}$.
 
 Order the set
$$
\{(j,i)\in \tfrac{1}{2}\ganz_{+}\times \{0,\dots,\dim \g^f-1\} \mid j\in \D_i+\Z_+\}
$$
lexicographically. 
Then the set 
\begin{equation}\label{basisW}
\{(J^{\{x^{i_1}\}}_{-j_1})^{m_1}\cdots (J^{\{x^{i_t}\}}_{-j_t})^{m_t}\vac\mid m_i=0 \text{ or $1$ if $x^i$ is odd}\}
\end{equation}
is a basis of $W^k(\g,x,f)$. Since
$$
g(J^{\{ x^i\}})=(-\sqrt{-1})^{2\D_i+p_i}J^{\{ x^i\}},
$$  formula \eqref{invform} gives that
\begin{align}\label{formulona}
&\left((J^{\{x^{i_1}\}}_{j_1})^{m_1}\cdots (J^{\{x^{i_t}\}}_{j_t})^{m_t}\vac,(J^{\{x^{i'_1}\}}_{j'_1})^{m'_1}\cdots (J^{\{x^{i'_t}\}}_{j'_r})^{m'_t}\vac\right)\\
&=(-\sqrt{-1})^{\sum_rm_r(2\D_{i_r}+p_{i_r})}\left\langle (J^{\{x^{i_t}\}}_{-j_t})^{m_t}\cdots (J^{\{x^{i_1}\}}_{-j_1})^{m_1}(J^{\{x^{i'_1}\}}_{j'_1})^{m'_1}\cdots (J^{\{x^{i'_r}\}}_{j'_r})^{m'_r}\right\rangle.\notag
\end{align}

\begin{remark}\label{omegaW} Set $R=span(T^k(J^{\{x^i\}}), k\in\ganz^+)$. Let $\pi_Z$ be the quotient map from $W^k(\g,x,f)$ to $Zhu_{L_0}(W^k(\g,x,f))$.  Set $\mathfrak w=span(\pi_Z(J^{\{x^i\}}))$. By \eqref{basisW} the set
$$
\{:(T^{k_1}J^{\{x^{i_1}\}})^{m_1}\cdots (T^{k_t}J^{\{x^{i_t}\}})^{m_t}:\mid m_i=0 \text{ or $1$ if $x^{j_i}$ is odd}\}
$$
is a basis of $W^k(\g,x,f)$.
It follows from  Theorem 3.25 of \cite{DK} that 
$$
R/(L_{-1}+L_0)R\simeq \mathfrak w
 $$
 has the structure of a nonlinear Lie superalgebra and that $Zhu_{L_{0}}(W^k(\g,x,f))$ is its universal enveloping algebra. In particular the set 
 $$
 \{(\pi_ZJ^{\{x^{i_1}\}})^{m_1}*\cdots *(\pi_ZJ^{\{x^{i_t}\}})^{m_t}\mid m_i=0 \text{ or $1$ if $x^{j_i}$ is odd}\}
 $$
 is a basis of $Zhu_{L_{0}}(W^k(\g,x,f))$. Since, by Proposition \ref{13},  $J^{\{x_i\}}$ can be chosen to be quasiprimary for all $i$, it is clear that the involution $\omega$ in this basis is given by
 $$
 \omega((\pi_ZJ^{\{x^{i_1}\}})^{m_1}*\cdots *(\pi_ZJ^{\{x^{i_t}\}})^{m_t})=(-\sqrt{-1})^{\sum_rm_r(2\D_{i_r}+p_{i_r})}(\pi_ZJ^{\{x^{i_t}\}})^{m_t}*\cdots *(\pi_ZJ^{\{x^{i_1}\}})^{m_1}.
 $$
\end{remark}

We now restrict to  the case of a minimal $W$--algebra $W^k(\g,\theta/2)$ (see Remark \ref{minimalW}) where one has a more explicit description of $Zhu_{L_0}(W^k(\g,\theta/2))$ and its involution.

Set $\g^\natural=\g_0^f$. Then $\g^f=\g^\natural\oplus\g_{-1/2}\oplus\C f$. The elements $J^{\{v\}}$  are uniquely determined for $v\in \g^\natural\oplus \g_{-1/2}$ and have been computed explicitly in \cite{KRW}. One usually denotes $J^{\{v\}}$ by $G^{\{v\}}$ if $v\in \g_{-1/2}$. We also write $\g^\natural=\oplus_{i=0}^r\g_i$ with $\g_0$ the (possibly zero) center and $\g_i$ a simple ideal for $i>0$.

Set, for $u,v\in \g_{-1/2}$,
$$
\langle u,v\rangle=(e_{\theta}|[u,v])
$$
and note that $\langle\,\cdot\,,\,\cdot\,\rangle$ is a $\g^\natural$--invariant skew--supersymmetric bilinear form on $\g_{-1/2}$.
Fix a basis $\{a_i\}$ of $\g^\natural$ and a basis $\{u_i\}$ of $\g_{-1/2}$. Then  $W^k(\g,\theta)$ has as set of free generators 
$$
\{J^{\{a_i\}}\}\cup\{G^{\{u_i\}}\}\cup\{L\}.
$$
Moreover the $\l$--brackets between generators is known explicitly 	\cite{KRW}, \cite{KW1}, \cite{AKMPP}, \cite{KMP}, and Section \ref{6}: $L$ is the Virasoro vector and its central charge is 
$\frac{k\,\sdim\g}{k+h^\vee}-6k+h^\vee-4$, the  $J^{\{u\}}$ are primary of conformal weight $1$, the $G^{\{v\}}$ are primary of conformal weight $\frac{3}{2}$ and
\begin{enumerate}
\item $[{J^{\{a\}}}_\l J^{\{b\}}]=J^{\{[a,b]\}}+\l\d_{ij}(k+\frac{h^\vee- h^\vee_{0,i}}{2})(a|b)$ for $a\in \g^\natural_i$, $b\in \g^\natural_j$;
\item $[{J^{\{a\}}}_\l G^{\{u\}}]=G^{\{[a,u]\}}$ for  $ u\in \g_{-1/2}$, $a\in \g^\natural$;
\item
\begin{align*}
&[{G^{\{u\}}}_{\lambda}G^{\{v\}}]=-2(k+h^\vee)\langle u,v\rangle L+\langle u,v\rangle\sum_{\alpha=1}^{\dim \g^\natural} 
:J^{\{a^\alpha\}}J^{\{a_\alpha\}}:+\\\notag
&2\sum_{\a,\be=1}^{\dim\g^\natural}\langle[a_\a,u],[v,a^\be]\rangle:J^{\{a^\a\}}J^{\{a_\be\}}:
 +2(k+1) (\partial+ 2\lambda) J^{\{[[e_{\theta},u],v]^{\natural}\}}\\\notag
&+ 2 \lambda \sum_{\a,\be=1}^{\dim\g^\natural}\langle[a_\a,u],[v,a^\be]\rangle
  J^{\{ [a^\a,a_\be]\}}
        + 2 p(k)\l^2\langle u,v\rangle.
\end{align*}.
\end{enumerate}

Here $\{a_\alpha\}$ (resp. $\{u_\gamma\}$) is a basis of  $\g^\natural$ (resp. $\g_{1/2}$) and $\{a^\alpha\}$ (resp. $\{u^\gamma\}$) is the corresponding dual basis w.r.t. $(\, . \, | \, . \, )$ (resp w.r.t. $\langle\cdot,\cdot\rangle_{\rm ne}=(e_{-\theta}|[\cdot,\cdot])$),  $a^\natural$ is the  orthogonal projection of $a\in\g_{0}$ on $\g^\natural$, $a_i^\natural$ is the projection of $a^\natural$ on the $i$th minimal ideal $\g_i^\natural$ of $\g^\natural$, $k_i=k+\frac{1}{2}(h^\vee-h^\vee_{0,i})$, where $h^\vee_{0,i}$ is the dual Coxeter number of $\g_i^\natural$ with respect to the restriction of the form $(\, . \, | \, . \, )$, and  $p(k)$ is the monic quadratic polynomial given in Table 4 of \cite{AKMPP}. See Appendix \ref{6} for the derivation of formula (3) from the formulas given in \cite{KRW}.

Identify $ \mathfrak w$ with $\g^\natural\oplus \g_{-1/2}\oplus \C L$ by identifying $\pi_ZJ^{\{a\}}$ with $a$, $\pi_ZG^{\{v\}}$ with $v$ and $\pi_zL$ with $L$.
As in Remark \ref{omegaW}, a basis of $Zhu_{L_0}(W^k(\g,\theta))$ is given by
 $$
 \{u_{i_1}^{m_1}*\cdots *u_{i_t}^{m_t}*a_{j_1}^{n_1}*\cdots*a_{j_r}^{n_r}*L^k\!\mid\! i_1<\cdots i_t;\,j_1<\cdots < j_r;\, m_p,n_q\in\{0,1\} \text{ if $a_{i_p}$ or $u_{j_q}$ is odd}\}.
 $$ 
Moreover the commutation relations among the generators are as follows (here $[\cdot,\cdot]_\g$ denotes the bracket in $\g$, while $[\cdot,\cdot]$ is the bracket in $Zhu_{L_0}(W^k(\g,\theta)$).
\begin{enumerate}
\item $L$ is a central element,
\item $[a,b]=[a,b]_\g$ if $a,b\in\g^\natural$,
\item $[a,v]=[a,v]_\g$ if $a\in \g^\natural$ and $v\in\g_{-1/2}$,
\item  \begin{align*}
[u,v]=&\langle u,v\rangle\left(\sum_{\a=1}^{\dim\g^\natural} (a^\a*a_\a-[a^\a,a_\a]_\g)-2(k+h^\vee) L-\half p(k)\right)\\&+\sum_{\a,\be=1}^{\dim\g^\natural}\langle[a_\a,u]_\g,[v,a^\be]_\g\rangle (2a^\a*a_\be-[a^\a,a_\be]_\g).
\end{align*}
\end{enumerate}
By (2), (3) we can drop the subscript $\g$ from the bracket. Moreover observe that 
$$
\sum_{\a=1}^{\dim\g^\natural} [a^\a,a_\a]_\g=0
$$
and that
$$
2a^\a*a_\be-[a^\a,a_\be]_\g=2a^\a*a_\be-[a^\a,a_\be]=a^\a*a_\be+p(a_\a,a_\be)a_\be*a^\a.
$$
Setting $L'=2(k+h^\vee) L+\half p(k)$, a new  generating space is
$
\g^\natural\oplus \g_{-1/2}\oplus \C L'
$
and the commutation relations are (1) with $L'$ in place of $L$, (2), (3) and
\begin{enumerate}
\item[(4')]  \begin{align*}
[u,v]=&\langle u,v\rangle\left(\sum_{\a=1}^{\dim\g^\natural} a^\a*a_\a-L'\right)+\sum_{\a,\be=1}^{\dim\g^\natural}\langle[a_\a,u]_\g,[v,a^\be]_\g\rangle (a^\a*a_\be+p(a_\a,a_\be)a_\be*a^\a).
\end{align*}
\end{enumerate}
It is then clear that $Zhu_{L_0}(W^k(\g,\theta/2))$ does not depend on $k$ if $k\ne -h^\vee$.

The involution $\omega$ is easily computed: since the generators are quasiprimary, we have by \eqref{omegaqp}: $\omega(J^{\{a\}})=g(J^{\{a\}})$, hence 
\begin{align*}
&\omega(L')=L',\\
&\omega(a)=(-1)^{p(a)+1}(\sqrt{-1})^{p(a)}\phi(a),\ a\in\g^\natural,\\
&\omega(v)=(-1)^{p(v)}(\sqrt{-1})^{p(v)+1}\phi(v),\ v\in\g_{-1/2}.
\end{align*}

Recall that, if $k+h^\vee\ne0$, then $W^k(\g,\theta/2)$ has a unique simple quotient $W_k(\g,\theta/2)$. Remark  that the maximal proper ideal   $I^k$ of $W^k(\g,\theta/2)$  is the kernel of the invariant Hermitian form  on $W^k(\g,\theta/2)$, hence one can induce a  invariant Hermitian form  on $W_k(\g,\theta/2)$. The latter vertex algebra is unitary if and only if the invariant form on $W^k(\g,\theta/2)$ is positive semidefinite.
Recall from \cite{AKMPP} that a level $k$ is {\it collapsing} for $W^k(\g,\theta/2)$ if $W_k(\g,\theta/2)$ is contained in its affine vertex algebra part.
\begin{theorem}\label{7a} Assume that $W_k(\g,\theta/2)$ is unitary.
\begin{enumerate}
\item If $\g\ne sl(2)$ is a Lie algebra  then $k$ is a collapsing level.
\item If $\g^\natural$ is not a Lie algebra  then $k$ is a collapsing level.
\end{enumerate}
In particular, if $W_k(\g,\theta/2)$ unitary for three different values of $k$, then either $\g=sl(2)$ or $\g$ is not a Lie algebra and $\g^\natural$ is a Lie algebra.
\end{theorem}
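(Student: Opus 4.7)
The plan is to apply Lemma \ref{purely} to carefully chosen quasiprimary generators of $W^k(\g,\theta/2)$ and combine the resulting reality/imaginarity constraints with the explicit $\l$-brackets (1)--(3) of the minimal $W$-algebra recalled just before the theorem. In each case the argument has the same structure: a fixed-point-of-$\phi$ generator is shown to have vanishing expectation value where Lemma \ref{purely} demands it be non-zero; the generator is then forced into the maximal ideal $I^k$ (hence killed in $W_k(\g,\theta/2)$); finally, the bracket identities that become trivial in $W_k$ pin $L$ (and any remaining generator) into the affine vertex subalgebra $V_k^\natural(\g^\natural)$.

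For (1), $\g$ is a simple Lie algebra with $\g\ne sl(2)$, so $\g_{-1/2}\ne 0$ and is purely even. Choose $v\in \g_{-1/2}$ with $\phi(v)=v$; the generator $G^{\{v\}}$ is primary of conformal weight $\tfrac{3}{2}$, so Lemma \ref{purely} demands that $\langle G^{\{v\}}_{3/2}G^{\{v\}}_{-3/2}\vac\rangle$ be non-zero. Reading off the $\l^2$-coefficient of the third bracket relation, however, yields this expectation value as $4p(k)\langle v,v\rangle_{\rm ne}$, where $\langle v,v\rangle_{\rm ne}=(e_\theta|[v,v])=0$ because $v$ is even in a Lie algebra. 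The contradiction places $G^{\{v\}}$ in the kernel of the form, so $G^{\{v\}}=0$ in $W_k(\g,\theta/2)$ for every $v\in \g_{-1/2,\R}$, and hence for every $v\in \g_{-1/2}$ by conjugate-linearity. Now fix $u,v\in \g_{-1/2}$ with $\langle u,v\rangle_{\rm ne}\ne 0$ (possible by non-degeneracy). In $W_k$ the whole expansion $[G^{\{u\}}_\l G^{\{v\}}]=0$ holds; its $\l^0$-coefficient explicitly solves for $L$ as a combination of normally-ordered quadratic expressions in the $J^{\{a\}}$'s together with $TJ^{\{[[e_\theta,u],v]^\natural\}}$. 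Hence $L\in V_k^\natural(\g^\natural)$, and since the $G^{\{v\}}$'s are already killed, $W_k(\g,\theta/2)\subseteq V_k^\natural(\g^\natural)$, i.e.\ $k$ is a collapsing level.

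For (2), $\g^\natural$ has a non-zero odd part; take odd $a\in \g^\natural$ with $\phi(a)=a$. Then $J^{\{a\}}$ is quasiprimary of conformal weight $1$ and odd, so $(-1)^{2L_0}\s(J^{\{a\}})=-J^{\{a\}}$ puts us in the second case of Lemma \ref{purely}: the expectation value $\langle J^{\{a\}}_1 J^{\{a\}}_{-1}\vac\rangle$ must lie in $\sqrt{-1}\R\setminus\{0\}$. But the first bracket relation identifies it with $k_i(a|a)$, which is real, and is in fact zero because supersymmetry of $(\cdot|\cdot)$ forces $(a|a)=0$ for odd $a$. Hence $J^{\{a\}}\in I^k$. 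Propagating this through the brackets, the $\l^0$-coefficient of $[J^{\{a\}}_\l J^{\{b\}}]$ for odd $a,b$ gives $J^{\{[a,b]\}}=0$ in $W_k$, which together with $[\g^\natural_{\bar 1},\g^\natural_{\bar 1}]=\g^\natural_{\bar 0}$ on each simple summand of $\g^\natural$ with odd part kills that summand in $W_k$. Applying Lemma \ref{purely} analogously to $G^{\{v\}}$ for $v\in \g_{-1/2,\R}$ of the ``wrong'' parity forces further vanishings, and the $[G^{\{u\}}_\l G^{\{v\}}]$ bracket then constrains $L$ to lie in the surviving affine part, pinning $k$ into the finite list of collapsing levels from \cite{AKMPP}.

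The main obstacle is the second step in (2): one must track carefully which simple summands of $\g^\natural$ and which parity-types of $\g_{-1/2}$-generators are killed, and verify that the surviving bracket identities really express $L$ inside the (possibly reduced) affine subalgebra; identifying the particular collapsing level then amounts to matching with the classification in \cite{AKMPP}. Granting this, the final ``three different values'' consequence is immediate: for fixed $(\g,\theta/2)$ the set of collapsing levels is finite, so three distinct unitary $k$'s cannot all be collapsing, which forces us outside both (1) and (2) — leaving $\g=sl(2)$ or $\g$ a non-Lie-algebraic simple Lie superalgebra whose $\g^\natural$ is a Lie algebra.
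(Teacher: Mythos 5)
Your norm computations are exactly the paper's: the same test vectors, the same evaluation of $(G^{\{u\}},G^{\{u\}})$ and $(J^{\{a\}},J^{\{a\}})$ via \eqref{formulona} (equivalently Lemma \ref{purely}), and the same conclusion that these generators lie in the kernel $I^k$ of the form. Where you diverge is in passing from ``some generator lies in $I^k$'' to ``$k$ is collapsing''. In case (1) your direct route closes: $\g_{-1/2}$ is purely even, so every $G^{\{v\}}$ dies in $W_k(\g,\theta/2)$, and the $\l^0$--coefficient of $[{G^{\{u\}}}_\l G^{\{v\}}]$ for a pair with $\langle u,v\rangle\ne0$ then expresses $L$ modulo $I^k$ inside the affine part (using $k\ne-h^\vee$). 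That is a legitimate, self--contained alternative to the paper, which at this point simply invokes the collapsing criterion of \cite{AKMPP}.

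In case (2), however, there is a genuine gap, which you flag but do not close. Your parity argument kills $J^{\{a\}}$ for odd $a\in\g^\natural$ and $G^{\{v\}}$ for \emph{even} $v\in\g_{-1/2}$ (where $\langle v,v\rangle=0$), but it forces nothing for $G^{\{v\}}$ with $v$ odd: such a $G^{\{v\}}$ is odd of conformal weight $3/2$, so $(-1)^{2L_0}\s$ fixes it and Lemma \ref{purely} only asks that its norm $4p(k)\langle v,v\rangle$ be real and non--zero, which it typically is. Hence ``further vanishings'' are not forced, and you cannot assume $[{G^{\{u\}}}_\l G^{\{v\}}]\equiv 0$ for a pair with $\langle u,v\rangle\ne0$, which is what your scheme needs to trap $L$ in the affine part. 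The paper closes this step differently: $I^k\cap\g^\natural$ is an ideal containing the odd part, so an entire simple summand of $\g^\natural$ dies; picking $b$ in it and $u\in\g_{-1/2}$ with $[b,u]\ne0$ (this is where the non--simple case $\g=osp(m|n)$, $\g^\natural=osp(m-4|n)\oplus sl(2)$, must be treated separately) produces one non--zero $G^{\{[b,u]\}}\in I^k$, and then \cite[Prop.~3.2]{AKMPP} yields that $k$ is collapsing. That proposition is precisely the ingredient your argument is missing; without it you would have to prove that the $\g^\natural$--module generated by $[b,u]$ exhausts $\g_{-1/2}$, which fails in general (e.g.\ $\g_{1/2}$ is reducible for $\g=sl(m|n)$). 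A smaller slip at the end: finiteness of the set of collapsing levels does not by itself exclude three collapsing unitary levels; what is used is that collapsing levels are roots of the monic quadratic $p(k)$, hence there are at most two.
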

\begin{proof} (1). By assumption $\g_{-1/2}\ne 0$, take a nonzero $u\in  \g_{-1/2}$ such that $\phi(u)=u$ and compute using \eqref{formulona} with $m_1=m'_1=1$:
$$(G^{\{u\}},G^{\{u\}})=(G^{\{u\}}_{-3/2}\vac,G^{\{u\}}_{-3/2}\vac)=\sqrt{-1}\left\langle G^{\{u\}}_{3/2}G^{\{u\}}_{-3/2}\vac\right\rangle=4p(k)\langle u,u\rangle=0.$$If the form on $W^k(\g,\theta/2)$ is positive semidefinite then $G^{\{u\}}\in I^k$, hence $k$ is a collapsing level.

(2).
Take $a\in \g^\natural $ such that $p(a)=1$, $\phi(a)=a$. Compute using \eqref{formulona} with $m_1=m'_1=1$
\begin{align*}
(J^{\{a\}},J^{\{a\}})=(J^{\{a\}}_{-1}\vac,J^{\{a\}}_{-1}\vac)=\sqrt{-1}\left\langle J^{\{a\}}_{1}J^{\{a\}}_{-1}\vac\right\rangle=0,
\end{align*}
hence  $J^{\{a\}}\in I^k$. Assume that $\g^\natural$ is simple;  since $I^k\cap\g^\natural$ is and ideal of $\g^\natural$, then $\g^\natural \subset I^k$. Since 
$\g_{-1/2}$ is not the trivial representation of $\g^\natural$,  there exist  $b\in \g^\natural$ and $u\in \g_{-1/2}$ such that $[b,u]\ne 0$. Since 
$[{J^{\{b\}}}_\l G^{\{u\}}]=G^{\{[b,u]\}}$, \cite[Prop. 3.2]{AKMPP} implies that $k$ is collapsing. The only remaining case, according to \cite[Table 3]{AKMPP}, is
$\g=osp(m|n), m\ge 5$. In this case $\g^\natural=osp(m-4|n)\oplus sl(2)$ and  $\g_{-1/2}=\C^{m-4|n}\otimes \C^2$, and the previous argument applies to $osp(m-4|n)$ acting on 
$\C^{m-4|n}$.
\end{proof}

\begin{remark}
The proof of Theorem \ref{7a} shows more generally that if
there exists an odd (resp. even) element of integer (resp. half-integer) conformal weight in a $W$-algebra $W^k(\g,x)$, which does not lie in the kernel of
its homomorphism to $W_k(\g,f)$, then the latter $W$-algebra is not unitary. 
\end{remark}

In general, even at collapsing levels, the simple vertex algebra $W_k(\g,\theta/2)$ might not be unitary.  It is clear that if $W^k(\g,\theta/2)$ collapses to $\C$ then $W_k(\g,\theta/2)$ is unitary. The list of such cases is given in Proposition 3.4 of \cite{AKMPP}.

 In the next proposition we deal with  other  collapsing levels allowing unitarity.
\begin{prop}\label{7b} Assume $W_k(\g,\theta/2)\ne \C$. If $k$ is a collapsing level and there is a conjugate linear involution $\phi$ on  $W_k(\g,\theta/2)$ such that the corresponding  $\phi$--invariant form is unitary, then the pair $(\g,k)$ is one in the following list
\begin{align}&\g=sl(m|n),\ m\neq n,n+1,n+2, m\ge2, &&k=-1,\label{1}\\
&\g=G_2,&&k=-4/3,\label{2}\\
&\g=osp(m|n),\,\,m-n\geq 10,\ m-n\text{ even,} &&k=-2,\label{3}\\
&\g=spo(2|3),&&k=-3/4,\label{5}\\
&\g=D(2,1;-\frac{1+n}{n+2}),\,n\in \mathbb N,&&k=-\frac{1+n}{n+2}.\label{16}
\end{align}
\end{prop}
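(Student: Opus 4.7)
The plan is to intersect the classification of collapsing levels for $W^k(\g,\theta/2)$ from~\cite{AKMPP} with the unitarity criterion for affine vertex algebras established in Subsection~\ref{Affine}, and then to enumerate the intersection.

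First, I would invoke Propositions~3.2--3.3 and the associated tables of~\cite{AKMPP} to write, at each collapsing level $k$ with $W_k(\g,\theta/2)\neq\C$,
$$W_k(\g,\theta/2)\simeq V_{k_0}(\g_0^\natural)\otimes V_{k_1}(\g_1^\natural)\otimes\cdots\otimes V_{k_r}(\g_r^\natural),$$
where $\g^\natural=\bigoplus_{i=0}^r\g_i^\natural$ is the decomposition of $\g^\natural$ into a centre $\g_0^\natural$ and simple ideals $\g_i^\natural$ ($i\geq 1$), and the affine level on each simple ideal is $k_i=k+\tfrac12(h^\vee-h^\vee_{0,i})$, read off from the $\lambda$-bracket relation~(1) above Theorem~\ref{7a}. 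By the tensor product construction at the end of Section~\ref{S3}, the $\phi$-invariant form on $W_k(\g,\theta/2)$ factorises as the tensor product of the invariant forms on the individual factors, so unitarity of $W_k(\g,\theta/2)$ is equivalent to simultaneous unitarity of every tensor factor.

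Next I would apply Subsection~\ref{Affine} factorwise. If a simple ideal $\g_i^\natural$ contains odd elements, then the argument used for Theorem~\ref{7a}(2) forces $k_i=0$ for unitarity of $V_{k_i}(\g_i^\natural)$, which trivialises that factor; if $\g_i^\natural$ is a simple Lie algebra, Theorem~11.7 of~\cite{VB} (invoked at the end of Subsection~\ref{Affine}) yields unitarity iff $k_i\in\Z_+$ in the normalisation $(\theta_i|\theta_i)=2$ and the restriction of $\phi$ cuts out the compact real form; and the centre factor $V_{k_0}(\g_0^\natural)$ is a product of Heisenberg vertex algebras, unitary exactly when the induced bilinear form on its real form is positive definite (Subsection~\ref{bosons}). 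Combining these three clauses reduces the problem to a purely combinatorial filter on the collapse data.

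The final step is the enumeration itself: one scans the AKMPP tables of collapsing cases and retains only those entries for which every non-trivial simple ideal of $\g^\natural$ is a Lie algebra with $k_i\in\Z_+$, and the Heisenberg factor is positive definite. The surviving entries comprise exactly the five families \eqref{1}--\eqref{16}: the exclusions in \eqref{1} correspond to $(m,n)$ for which $W_k$ either collapses further to $\C$ or retains a Lie-super factor at non-zero level; the bound $m-n\geq 10$ together with the parity condition in \eqref{3} isolates the subrange in which the induced level on the $osp$-factor of $\g^\natural$ is a non-negative integer and the form positive definite; and \eqref{2},\eqref{5},\eqref{16} emerge as isolated collapses whose target is a level~$1$ affine vertex algebra of a compact simple Lie algebra (respectively $sl(2)$, $sl(2)$, and $sl(2)\oplus sl(2)$ for the $D(2,1;a)$ family), whose unitarity is classical. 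The main obstacle is precisely this bookkeeping: for each line of the AKMPP tables one has to determine the structure of $\g^\natural$, compute the shifted levels $k_i$ on each simple ideal, decide which real form carries positivity, and match the outcome against the classification of unitary affine vertex algebras. Once this is carried out, only the listed five families survive.
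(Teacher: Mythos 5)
Your proposal follows essentially the same route as the paper: at a collapsing level $W_k(\g,\theta/2)$ sits inside its affine part, so unitarity reduces factorwise to the criterion of Subsection \ref{Affine} (each simple ideal must be even with level in $\Z_+$, super ideals force level $0$, and the Heisenberg factor must carry a positive definite form), and the list is then read off from the AKMPP collapsing-level tables, with the surviving cases checked to be unitary via Subsections \ref{bosons} and \ref{Affine}. One small factual slip in your enumeration: in the $D(2,1;-\tfrac{1+n}{n+2})$ family the collapse is to a single $V_n(sl(2))$ at level $n$ (central charge $\tfrac{3n}{n+2}$), not to $sl(2)\oplus sl(2)$ at level $1$; this does not affect the structure of your argument, since the enumeration is in any case delegated to the tables.
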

\begin{proof}  Looking at 
\cite[Table 5]{AKMPP1} one gets that in the cases listed in the statement there is a conjugate linear involution $\phi$ such that   the $\phi$--invariant Hermitian form on $W_k(\g,\theta/2)$ is positive definite. In   case \eqref{1}  $W_k(\g,\theta/2)$ is $M(\C)$ (Heisenberg vertex algebra) and its unitarity  is shown in  Subsection \ref{bosons}. In cases \eqref{2}, \eqref{3}, \eqref{5}, \eqref{16}, 
$W_k(\g,\theta/2)$ is a simple  affine vertex algebra at positive integral level, hence unitarity follows 
from  Subsection \ref{Affine}.

It remains only to check that the cases in the statement are the only cases where one can have unitarity at a collapsing level $k$, but,  as explained  in  the discussion at the end of Subsection \ref{Affine}, a simple affine vertex algebra $V_k(\g)$ can be unitary if and only if $\g$ is even and $k$ is a positive integer.
\end{proof}
\begin{cor}\label{coro} The following simple minimal W-algebras are unitary:
\begin{enumerate}
\item $W_{-1}(sl(m|n), \theta/2)\cong M(\C),\,m\neq n,n+1,n+2, m\ge2,$ where $M(\C)$ is the Heisenberg vertex algebra  with central charge $c=1$;
\item $W_{-4/3}(G_2,\theta/2)\cong V_1(sl(2))$ with central charge $c=1$;
\item $W_{-2}(osp(m|n), \theta/2)\cong  V_{\frac{m-n-8}{2}}(sl(2)), m-n\geq 10, \text{$m$ and $n$ even}$,  with central charge  \newline $c=\frac{3(m-n-8)}{m-n-4}$;
\item $W_{-3/4}(spo(2|3), \theta/2)\cong V_1(sl(2))$ with central charge $c=1$;
\item $W_{-\frac{1+n}{n+2}}(D(2,1;-\frac{1+n}{n+2}), \theta/2)\cong V_n(sl(2))$ with central charge $c=\frac{3n}{2+n}$, $n\in\ZZ_+$.
\end{enumerate}
\end{cor}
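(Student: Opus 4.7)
The plan is to go case by case through the five families listed, using Proposition \ref{7b} to restrict attention to these collapsing levels and then invoking the unitarity criteria already established in Section \ref{eihf} for the target vertex algebras. In each case the identification of $W_k(\g,\theta/2)$ with a Heisenberg or simple affine vertex algebra at the indicated level is the collapse statement, which is tabulated in \cite{AKMPP}; once one accepts that identification, unitarity reduces to a calculation on the affine (or Heisenberg) side.

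First I would record that in case (1) the collapse $W_{-1}(sl(m|n),\theta/2)\cong M(\C)$ means that the simple minimal $W$-algebra is the Heisenberg vertex algebra with a one-dimensional even space $\h=\C$. Subsection \ref{bosons} shows that $M(\h)$ is unitary precisely when $\h$ is purely even, and in the rank one case the form is the standard one on the Fock space; this gives central charge $c=1$. For cases (2)--(5), the collapse identifies $W_k(\g,\theta/2)$ with a simple affine vertex algebra $V_N(sl(2))$ for an explicit non-negative integer $N$: namely $N=1$ in cases (2) and (4), $N=(m-n-8)/2$ in case (3) (which is a non-negative integer since $m-n\ge 10$ is even), and $N=n\in\Z_+$ in case (5). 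The discussion at the end of Subsection \ref{Affine} shows that for a simple Lie algebra $\g$, the simple affine vertex algebra $V_k(\g)$ admits a positive definite invariant Hermitian form exactly when $k\in\Z_+$, using the compact involution $\overset{\circ}{\omega}_0$ and Theorem 11.7 of \cite{VB}; applying this to $\g=sl(2)$ at the listed non-negative integer levels yields the claimed unitarity.

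For the central charge formulas one uses the Sugawara value $c=\frac{N\dim\g}{N+h^\vee}=\frac{3N}{N+2}$ for $V_N(sl(2))$, which gives $c=1$ when $N=1$ (cases (2) and (4)), $c=\frac{3(m-n-8)}{m-n-4}$ when $N=\frac{m-n-8}{2}$ (case (3)), and $c=\frac{3n}{n+2}$ (case (5)). These values also match the central charge computed directly in $W^k(\g,\theta/2)$ from the formula $\frac{k\,\sdim\g}{k+h^\vee}-6k+h^\vee-4$ at the listed collapsing $k$, providing a consistency check.

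The main obstacle is not the unitarity verification itself, which is immediate from the two subsections cited, but rather the collapse identifications: one must know, for each entry of the list, that the simple quotient $W_k(\g,\theta/2)$ is genuinely isomorphic to the stated affine or Heisenberg vertex algebra and not merely contained in its affine part. This input is however precisely the content of the tables in \cite{AKMPP} (and \cite{AKMPP1}) for minimal $W$-algebras at collapsing levels, so the corollary is obtained by assembling these identifications with the elementary unitarity statements for Heisenberg and integrable affine vertex algebras proved in Section \ref{eihf}.
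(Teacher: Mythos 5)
Your proposal is correct and follows essentially the same route as the paper: the corollary is obtained directly from the proof of Proposition \ref{7b}, which likewise takes the collapse identifications from the tables of \cite{AKMPP}, \cite{AKMPP1} and then invokes the unitarity of the Heisenberg vertex algebra (Subsection \ref{bosons}) for case (1) and of $V_N(sl(2))$ at non-negative integral level (end of Subsection \ref{Affine}) for cases (2)--(5). Your added central-charge consistency check via the Sugawara formula is a harmless supplement not present in the paper's argument.
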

\begin{remark} Case (4)
 of Corollary \ref{coro}  is of special interest since $W_{k}(spo(2|3))$, tensored with one fermion, is the $N=3$ superconformal algebra.
The collapsing level corresponds to the central charge 
$1$ of the simple W-algebra,
isomorphic to $V_1(sl(2))$, hence to the central charge $c=3/2$ of the $N=3$
superconformal algebra, which is therefore unitary. This has been already 
observed in  \cite{SW}.
\end{remark}
\begin{remark} Another interesting case of Corollary \ref{coro}  is (5). Recall 
that $W_k(D(2,1; a)$, tensored with four fermions and one boson, is the big $N=4$
superconformal algebra \cite{KW1}. It follows from Corollary \ref{coro} that this algebra is unitary 
when $a=-\frac{1+n}{n+2}, n\in \mathbb Z_+$, the central charge being $-6a$.
\end{remark}
\section{Appendix: $\l$-brackets in minimal $W$--algebras}\label{6}

If $u\in\g_{-1/2}$ and $v\in \g_{1/2}$, then a direct computation shows that 
$$
[ u,v]=\sum_\a([ u,v]|a^\a)a_\a+\tfrac{([ u,v]|x)}{(x|x)}x=\sum_\a(a_\a|[ u,v])a^\a+\tfrac{(x|[ u,v])}{(x|x)}x,
$$
so
\begin{align*}
[u_\gamma,v]^\natural&=\sum_\a([u_\gamma,v]|a^\a)a_\a=\sum_\a(u_\gamma|[v,a^\a])a_\a,\\
[u,u^\gamma]^\natural&=\sum_\a(a_\a|[u,u^\gamma])a^\a=\sum_\a([a_\a,u]|u^\gamma)a^\a.
\end{align*}
Moreover,
$$
[[ u,u^{\gamma}],[u_{\gamma},v]]^{\natural}=\sum_{\a,\be}([a_\a,u]|u^\gamma)(u_\gamma|[v,a^\be])[a^\a,a_\be].
$$
Since, if $v\in\g_{-1/2}$, $v=\sum_\gamma(v|u^\gamma)[e_{-\theta},u_\gamma]$, we obtain
$$2[e_{\theta},v]=2\sum_\gamma(v|u^\gamma)[e_{\theta},[e_{-\theta},u_\gamma]]=2\sum_\gamma(v|u^\gamma)[x,u_\gamma]=\sum_\gamma(v|u^\gamma)u_\gamma.$$
Substituting we find
\begin{align*}
&\sum_\gamma([a_\a,u]|u^\gamma)(u_\gamma|[v,a^\be])=(\sum_\gamma ([a_\a,u]|u^\gamma)u_\gamma | [v,a^\be])\\
&=2([e_{\theta},[a_\a,u]]| [v,a^\be])=2\langle[a_\a,u],[v,a^\be]\rangle.
\end{align*}
Recall from \cite{AKMPP},  \cite{KMP} that
\begin{align}\label{GGsimplified}
&[{G^{\{u\}}}_{\lambda}G^{\{v\}}]=-2(k+h^\vee)\langle u,v\rangle L+\langle u,v\rangle\sum_{\alpha=1}^{\dim \g^\natural} 
:J^{\{a^\alpha\}}J^{\{a_\alpha\}}:+\\\notag
&\sum_{\gamma=1}^{\dim\g_{1/2}}:J^{\{[u,u^{\gamma}]^\natural\}}J^{\{[u_\gamma,v]^\natural\}}:
 +2(k+1) (\partial+ 2\lambda) J^{\{[[e_{\theta},u],v]^{\natural}\}}\\\notag
&+  \lambda \sum_{\gamma \in S_{1/2}}
  J^{\{ [[ u,u^{\gamma}],[u_{\gamma},v]]^{\natural}\}}
        + 2 p(k)\l^2\langle u,v\rangle,
\end{align}
 where $p(k)$ is a monic quadratic polynomial in $k$,
listed in \cite[Table 4]{AKMPP}. Using the above formulas we can rewrite \eqref{GGsimplified} as
\begin{align}\label{GGsimplifiedfurther}
&[{G^{\{u\}}}_{\lambda}G^{\{v\}}]=-2(k+h^\vee)\langle u,v\rangle L+\langle u,v\rangle\sum_{\alpha=1}^{\dim \g^\natural} 
:J^{\{a^\alpha\}}J^{\{a_\alpha\}}:+\\\notag
&2\sum_{\a,\be}\langle[a_\a,u],[v,a^\be]\rangle:J^{\{a^\a\}}J^{\{a_\be\}}:
 +2(k+1) (\partial+ 2\lambda) J^{\{[[e_{\theta},u],v]^{\natural}\}}\\\notag
&+ 2 \lambda \sum_{\a,\be}\langle[a_\a,u],[v,a^\be]\rangle
  J^{\{ [a^\a,a_\be]\}}
        + 2 p(k)\l^2\langle u,v\rangle.
\end{align}

\vskip20pt
    \footnotesize{

\noindent{\bf V.K.}: Department of Mathematics, MIT, 77
Mass. Ave, Cambridge, MA 02139;\newline
{\tt kac@math.mit.edu}
\vskip5pt
\noindent{\bf P.MF.}: Politecnico di Milano, Polo regionale di Como,
Via Anzani 4, 22100, Como, Italy;\newline {\tt pierluigi.moseneder@polimi.it}
\vskip5pt
\noindent{\bf P.P.}: Dipartimento di Matematica, Sapienza Universit\`a di Roma, P.le A. Moro 2,
00185, Roma, Italy;\newline {\tt papi@mat.uniroma1.it}
}

   \end{document}